\definecolor{deepjunglegreen}{rgb}{0.0, 0.29, 0.29}
\definecolor{darkspringgreen}{rgb}{0.09, 0.45, 0.27}
\newtheorem{thm}{Theorem}[subsection]
\newtheorem{lem}[thm]{Lemma}
\newtheorem{prop}[thm]{Proposition}
\theoremstyle{definition}
\newtheorem{defn}[thm]{Definition}
\theoremstyle{remark}
\newtheorem{rem}[thm]{Remark}
\newcommand{\nc}{\newcommand}
\nc{\renc}{\renewcommand} \nc{\ssec}{\subsection}
\nc{\sssec}{\subsubsection}
\renewcommand{\subsection}{\@startsection{subsection}{2}{0pt}{-3ex
plus -1ex minus -0.2ex}{-2mm plus -0pt minus
-2pt}{\normalfont\bfseries}} \makeatother
\numberwithin{equation}{subsection}
\nc{\on}{\operatorname} \nc{\wh}{\widehat}
\nc\ol{\overline} \nc\ul{\underline} \nc\wt{\widetilde}
\nc{\BA}{{\mathbb{A}}} \nc{\BC}{{\mathbb{C}}} \nc{\BF}{{\mathbb{F}}}
\nc{\BD}{{\mathbb{D}}} \nc{\BG}{{\mathbb{G}}} \nc{\BQ}{{\mathbb{Q}}}
\nc{\BM}{{\mathbb{M}}} \nc{\BN}{{\mathbb{N}}} \nc{\BO}{{\mathbb{O}}}
\nc{\BP}{{\mathbb{P}}} \nc{\BR}{{\mathbb{R}}} \nc{\BU}{{\mathbb{U}}} \nc{\BV}{{\mathbb{V}}}
\nc{\BZ}{{\mathbb{Z}}} \nc{\BS}{{\mathbb{S}}} \nc{\BW}{{\mathbb{W}}}
\nc{\CA}{{\mathcal{A}}} \nc{\CB}{{\mathcal{B}}} \nc{\CalD}{{\mathcal{D}}}
\nc{\CE}{{\mathcal{E}}} \nc{\CF}{{\mathcal{F}}}
\nc{\CG}{{\mathcal{G}}} \nc{\CH}{{\mathcal{H}}}
\nc{\CI}{{\mathcal{I}}} \nc{\CK}{{\mathcal{K}}} \nc{\CL}{{\mathcal{L}}}
\nc{\CM}{{\mathcal{M}}} \nc{\CN}{{\mathcal{N}}}
\nc{\CO}{{\mathcal{O}}} \nc{\CP}{{\mathcal{P}}}
\nc{\CQ}{{\mathcal{Q}}} \nc{\CR}{{\mathcal{R}}}
\nc{\CS}{{\mathcal{S}}} \nc{\CT}{{\mathcal{T}}}
\nc{\CU}{{\mathcal{U}}} \nc{\CV}{{\mathcal{V}}}  \nc{\CY}{{\mathcal Y}}
\nc{\CW}{{\mathcal{W}}} \nc{\CZ}{{\mathcal{Z}}}
\nc{\cM}{{\check{\mathcal M}}{}} \nc{\csM}{{\check{\mathcal A}}{}}
\nc{\ocM}{{\overset{\circ}{\mathcal M}}{}}
\nc{\obM}{{\overset{\circ}{\mathbf M}}{}}
\nc{\oCA}{{\overset{\circ}{\mathcal A}}{}}
\nc{\obA}{{\overset{\circ}{\mathbf A}}{}}
\nc{\ooM}{{\overset{\circ}{M}}{}}
\nc{\osM}{{\overset{\circ}{\mathsf M}}{}}
\nc{\vM}{{\overset{\bullet}{\mathcal M}}{}}
\nc{\nM}{{\underset{\bullet}{\mathcal M}}{}}
\nc{\oD}{{\overset{\circ}{\mathcal D}}{}}
\nc{\obD}{{\overset{\circ}{\mathbf D}}{}}
\nc{\oA}{{\overset{\circ}{\mathbb A}}{}}
\nc{\op}{{\overset{\bullet}{\mathbf p}}{}}
\nc{\cp}{{\overset{\circ}{\mathbf p}}{}}
\nc{\oU}{{\overset{\bullet}{\mathcal U}}{}}
\nc{\ofZ}{{\overset{\circ}{\mathfrak Z}}{}}
\nc{\ff}{{\mathfrak{f}}} \nc{\fv}{{\mathfrak{v}}}
\nc{\fa}{{\mathfrak{a}}} \nc{\fb}{{\mathfrak{b}}}
\nc{\fd}{{\mathfrak{d}}} \nc{\fe}{{\mathfrak{e}}}
\nc{\fg}{{\mathfrak{g}}} \nc{\fgl}{{\mathfrak{gl}}}
\nc{\fh}{{\mathfrak{h}}} \nc{\fri}{{\mathfrak{i}}}
\nc{\fj}{{\mathfrak{j}}} \nc{\fk}{{\mathfrak{k}}} \nc{\fl}{{\mathfrak{l}}}
\nc{\fm}{{\mathfrak{m}}} \nc{\fn}{{\mathfrak{n}}}
\nc{\ft}{{\mathfrak{t}}} \nc{\fu}{{\mathfrak{u}}}
\nc{\fw}{{\mathfrak{w}}} \nc{\fz}{{\mathfrak{z}}}
\nc{\fp}{{\mathfrak{p}}} \nc{\fq}{{\mathfrak{q}}} \nc{\frr}{{\mathfrak{r}}}
\nc{\fs}{{\mathfrak{s}}} \nc{\fsl}{{\mathfrak{sl}}}
\nc{\hsl}{{\widehat{\mathfrak{sl}}}}
\nc{\hgl}{{\widehat{\mathfrak{gl}}}}
\nc{\hg}{{\widehat{\mathfrak{g}}}}
\nc{\chg}{{\widehat{\mathfrak{g}}}{}^\vee}
\nc{\hn}{{\widehat{\mathfrak{n}}}}
\nc{\chn}{{\widehat{\mathfrak{n}}}{}^\vee}
\nc{\fA}{{\mathfrak{A}}} \nc{\fB}{{\mathfrak{B}}} \nc{\fC}{{\mathfrak{C}}}
\nc{\fD}{{\mathfrak{D}}} \nc{\fE}{{\mathfrak{E}}}
\nc{\fF}{{\mathfrak{F}}} \nc{\fG}{{\mathfrak{G}}} \nc{\fH}{{\mathfrak{H}}}
\nc{\fI}{{\mathfrak{I}}} \nc{\fJ}{{\mathfrak{J}}}
\nc{\fK}{{\mathfrak{K}}} \nc{\fL}{{\mathfrak{L}}}
\nc{\fM}{{\mathfrak{M}}} \nc{\fN}{{\mathfrak{N}}}
\nc{\fP}{{\mathfrak{P}}} \nc{\fQ}{{\mathfrak{Q}}}
\nc{\fS}{{\mathfrak{S}}} \nc{\fT}{{\mathfrak{T}}} \nc{\fU}{{\mathfrak{U}}}
\nc{\fV}{{\mathfrak{V}}} \nc{\fW}{{\mathfrak{W}}}
\nc{\fX}{{\mathfrak{X}}} \nc{\fY}{{\mathfrak{Y}}}
\nc{\fZ}{{\mathfrak{Z}}}
\nc{\ba}{{\mathbf{a}}}
\nc{\bb}{{\mathbf{b}}} \nc{\bc}{{\mathbf{c}}}
\nc{\be}{{\mathbf{e}}} \nc{\bj}{{\mathbf{j}}} \nc{\bm}{{\mathbf{m}}}
\nc{\bn}{{\mathbf{n}}} \nc{\bp}{{\mathbf{p}}}
\nc{\bq}{{\mathbf{q}}} \nc{\br}{{\mathbf{r}}} \nc{\bt}{{\mathbf{t}}}
\nc{\bfu}{{\mathbf{u}}} \nc{\bv}{{\mathbf{v}}}
\nc{\bx}{{\mathbf{x}}} \nc{\by}{{\mathbf{y}}} \nc{\bz}{{\mathbf{z}}}
\nc{\bw}{{\mathbf{w}}} \nc{\bA}{{\mathbf{A}}}
\nc{\bB}{{\mathbf{B}}} \nc{\bC}{{\mathbf{C}}}
\nc{\bD}{{\mathbf{D}}} \nc{\bF}{{\mathbf{F}}} \nc{\bG}{{\mathbf{G}}}
\nc{\bH}{{\mathbf{H}}} \nc{\bI}{{\mathbf{I}}} \nc{\bJ}{{\mathbf{J}}}
\nc{\bK}{{\mathbf{K}}} \nc{\bM}{{\mathbf{M}}} \nc{\bN}{{\mathbf{N}}}
\nc{\bO}{{\mathbf{O}}} \nc{\bS}{{\mathbf{S}}} \nc{\bT}{{\mathbf{T}}}
\nc{\bU}{{\mathbf{U}}} \nc{\bV}{{\mathbf{V}}} \nc{\bW}{{\mathbf{W}}}
\nc{\bX}{{\mathbf{X}}}
\nc{\bY}{{\mathbf{Y}}} \nc{\bP}{{\mathbf{P}}}
\nc{\bZ}{{\mathbf{Z}}} \nc{\bh}{{\mathbf{h}}}
\nc{\sA}{{\mathsf{A}}} \nc{\sB}{{\mathsf{B}}}
\nc{\sC}{{\mathsf{C}}} \nc{\sD}{{\mathsf{D}}}
\nc{\sE}{{\mathsf{E}}} \nc{\sF}{{\mathsf{F}}} \nc{\sG}{{\mathsf{G}}}
\nc{\sI}{{\mathsf{I}}} \nc{\sK}{{\mathsf{K}}} \nc{\sL}{{\mathsf{L}}}
\nc{\sfm}{{\mathsf{m}}} \nc{\sM}{{\mathsf{M}}} \nc{\sO}{{\mathsf{O}}}
\nc{\sQ}{{\mathsf{Q}}} \nc{\sP}{{\mathsf{P}}}
\nc{\sT}{{\mathsf{T}}} \nc{\sZ}{{\mathsf{Z}}}
\nc{\sV}{{\mathsf{V}}} \nc{\sW}{{\mathsf{W}}}
\nc{\sfp}{{\mathsf{p}}} \nc{\sq}{{\mathsf{q}}} \nc{\sr}{{\mathsf{r}}}
\nc{\st}{{\mathsf{t}}} \nc{\sfb}{{\mathsf{b}}}
\nc{\sfc}{{\mathsf{c}}} \nc{\sd}{{\mathsf{d}}}
\nc{\sy}{{\mathsf{y}}} \nc{\sz}{{\mathsf{z}}}
\nc{\tA}{{\widetilde{\mathbf{A}}}}
\nc{\tB}{{\widetilde{\mathcal{B}}}}
\nc{\tg}{{\widetilde{\mathfrak{g}}}} \nc{\tG}{{\widetilde{G}}}
\nc{\TM}{{\widetilde{\mathbb{M}}}{}}
\nc{\tO}{{\widetilde{\mathsf{O}}}{}}
\nc{\tU}{{\widetilde{\mathfrak{U}}}{}} \nc{\TZ}{{\tilde{Z}}}
\nc{\tx}{{\tilde{x}}} \nc{\tbv}{{\tilde{\bv}}}
\nc{\tfP}{{\widetilde{\mathfrak{P}}}{}} \nc{\tz}{{\tilde{\zeta}}}
\nc{\tmu}{{\tilde{\mu}}}
\nc{\urho}{\underline{\rho}} \nc{\uB}{\underline{B}}
\nc{\uC}{{\underline{\mathbb{C}}}} \nc{\ui}{\underline{i}}
\nc{\uj}{\underline{j}} \nc{\ofP}{{\overline{\mathfrak{P}}}}
\nc{\oB}{{\overline{\mathcal{B}}}}
\nc{\og}{{\overline{\mathfrak{g}}}} \nc{\oI}{{\overline{I}}}
\nc{\eps}{\varepsilon} \nc{\hrho}{{\hat{\rho}}} \nc{\bomega}{{\boldsymbol{\omega}}}
\nc{\blambda}{{\boldsymbol{\lambda}}} \nc{\bmu}{{\boldsymbol{\mu}}} \nc{\bnu}{{\boldsymbol{\nu}}}
\nc{\one}{{\mathbf{1}}} \nc{\two}{{\mathbf{t}}}
\nc{\Sym}{\mathop{\operatorname{\rm Sym}}}
\nc{\Tot}{{\mathop{\operatorname{\rm Tot}}}}
\nc{\Spec}{\mathop{\operatorname{\rm Spec}}}
\nc{\Ker}{{\mathop{\operatorname{\rm Ker}}}}
\nc{\Isom}{{\mathop{\operatorname{\rm Isom}}}}
\nc{\Hilb}{{\mathop{\operatorname{\rm Hilb}}}}
\nc{\deeq}{{\mathop{\operatorname{\rm deeq}}}}
\nc{\End}{{\mathop{\operatorname{\rm End}}}}
\nc{\Ext}{{\mathop{\operatorname{\rm Ext}}}}
\nc{\Hom}{{\mathop{\operatorname{\rm Hom}}}}
\nc{\CHom}{{\mathop{\operatorname{{\mathcal{H}}\it om}}}}
\nc{\GL}{{\mathop{\operatorname{\rm GL}}}}
\nc{\SL}{{\mathop{\operatorname{\rm SL}}}}
\nc{\gr}{{\mathop{\operatorname{\rm gr}}}}
\nc{\Id}{{\mathop{\operatorname{\rm Id}}}}
\nc{\perf}{{\mathop{\operatorname{\rm perf}}}}
\nc{\defi}{{\mathop{\operatorname{\rm def}}}}
\nc{\length}{{\mathop{\operatorname{\rm length}}}}
\nc{\supp}{{\mathop{\operatorname{\rm supp}}}}
\nc{\HC}{{\mathcal H}{\mathcal C}}
\nc{\pr}{{\operatorname{pr}}}
\nc{\Cliff}{{\mathsf{Cliff}}}
\nc{\loc}{{\operatorname{loc}}}
\nc{\Fl}{{\mathbf{Fl}}} \nc{\Ffl}{{\mathcal{F}\ell}}
\nc{\Fib}{{\mathsf{Fib}}}
\nc{\Coh}{{\mathsf{Coh}}} \nc{\FCoh}{{\mathsf{FCoh}}}
\nc{\Perf}{{\mathsf{Perf}}}
\nc{\reg}{{\text{\rm reg}}}
\nc{\gvee}{{\mathfrak g}^{\!\scriptscriptstyle\vee}}
\nc{\tvee}{{\mathfrak t}^{\!\scriptscriptstyle\vee}}
\nc{\nvee}{{\mathfrak n}^{\!\scriptscriptstyle\vee}}
\nc{\bvee}{{\mathfrak b}^{\!\scriptscriptstyle\vee}}
       \nc{\rhovee}{\rho^{\!\scriptscriptstyle\vee}}
\nc{\cplus}{{\mathbf{C}_+}} \nc{\cminus}{{\mathbf{C}_-}}
\nc{\cthree}{{\mathbf{C}_*}} \nc{\Qbar}{{\bar{Q}}}
\newcommand{\oM}{\vphantom{j^{X^2}}\smash{\overset{\circ}{\vphantom{\rule{0pt}{0.55em}}\smash{M}}}}
\newcommand{\oS}{\vphantom{j^{X^2}}\smash{\overset{\circ}{\vphantom{\rule{0pt}{0.55em}}\smash{S}}}}
\newcommand{\oZ}{\vphantom{j^{X^2}}\smash{\overset{\circ}{\vphantom{\rule{0pt}{0.55em}}\smash{Z}}}}
\newcommand{\uoZ}{\vphantom{j^{X^2}}\smash{\overset{\circ}{\vphantom{\rule{0pt}{0.55em}}\smash{\underline{Z}}}}}
\newcommand\iso{\mathbin{\vphantom{j^{X^2}}\smash{\overset{\sim}{\vphantom{\rule{0pt}{0.20em}}\smash{\longrightarrow}}}}}
\nc{\Gtimes}{\vphantom{j^{X^2}}\smash{\overset{G}{\vphantom{\rule{0pt}{0.30em}}\smash{\times}}}}
\nc{\sGtimes}{\vphantom{j^{X^2}}\smash{\overset{\mathsf G}{\vphantom{\rule{0pt}{0.30em}}\smash{\times}}}}
\nc{\svee}{{\!\scriptscriptstyle\vee}}
\nc{\bOmega}{{\overline{\Omega}}}
\nc{\seq}[1]{\stackrel{#1}{\sim}}
\nc{\aff}{{\operatorname{aff}}}
\nc{\der}{{\operatorname{der}}}
\nc{\fin}{{\operatorname{fin}}}
\nc{\mir}{{\operatorname{mir}}}
\nc{\rat}{{\operatorname{rat}}}
\nc{\triv}{{\operatorname{triv}}}
\nc{\ext}{{\operatorname{ext}}}
\nc{\righ}{{\operatorname{right}}}
\nc{\lef}{{\operatorname{left}}}
\nc{\forg}{{\operatorname{forg}}}
\nc{\fid}{{\operatorname{fd}}}
\nc{\modu}{{\operatorname{-mod}}}
\nc{\Gr}{{\operatorname{Gr}}}
\nc{\bGr}{{\mathbf{Gr}}}
\nc{\FT}{{\operatorname{FT}}}
\nc{\Mat}{{\operatorname{Mat}}}
\nc{\MSt}{{\operatorname{MSt}}}
\nc{\sph}{{\operatorname{sph}}}
\nc{\GR}{{\mathbf{Gr}}}
\nc{\Perv}{{\operatorname{Perv}}}
\nc{\Rep}{{\operatorname{Rep}}}
\nc{\Ind}{{\operatorname{Ind}}}
\nc{\IC}{{\operatorname{IC}}}
\nc{\Bun}{{\operatorname{Bun}}}
\nc{\Proj}{{\operatorname{Proj}}}
\nc{\Stab}{{\operatorname{Stab}}}
\nc{\pt}{{\operatorname{pt}}}
\nc{\tr}{{\operatorname{tr}}}
\nc{\tw}{{\operatorname{tw}}}
\nc{\elli}{{e\ell\ell}}
\nc{\bfmu}{{\boldsymbol{\mu}}}
\nc{\bfomega}{{\boldsymbol{\omega}}}
\nc{\tslash}{/\!\!/\!\!/}
\nc{\calD}{\mathcal D}
\nc\RHom{\operatorname{RHom}}
\nc\Res{\operatorname{Res}}
\nc\Av{\operatorname{Av}}
\newcommand*\circled[1]
\newcommand{\raisemath}[1]{\mathpalette{\raisem@th{#1}}}
\newcommand{\raisem@th}[3]{\raisebox{#1}{$#2#3$}}
\nc{\binlim}[2][]{\def\@tempa{#1}\@ifnextchar^{\@binlim{#2}}{\@binlim{#2}^{}}}
\def\@binlim#1^#2{\mathbin{\@ifempty{#2}{\mathop{#1}}{\mathop{#1}\@xp\displaylimits\@tempa^{#2}}}}
\def\arxiv#1{\href{http://arxiv.org/abs/#1}{\tt arXiv:#1}} \let\arXiv\arxiv
\newcommand{\dbkts}[1]{[\![#1]\!]}
\newcommand{\dprts}[1]{(\!(#1)\!)}
\nc\Gm{{\mathbb G_m}}
\nc\Fq{\mathbb F_q}
\nc\bGO{{\bG_\bO}}
\nc\opp{{\on{op}}}
\nc\tbx{\binlim{\widetilde\boxtimes{}}}
\newenvironment{i-ii-iii}{%
\begin{enumerate}
}%
{\end{enumerate}}
\nc\ceil[1]{\lceil#1\rceil}  \nc\floor[1]{\lfloor#1\rfloor}
\nc\Lie{\on{Lie}}
\begin{document}

\author[M.~Finkelberg]{Michael Finkelberg}
\address{Department of Mathematics,
National Research University Higher School of Economics,
Russian Federation, Usacheva st.\ 6, 119048, Moscow;
\newline Skolkovo Institue of Science and Technology;
\newline Institute for Information Transmission Problems of RAS}
\email{fnklberg@gmail.com}
\author[M.~Matviichuk]{Mykola Matviichuk}
\address{Department of Mathematics, McGill University, Burnside Hall, 805 Sherbrooke st.~W.,
  Montr\'eal, Qu\'ebec, H3A 2K6, Canada}
  \email{mykola.matviichuk@gmail.com}
\author[A.~Polishchuk]{Alexander Polishchuk}
\address{Department of Mathematics, University of Oregon, Eugene, OR 97403;
  \newline National Research University Higher School of Economics;
\newline Korea Institute for Advanced Study, Seoul, Korea}
\email{apolish@uoregon.edu}

\title{Elliptic zastava}

\dedicatory{To Tony Joseph on his 80th birthday, with admiration}

\begin{abstract}
  We study the elliptic zastava spaces, their versions (twisted, Coulomb, Mirkovi\'c local spaces,
  reduced) and relations with monowalls moduli spaces and Feigin-Odesskii moduli spaces of
  $G$-bundles with parabolic structure on an elliptic curve. 
\end{abstract}
\maketitle
\tableofcontents

\section{Introduction}

\subsection{Zastava spaces: general overview}
Let $G$ be an almost simple simply connected algebraic group over $\BC$. Let us also fix a pair
of opposite Borel subgroups $B$, $B_-$ whose intersection is a maximal torus $T$. To a smooth
projective complex curve $C$, one can associate the zastava moduli space $Z(C)$ (the definition
goes back to V.~Drinfeld, see e.g.~\cite{bfgm}). It is the moduli space of $G$-bundles on $C$
equipped with a generalized $B$-structure and a generically transversal $U_-$-structure (here $U_-$
stands for the unipotent radical of $B_-$). It is actually a scheme with infinitely many connected
components numbered by the degrees of $B$-bundles. It has numerous applications in geometric
representation theory and especially in the geometric Langlands program (see e.g.~\cite{g,bf}).

The zastava space $Z(C)$ is equipped with a morphism $\pi$ to the colored configuration space
$\on{Conf}_G(C)$ of $C$
(it keeps track of the points of $C$ where the $B$- and $U_-$-structures fail to be transversal),
and one of the key features of $Z(C)$ is its factorization structure over the configurations
(locality over $C$). It allows to define $Z(C)$ for arbitrary smooth complex curve; not necessarily
projective: $Z(C)$ is defined as the preimage $\pi^{-1}\on{Conf}_G(C)\subset Z(\ol{C})$ for a
smooth compactification $\ol{C}\supset C$.

A special role is played by three smooth curves carrying the structure of 1-dimensional complex
algebraic groups: the additive group $\BG_a$, the multiplicative group $\BG_m$, and an elliptic
curve $E$. The {\em open} zastava $\oZ(C)\subset Z(C)$ (given by the open condition that a
$B$-structure is genuine as opposed to generalized) for these three curves play a prominent role
in physics as various versions of the {\em monopole} moduli spaces.

More precisely, the additive (or rational) open zastava are isomorphic to the euclidean monopoles' moduli
spaces~\cite{j,j'}, while the multiplicative (or trigonometric) open zastava are expected to be related
to the periodic monopoles' moduli spaces~\cite{ck}, and elliptic open zastava are expected to be related
to the doubly periodic monopoles' (or monowalls') moduli spaces~\cite{cw}.
Yet more precisely, the open zastava spaces are equipped with a natural $T$-action and a map to
$C^{\on{rk}G}$ playing the role of the moment map. These allow to define a sort of (quasi)-Hamiltonian
reduction $\uoZ(C)$. The reduced zastava in additive case is isomorphic to the moduli space
of {\em centered} euclidean monopoles; in multiplicative (resp.\ elliptic) case, the reduced zastava
is expected to be isomorphic to the moduli space of periodic monopoles (resp.\ monowalls).
The monopole moduli spaces come equipped with a natural hyperk\"ahler structure, and the zastava
spaces carry the corresponding holomorphic symplectic structure that can be defined in modular terms
and explicitly computed in appropriate coordinates.

Furthermore, the euclidean monopole moduli spaces are known to be isomorphic to the Coulomb branches
of 3-dimensional $\CN=4$-supersymmetric quiver gauge theories (for the Dynkin quiver of $G$; with
symmetrizers if $G$ is not simply laced). See~\cite{bfn2} for a mathematically rigorous identification
of the Coulomb branch with $\oZ(\BG_a)$. Similarly, the K-theoretic Coulomb branch can be identified
with $\oZ(\BG_m)$, see~\cite{ft}. One of the main topics of the present paper is an identification of
$\uoZ(E)$ with an appropriate version of {\em elliptic} Coulomb branch (whose rigorous mathematical
definition is not formulated yet). From this point of view, the above holomorphic symplectic structures
on open zastava arise from the natural quantizations of the Coulomb branches. These quantizations
are, respectively the truncated shifted Yangians~\cite{bfn2}, the truncated shifted quantum affine
algebras~\cite{ft}, and supposedly related to the elliptic quantum groups.

Actually, the reduced elliptic open zastava $\uoZ(E)$ appeared in mathematics long ago in another
disguise in the works of B.~Feigin and A.~Odesskii. Namely, let us modify the definition of $\oZ(E)$,
replacing a $U_-$-structure by a $\CU_-^\CK$-structure, where $\CU_-^\CK$ is a unipotent group scheme
over $E$ obtained from $U_-$ via twisting by a regular $T$-torsor $\CK_T$. Then the resulting reduced
zastava $\uoZ{}_\CK(E)$ is isomorphic to the Feigin-Odesskii moduli space of complete flags in the
$G$-bundle $\on{Ind}_T^G\CK_T$ with a fixed isomorphism class of the associated graded bundle.
B.~Feigin and A.~Odesskii constructed a natural symplectic structure on their moduli spaces
(along with its quantization), and it turns out that this symplectic structure coincides with the
one of the previous paragraph.

In the remaining sections of Introduction we provide a more detailed overview of the above topics
along with some other aspects of our work, like {\em Mirkovi\'c local spaces} needed for identification
of various types of elliptic zastava.

\subsection{Rational zastava and euclidean monopoles}
\label{euclid}
We denote by $\CB$ the flag variety of $G$. Let $\Lambda$ denote the cocharacter lattice of $T$;
since $G$ is assumed to be simply connected, this is also the coroot lattice of $G$.
We denote by $\Lambda_{\on{pos}}\subset \Lambda$ the sub-semigroup spanned by positive coroots.

It is well-known that $H_2(\CB,\BZ)=\Lambda$ and that an element $\alpha\in H_2(\CB,\BZ)$ is
representable by an effective algebraic curve if and only if $\alpha\in \Lambda_{\on{pos}}$.
The (open) {\em zastava} $\oZ^{\alpha}$ is the moduli space of maps $C=\BP^1\to \CB$ of degree $\alpha$
sending $\infty\in \BP^1$ to $B_-\in \CB$. It is known~\cite{fkmm} that this is a smooth symplectic
affine algebraic variety, which can be identified with the hyperk\"ahler moduli space of
framed $G$-monopoles on $\BR^3$ with maximal symmetry breaking at infinity of charge
$\alpha$~\cite{j,j'}. Let us mention one more equivalent definition of $\oZ^{\alpha}$:
it is the moduli space of $G$-bundles on $\BP^1$ equipped with a $B$-structure of degree $\alpha$
and a $U_-$-structure transversal to the $B$-structure at $\infty\in\BP^1$.

The zastava space is equipped with a {\em factorization} morphism
$\pi^\alpha\colon \oZ^\alpha\to\BA^\alpha$ with
a simple geometric meaning: for a based map
$\varphi\in\oZ^\alpha$ the colored divisor $\pi^\alpha(\varphi)$ is just the
pullback of the colored Schubert divisor $D\subset\CB$ equal to the complement
of the open $B$-orbit in $\CB$. The morphism
$\pi^\alpha\colon \oZ^\alpha\to\BA^\alpha$ is the {\em Atiyah-Hitchin} integrable
system (with respect to the above symplectic structure): all the fibers of
$\pi^\alpha$ are Lagrangian.

\medskip

A system of \'etale birational coordinates on $\oZ^\alpha$ was introduced
in~\cite{fkmm}. Let us recall the definition
for $G=SL(2)$. In this case $\alpha$ is $a$ times the simple coroot, and $\oZ^a:=\oZ^\alpha$
consists of all maps $\BP^1\to \BP^1$ of degree $a$
which send $\infty$ to $0$. We can represent such a map by a rational function $\frac{R}{Q}$ where
$Q$ is a monic polynomial of degree $a$ and $R$ is a polynomial of degree $<a$. Let
$w_1,\ldots,w_a$ be the zeros of $Q$. Set $y_r=R(w_r)$.
Then the functions $(y_1,\ldots,y_a,w_1,\ldots, w_a)$ form a system of \'etale birational
coordinates on $\oZ^a$, and the above mentioned
symplectic form in these coordinates reads
$\Omega_\rat=\sum_{r=1}^a\frac{dy_r\wedge dw_r}{y_r}$.

For general $G$ the definition of the above coordinates is quite similar. In this case given a
point in $\oZ^\alpha$ we can define polynomials $R_i,Q_i$ where $i$
runs through the set $I$ of vertices of the Dynkin diagram of $G,\ \alpha=\sum a_i\alpha_i$, and

(1) $Q_i$ is a monic polynomial of degree $a_i$,

(2) $R_i$ is a polynomial of degree $<a_i$.

Hence, we can define (\'etale, birational) coordinates
$(y_{i,r},w_{i,r})$ where $i\in I$ and
$r=1,\ldots,a_i$. Namely, $w_{i,r}$ are the roots of $Q_i$, and
$y_{i,r}=R_i(w_{i,r})$. The Poisson brackets of these coordinates
with respect to the above symplectic form are as follows:
$\{w_{i,r},w_{j,s}\}_\rat=0,\
\{w_{i,r},y_{j,s}\}_\rat=d\,^\svee\!\!\!_i\delta_{ij}\delta_{rs}y_{j,s},\
\{y_{i,r},y_{j,s}\}_\rat=
(\alpha^\svee_i,\alpha^\svee_j)\frac{y_{i,r}y_{j,s}}{w_{i,r}-w_{j,s}}$ for
$i\ne j$, and finally $\{y_{i,r},y_{i,s}\}_\rat=0$. Here $\alpha^\svee_i$ is a
simple root, $(,)$ is the invariant scalar product on $(\on{Lie}T)^*$ such
that the square length of a short root is 2, and
$d\,^\svee\!\!\!_i=(\alpha^\svee_i,\alpha^\svee_i)/2$.

\medskip

Finally, let us mention that the zastava space $\oZ^\alpha$ is isomorphic to the Coulomb branch
of a $3d\ \CN=4$ supersymmetric quiver gauge theory (for a Dynkin quiver of $G$, with no
framing; with symmetrizers for a non simply laced $G$), see~\cite{bfn2,nw}.

\subsection{Trigonometric zastava and periodic monopoles}
We have an open subset $\BG_m^\alpha\subset\BA^\alpha$ (colored divisors not
meeting $0\in\BA^1$), and the {\em trigonometric zastava} is defined as the
open subvariety $^\dagger\!\oZ^\alpha:=(\pi^\alpha)^{-1}(\BG_m^\alpha)\subset\oZ^\alpha$.
It can be identified with a solution of a certain moduli problem on the 
irreducible nodal curve of arithmetic genus~1 obtained by gluing the points $0,\infty\in\BP^1$,
see~\cite{fkr}. From this point of view it acquires a natural symplectic structure with the
corresponding bracket $\{,\}_{\on{trig}}$. Note that $\{,\}_{\on{trig}}$ is {\em not} the
restriction of $\{,\}_\rat$ from $\oZ^\alpha$, but rather its trigonometric version.

For example, when $G=\SL(2)$ and $\alpha$ is $a$ times the simple coroot, the Atiyah-Hitchin
integrable system $\pi^a\colon\oZ^a\to\BA^{(a)}$ is nothing but the classical Toda lattice
for $\GL(a)$,
while its trigonometric version $\pi^a\colon{}^\dagger\!\oZ^a\to\BG_m^{(a)}$ can be identified
with the {\em relativistic} Toda lattice for $\GL(a)$, see~\cite[\S2]{ft}.

An explicit formula for $\{,\}_{\on{trig}}$ in $w,y$-coordinates is obtained in~\cite{fkr}.

\medskip

The composed morphism
\[^\dagger\!\oZ^\alpha\xrightarrow{\pi^\alpha}\BG_m^\alpha\xrightarrow{\prod}\BG_m^I\cong T\]
(recall that $I$ is the set of simple coroots of $G$) is the group valued moment map of the
Hamiltonian action
of $T$ on $^\dagger\!\oZ^\alpha$. The quotient of a level of this moment map by the action of $T$
is the {\em reduced trigonometric zastava} $^\dagger\!\uoZ{}^\alpha$: the (quasi-)Hamiltonian
reduction of $^\dagger\!\oZ^\alpha$.

It is likely that the reduced trigonometric zastava is isomorphic to the moduli space of
{\em periodic} monopoles (see e.g.~\cite{ck}) in one of its complex structures (it has a natural
hyperk\"ahler structure, and among the $S^2$-worth of the underlying complex structures we need
a generic one, in which this moduli space is an affine variety). The corresponding holomorphic
symplectic structure on the moduli space of periodic monopoles matches the reduction of
$\{,\}_{\on{trig}}$. Note an important difference with the rational case: the usual zastava was
isomorphic to the euclidean monopoles' moduli space, and its Hamiltonian reduction with respect
to the $T$-action was isomorphic to the {\em centered} monopole moduli space. In the periodic
case the monopoles come centered by definition.

Finally, the trigonometric zastava $^\dagger\!\oZ^\alpha$ is isomorphic to the $K$-theoretic
Coulomb branch of a $3d\ \CN=4$ supersymmetric quiver gauge theory (for a Dynkin quiver of $G$,
with no framing; with symmetrizers for a non simply laced $G$), see~\cite{ft} for the simply
laced case. The reduced trigonometric zastava $^\dagger\!\uoZ{}^\alpha$ is isomorphic to the
$K$-theoretic Coulomb branch where the gauge group must be taken as the product of $\SL(V_i)$
(as opposed to the product of $\GL(V_i)$ for the trigonometric zastava).

\subsection{Elliptic zastava}
The explicit formulas for $\{,\}_\rat$ and $\{,\}_{\on{trig}}$ look like rational and trigonometric
degenerations of the Feigin-Odesskii bracket~\cite{fo} on the moduli space of $G$-bundles with
a parabolic structure on an elliptic curve. The goal of the present paper is to give a precise
meaning to this observation.\footnote{This goal is achieved in~Theorem~\ref{myksas} where we
establish a symplectomorphism of the Feigin-Odesskii moduli space with a reduced elliptic zastava
space. Compare the formula at the end of~\S\ref{explicit FO} with the one at the end
of~\S\ref{euclid}.}

For a $T$-bundle $\CK_T$ on an elliptic curve $E$ we consider the moduli space
$\oZ^\alpha_\CK$ of the following data:

(a) a $G$-bundle $\CF_G$ on $E$,

(b) a $B$-structure $\varphi_+$ on $\CF_G$ such that the induced $T$-bundle $\CL_T=\Ind_B^T\varphi_+$
has degree $-\alpha$,

(c) a $\CU_-^\CK$-structure $\varphi_-$ on $\CF_G$ generically transversal to $\varphi_+$.
Here $\CU_-^\CK$ is a sheaf of unipotent groups locally isomorphic to $U_-$, obtained from the
trivial sheaf by twisting with $T$-bundle $\CK_T$ (we view $T$ as a subgroup of $\on{Aut}U_-$ via
the adjoint action).

The open elliptic zastava $\oZ^\alpha_\CK$ is a smooth connected variety of dimension $2|\alpha|$
equipped with an affine factorization morphism $\pi^\alpha\colon \oZ^\alpha_\CK\to E^\alpha$
to a configuration space of $E$. It has a relative compactification (compactified elliptic zastava)
\[\oZ^\alpha_\CK\subset\ol{Z}{}_\CK^\alpha\xrightarrow{\pi^\alpha}E^\alpha\]
where we allow both a $B$-structure and a $\CU_-^\CK$-structure to be generalized in the sense
of Drinfeld. There is also an intermediate version
$\oZ^\alpha_\CK\subset Z_\CK^\alpha\subset\ol{Z}{}_\CK^\alpha$ (elliptic zastava) where only a
$B$-structure is allowed to be generalized.

For example, when $G=\SL(2),\ \CK_T$ is trivial, and $\alpha$ is $a$ times the simple coroot, there is
an isomorphism $Z_{\CK_{\on{triv}}}^a\simeq TE^{(a)}$ with the total space of the tangent bundle of the
$a$-th symmetric power of $E$. Unfortunately, neither $TE^{(a)}$ nor its open subvariety
$\oZ_{\CK_{\on{triv}}}^a$ carry any natural Poisson structure.

\subsection{Coulomb elliptic zastava}
Similarly to the rational and trigonometric cases, one can consider the elliptic Coulomb branch
of a $3d\ \CN=4$ supersymmetric quiver gauge theory for a Dynkin quiver of $G$
with no framing. We restrict ourselves to the case of simply laced
$G$.\footnote{In the non simply laced
case one should use the approach of~\cite{nw} with symmetrizers.} The elliptic Coulomb
branch is the (relative) spectrum of the equivariant Borel-Moore elliptic homology of a certain
variety of triples. The theory of equivariant Borel-Moore elliptic homology is not developed yet;
it is to appear in a forthcoming work of I.~Perunov and A.~Prikhodko. We sketch some results
in~\S\ref{piat}. The resulting elliptic Coulomb branch is denoted $^C\!\oZ_{\CK_{\on{triv}}}^\alpha$.
It is equipped with a natural Poisson (in fact, symplectic) structure due to the existence of
quantized elliptic Coulomb branch.

For example, when $G=\SL(2)$, there is an isomorphism
$^C\!\oZ_{\CK_{\on{triv}}}^a\simeq\Hilb^a_{\on{tr}}(E\times\BG_m)$ with the {\em transversal Hilbert
  scheme} of the surface $E\times\BG_m$ (an open subvariety of the Hilbert scheme of points on
$E\times\BG_m$ classifying those subschemes whose projection to $E$ is a closed embedding).
Note that we have an open embedding $\Hilb^a_{\on{tr}}(E\times\BG_m)\subset T^*E^{(a)}$ into the
total space of the cotangent bundle of the $a$-th symmetric power of $E$.
Contrary to the rational and trigonometric cases, there is {\em no} isomorphism
$^C\!\oZ_{\CK_{\on{triv}}}^a\not\simeq\oZ_{\CK_{\on{triv}}}^a$ of the open elliptic zastava with the
elliptic Coulomb branch.

Still, the elliptic Coulomb branch is not so much different from the elliptic zastava.
Namely, they can be both obtained by the Mirkovi\'c construction of {\em local spaces} over
(the configuration spaces of) $E$, see e.g.~\cite[\S2]{myz}. This construction depends on a choice
of a {\em local line bundle}; one choice gives rise to the elliptic zastava; another gives rise
to the elliptic Coulomb branch, see~\S\ref{tri}. Moreover, this way we can define the Coulomb
elliptic zastava $^C\!\oZ_\CK^\alpha$ depending on an arbitrary $T$-bundle $\CK_T$, not necessarily
trivial.

\subsection{Feigin-Odesskii moduli space}
Another closely related moduli space $M(\CF_G,\CL_T)$ depending on a choice of a $G$-bundle
$\CF_G$ and a $T$-bundle $\CL_T$ on $E$ classifies the $B$-structures $\varphi$ on $\CF_G$
equipped with an isomorphism $\Ind_B^T\varphi\iso\CL_T$. It can be equipped with a natural
structure of a derived stack with a (0-shifted) symplectic form, see~\S\ref{shest}.
B.~Feigin and A.~Odesskii construct in~\cite{fo} a Poisson structure on the moduli space
$\on{Bun}_P$ of $P$-bundles on $E$ (where $P$ is a parabolic subgroup of $G$). The above moduli
spaces $M(\CF_G,\CL_T)$ coincide with certain symplectic leaves of $\on{Bun}_B$. For instance,
if $G=\on{SL}(2)$, then $M(\CF_G,\CL_T)$ is the moduli space of extensions of a line bundle
$\CL^{-1}$ by $\CL$ with a fixed isomorphism class of the resulting rank~2 bundle $\CV_\CF$. If 
$\CV_\CF$ is assumed to be stable, then $M(\CF_G,\CL_T)$ is a symplectic leaf of the
Feigin-Odesskii bracket on $\on{Bun}_B$.

If we fix a {\em regular} $T$-bundle $\CK_T$ (this means that all the line bundles associated to
the roots of $G$ are {\em nontrivial}), take $\CF_G=\Ind_T^G\CK_T$ and $\deg\CL_T=-\alpha$, then
$M(\CF_G,\CL_T)$ can be identified with a certain ``quasi-Hamiltonian'' reduction
$^{\vphantom{\alpha}}_\CalD\uoZ^\alpha_\CK$ of $\oZ^\alpha_\CK$. Namely, the reduction is defined as
the quotient with respect to the natural $T$-action of a fiber over $\CalD\in E^I$
of the composed morphism
\[\oZ^\alpha_\CK\xrightarrow{\pi^\alpha}E^\alpha\xrightarrow{\sum}E^I\]
(recall that $I$ is the set of simple coroots of $G$).

By the very construction, the Coulomb elliptic zastava $^C\!\oZ_\CK^\alpha$ is also equipped
with the factorization morphism $\pi^\alpha\colon ^C\!\oZ_\CK^\alpha\to E^\alpha$, and so we can
define the reduced Coulomb elliptic zastava $^C_\CalD\uoZ^\alpha_\CK$ in a similar way.
The important difference with the usual elliptic zastava is that the Coulomb elliptic zastava
$^C\!\oZ_\CK^\alpha$ carries a symplectic form, and the above reduction is really a
(quasi-)Hamiltonian reduction. In particular, the reduced Coulomb elliptic zastava
$^C_\CalD\uoZ^\alpha_\CK$ inherits a symplectic form.

The two main results of the present paper are as follows:

A. The reduced elliptic zastava and reduced Coulomb elliptic zastava are isomorphic:
$^{\vphantom{\alpha}}_\CalD\uoZ^\alpha_\CK\simeq{}^C_\CalD\uoZ^\alpha_{\CK'}$ for an appropriate choice
of a $T$-bundle $\CK'_T$ depending on $\CK_T$ and on the level $\CalD$ of the ``moment map''
(Theorem~\ref{reductio}).

B. If $\CK_T$ is regular, the composed isomorphism
$M(\Ind_T^G\CK_T,\CL_T)\simeq{}^{\vphantom{\alpha}}_\CalD\uoZ^\alpha_\CK\simeq{}^C_\CalD\uoZ^\alpha_{\CK'}$
is a symplectomorphism (Theorem~\ref{myksas}).

\medskip

It is also likely that the reduced elliptic zastava $^{\vphantom{\alpha}}_\CalD\uoZ^\alpha_\CK$ is isomorphic
to the moduli space of {\em monowalls} (doubly periodic monopoles)~\cite{cw}.
The situation is similar to the case of periodic monopoles: the monowalls come centered
by definition. In the corresponding elliptic Coulomb branch of a quiver
gauge theory the gauge group must be taken as the product of $\SL(V_i)$ (as opposed to the
product of $\GL(V_i)$ for the nonreduced Coulomb elliptic zastava).

\subsection{An explicit formula for the Feigin-Odesskii Poisson bracket}
\label{explicit FO}
We are finally in a position to address the problem of explicit computation of the
Feigin-Odesskii Poisson bracket. The Coulomb elliptic zastava $^C\!\oZ_\CK^\alpha$ comes equipped
with \'etale rational coordinates that are ``trigonometric Darboux'' for its symplectic form by
the very construction. The usual elliptic zastava also carry \'etale rational coordinates
$(y_{i,r},w_{i,r})_{i\in I}^{1\leq r\leq a_i}$ similar to the ones in~\S\ref{euclid} (but now
$w_{i,r}$ is a point of $E$). The reduced elliptic zastava (alias the Feigin-Odesskii moduli
space in the regular case) inherits these coordinates with the following caveats:

(a) The $w$-coordinates are constrained: for each $i\in I$ the sum $\sum_{r=1}^{a_i}w_{i,r}\in E$
is fixed;

(b) The $y$-coordinates are homogeneous: only the ratios $\frac{y_{i,r}}{y_{i,r'}}$ are well
defined for $i\in I,\ 1\leq r,r'\leq a_i$.

Then the only nontrivial Poisson brackets arising from the Feigin-Odesskii symplectic form are
as follows:

\begin{multline*}
  \Big\{\frac{y_{i,r}}{y_{i,r'}},w_{i,r}\Big\}_{FO}=\frac{y_{i,r}}{y_{i,r'}},\ \hfil
  \Big\{\frac{y_{i,r}}{y_{i,r'}},w_{i,r'}\Big\}_{FO}=-\frac{y_{i,r}}{y_{i,r'}},\ \hfil
  \Big\{\frac{y_{i,r'}}{y_{i,p'}},\frac{y_{j,r}}{y_{j,p}}\Big\}_{FO}=\\
\frac{y_{i,r'}}{y_{i,p'}}\cdot\frac{y_{j,r}}{y_{j,p}}
\big(\zeta(w_{i,r'}-w_{j,r})-\zeta(w_{i,r'}-w_{j,p})-\zeta(w_{i,p'}-w_{j,r})+\zeta(w_{i,p'}-w_{j,p})\big)
\end{multline*}
in case $i\ne j$ are joined by an edge in the Dynkin diagram of $G$, and zero otherwise
(recall that we assume $G$ simply laced). Here $\zeta(w)$ is the Weierstra\ss\ zeta function.

\subsection{Acknowledgments}
This project was initiated together with A.~Kuznetsov in~2000 right after~\cite{fo,fkmm}
appeared. It is clear from the above discussion that too many important concepts were
missing at the time, so the project had to wait for~20 years till its completion.
We are also grateful to A.~Braverman, S.~Cherkis, P.~Etingof, B.~Feigin,
I.~Mirkovi\'c, H.~Nakajima, I.~Perunov, A.~Prikhodko, E.~Rains, L.~Rybnikov, A.~Tsymbaliuk and
V.~Vologodsky for very helpful discussions. The work of M.F.\ and A.P.\ has been funded within the
framework of the HSE University Basic Research Program and the Russian Academic Excellence
Project `5-100'. A.P.\ is also partially supported by the NSF grant DMS-2001224.

\section{Elliptic zastava}
\label{dva}

\subsection{A group $G$}
\label{group}
Let $G$ be an almost simple simply connected algebraic group over $\BC$. We fix a pair of opposite
Borel subgroups $B$, $B_-$ whose intersection is a maximal torus $T$. The unipotent radical
of $B$ (resp.\ $B_-$) is denoted $U$ (resp.\ $U_-$).
Let $\Lambda$ (resp.\ $\Lambda^\vee$) denote the cocharacter (resp.\ character)
lattice of $T$; since $G$ is assumed to be simply connected, this is also the coroot lattice of $G$.
We denote by $\Lambda_{\on{pos}}\subset \Lambda$ the sub-semigroup spanned by positive coroots. We say that
$\alpha\geq \beta$ (for $\alpha,\beta\in \Lambda$)
if $\alpha-\beta\in\Lambda_{\on{pos}}$. The simple coroots are $\{\alpha_i\}_{i\in I}$;
the simple roots are $\{\alpha^\svee_i\}_{i\in I}$;
the fundamental weights are $\{\omega^\svee_i\}_{i\in I}$.
An irreducible $G$-module with a dominant highest weight $\lambda^\svee\in\Lambda^{\vee+}$
is denoted $V_{\lambda^\svee}$; we fix its highest vector $v_{\lambda^\svee}$.
For a weight $\mu^\svee\in\Lambda^\vee$ the
$\mu^\svee$-weight subspace of a $G$-module $V$ is denoted $V(\mu^\svee)$.

\subsection{Elliptic zastava}
We recall some results of~\cite{g} about various versions of zastava on a curve.
From now on we always consider an elliptic curve $E$.
We fix a degree zero $T$-torsor $\CK_T$ on $E$.
It gives rise to a collection of line bundles $\CK^{\mu^\svee}$ on $E$ associated to
characters $\mu^\svee\colon T\to\BC^\times$.

\begin{defn}
  \label{ell zas}
  \textup{(1)} Given $\alpha\in\Lambda_{\on{pos}}$, we define the {\em compactified elliptic zastava}
  $\ol{Z}{}_\CK^\alpha$ as the moduli space of the following data:

  \textup{(a)} a $G$-bundle $\CF_G$ on $E$;

  \textup{(b)} a $T$-bundle $\CL_T$ of degree $-\alpha$ on $E$;

  \textup{(c)} for any dominant weight $\lambda^\svee\in\Lambda^{\vee+}$, a nonzero morphism from
  the associated vector bundle
  $\xi^{\lambda^\svee}\colon\CV_\CF^{\lambda^\svee}\to\CK^{\lambda^\svee}$;

  \textup{(d)} for any $\lambda^\svee\in\Lambda^{\vee+}$, a sheaf embedding
  $\eta^{\lambda^\svee}\colon \CL^{\lambda^\svee}\hookrightarrow\CV_\CF^{\lambda^\svee}$,

  \medskip
  
  \noindent subject to the following conditions:

  \medskip
  
  \textup{(i)} the collection of sheaf embeddings
  $\CL^{\lambda^\svee}\hookrightarrow\CV_\CF^{\lambda^\svee}$ satisfy the Pl\"ucker relations,
  i.e.\ define a degree $\alpha$ generalized $B$-structure in $\CF_G$;

  \textup{(ii)} the collection of morphisms
  $\CV_\CF^{\lambda^\svee}\to\CK^{\lambda^\svee}$ satisfy the Pl\"ucker relations,
  i.e.\ define a generalized $\CK$-twisted $U_-$-structure in $\CF_G$;

  \textup{(iii)} the composition
  $\CL^{\lambda^\svee}\hookrightarrow\CV_\CF^{\lambda^\svee}\twoheadrightarrow\CK^{\lambda^\svee}$
  is not zero for any $\lambda^\svee$, i.e.\ the above generalized $B$- and $U_-$-structures are
  generically transversal.

  \medskip

  \textup{(2)} The {\em elliptic zastava} $Z^\alpha_\CK\subset\ol{Z}{}^\alpha_\CK$ is an open
  subspace given by the extra condition that the morphisms
  $\xi^{\lambda^\svee}\colon\CV_\CF^{\lambda^\svee}\to\CK^{\lambda^\svee}$ are surjective,
  i.e.\ the corresponding twisted $U_-$-structure is genuine, not generalized.

  \medskip
  
  \textup{(3)} The {\em open elliptic zastava} $\oZ^\alpha_\CK\subset Z^\alpha_\CK$ is given by the
  extra condition that the embeddings
  $\eta^{\lambda^\svee}\colon\CL^{\lambda^\svee}\hookrightarrow\CV_\CF^{\lambda^\svee}$
  are embeddings of vector bundles, i.e.\ $\CL^{\lambda^\svee}$ is a line subbundle in
  $\CV_\CF^{\lambda^\svee}$ for any $\lambda^\svee\in\Lambda^{\vee+}$. In other words, the
  corresponding $B$-structure is genuine, not generalized.

  \medskip
  
  \textup{(4)} The {\em factorization} morphism $\pi^\alpha\colon \ol{Z}{}^\alpha_\CK\to E^\alpha$
  associates to the data of zastava the $I$-colored divisor $D\in E^\alpha$ such that for any
  $\lambda^\svee\in\Lambda^{\vee+}$, the zero divisor of the composition
  $\CL^{\lambda^\svee}\to\CV_\CF^{\lambda^\svee}\to\CK^{\lambda^\svee}$ equals $\langle D,\lambda^\svee\rangle$.

  \medskip

  \textup{(5)} The Cartan torus $T$ acts on $\ol{Z}{}^\alpha_\CK$ by rescaling the morphisms in
  (c) above: for $t\in T$ we set $t(\xi^{\lambda^\svee}):=\lambda^\svee(t)\cdot\xi^{\lambda^\svee}$.
This action factors through the adjoint quotient $T^{\on{ad}}$.
\end{defn}

\begin{rem}
  \label{reduced zastava}
The moduli stack $\ol{Z}{}^\alpha_\CK$ is actually a finite type scheme, irreducible of dimension
$2|\alpha|$, see e.g.~\cite[\S4, \S7.2]{g}. The open subscheme
$\oZ^\alpha_\CK\subset\ol{Z}{}^\alpha_\CK$ is smooth. The scheme $\ol{Z}{}^\alpha_\CK$
can be nonreduced in general,
cf.~\cite[Example~2.13]{fem} for $G=\SL(5)$. This example features a formal arc scheme, but
according to the Grinberg-Kazhdan theorem and~\cite[\S4.4]{d} it implies that an appropriate
(rational) zastava space $Z^\alpha$ for $G=\SL(5)$ is nonreduced as well. Finally, the rational
zastava $Z^\alpha$ and the elliptic zastava $Z^\alpha_\CK$ are isomorphic locally in the \'etale
topology.

In~\S\ref{tri} we will consider the variety $(\ol{Z}{}^\alpha_\CK)_{\on{red}}$ equipped with the
reduced scheme structure.
\end{rem}

\begin{rem}
  \label{reductive}
  In~\S\ref{shest} we will need elliptic zastava for a {\em reductive} group $\sG$. It is defined
  similarly to~Definition~\ref{ell zas} making use of the trick~\cite[\S7]{sch} with the help
  of a central extension $1\to{\mathcal Z}\to\widehat{\sG}\to\sG\to1$ such that $\mathcal Z$ is
  a (connected) central torus in $\widehat{\sG}$, and the derived subgroup
  $[\widehat{\sG},\widehat{\sG}]\subset\widehat{\sG}$ is simply connected. Namely, we
  apply~Definition~\ref{ell zas} to $\widehat{\sG}$ instead of $\sG$ itself. The result is
  independent of the choice of $\widehat{\sG}$ and gets rid of some undesirable irreducible
  components that appear if we naively apply~Definition~\ref{ell zas} to $\sG$ itself.  
\end{rem}

The following definition is motivated by the notion of {\em centered} euclidean monopoles.

\begin{defn}
  \label{red zas}
  We have the Abel-Jacobi morphisms $E^{(a_i)}\to\on{Pic}^{a_i}E$ and their product
$\on{AJ}\colon E^\alpha\to\prod_{i\in I}\on{Pic}^{a_i}E$. We denote the composed morphism by
  \[\on{AJ}_Z\colon\oZ^\alpha_\CK\stackrel{\pi^\alpha}{\longrightarrow}
  E^\alpha\stackrel{\on{AJ}}{\longrightarrow}\prod_{i\in I}\on{Pic}^{a_i}E.\]
Given a collection $\CalD=(\CalD_i)_{i\in I}\in\on{Pic}^{a_i}E$, we define the
{\em reduced open elliptic zastava}
$^{\vphantom{\alpha}}_\CalD\uoZ^\alpha_\CK$ as $\on{AJ}_Z^{-1}(\CalD)/T$ (stack quotient).
\end{defn}

The reduced open elliptic zastava $^{\vphantom{\alpha}}_\CalD\uoZ^\alpha_\CK$ is an irreducible
stack.\footnote{Indeed, a general fiber of $\pi^\alpha$ is isomorphic to $\BG_m^{|\alpha|}$, hence
  irreducible. Any fiber of $\on{AJ}$ is irreducible as well. Finally, all the fibers of $\on{AJ}_Z$
  are smooth equidimensional by a computation of the differential of $\on{AJ}_Z$. Hence any fiber
  of $\on{AJ}_Z$ is irreducible.}
Let $\alpha=\sum_{i\in I}a_i\alpha_i$. If $a_i=0$ for some $i\in I$, then all the zastava
spaces $\ol{Z}{}^\alpha_\CK,\ Z^\alpha_\CK,\ \oZ^\alpha_\CK,\ ^{\vphantom{\alpha}}_\CalD\uoZ^\alpha_\CK$ coincide
with the corresponding zastava spaces for the derived group of the corresponding Levi factor
of $G$. If $a_i>0$ for all $i\in I$, then the action of $T^{\on{ad}}$ on the open elliptic zastava
$\oZ^\alpha_\CK$ is effective, and the dimension of $^{\vphantom{\alpha}}_\CalD\uoZ^\alpha_\CK$ is
$2|\alpha|-2\on{rk}G$.

\begin{rem}
  \label{triv can}
  Throughout the paper we will use a trivialization of the canonical line bundle
  $\bomega_E$. We fix this trivialization once and for all.
\end{rem}

\subsection{Example of $G=\SL(2)$ and Hilbert schemes}
\label{sl2 vs hilb}
We denote by $\omega^\svee$ the fundamental weight of $G=\SL(2)$, and we denote by
$\alpha^\svee=2\omega^\svee$ the simple root of $G$. We denote by $\alpha$ the simple coroot
of $G$. We denote the total space of the line bundle $\CK^{-\alpha^\svee}$ over $E$ by
$S_{\CK^{-\alpha^\svee}}$, and we denote the complement to the zero section by $\oS_{\CK^{-\alpha^\svee}}$.
These are algebraic surfaces equipped with a projection to $E$.
For $a\in\BN$, we denote $\ol{Z}{}^{a\alpha}_\CK$ simply by $\ol{Z}{}^a_\CK$.
We denote by $\Hilb^a(S_{\CK^{-\alpha^\svee}})\supset\Hilb^a(\oS_{\CK^{-\alpha^\svee}})$
the degree $a$ Hilbert schemes of points on the surfaces
$S_{\CK^{-\alpha^\svee}}\supset \oS_{\CK^{-\alpha^\svee}}$. We denote by
$\Hilb^a_\tr(S_{\CK^{-\alpha^\svee}})\subset\Hilb^a(S_{\CK^{-\alpha^\svee}})$
(resp.\ $\Hilb^a_\tr(\oS_{\CK^{-\alpha^\svee}})\subset\Hilb^a(\oS_{\CK^{-\alpha^\svee}})$)
the open {\em transversal Hilbert subscheme}
classifying all quotients of $\CO_{S_{\CK^{-\alpha^\svee}}}$ (resp.\ of $\CO_{\oS_{\CK^{-\alpha^\svee}}}$) whose
direct images to $E$ are also cyclic, i.e.\ are quotients of $\CO_E$.

Thus we have projections
$\Hilb^a_\tr(\oS_{\CK^{-\alpha^\svee}})\to\Hilb^a(E)=E^{(a)}\leftarrow\Hilb^a_\tr(S_{\CK^{-\alpha^\svee}})$.
The transversal Hilbert scheme $\Hilb^a_\tr(S_{\CK^{-\alpha^\svee}})$ is canonically isomorphic to the total
space of the following vector bundle $\CU_\CK$ on $E^{(a)}$. Let
  $\bq\colon E\times E^{(a-1)}\to E^{(a)}$ be the addition morphism (aka the universal
family over $\Hilb^a(E)=E^{(a)}$). Then $\CU_\CK:=\bq_*\pr_E^*\CK^{-\alpha^\svee}$.
We will also need another closely related vector bundle on $E^{(a)}$. Namely, let
$\Delta^{1,a-1}\subset E\times E^{(a-1)}$ be the incidence divisor (note that the line bundle
$\CO(\Delta^{1,a-1})$ on $E\times E^{(a-1)}$ is isomorphic to the normal bundle
to the closed embedding $E\times E^{(a-1)}\hookrightarrow E\times E^{(a)},\ (x,D')\mapsto(x,x+D')$,
see e.g.~\cite[Proposition 19.1]{p}).
We set $\CT_\CK:=\bq_*(\pr_E^*\CK^{-\alpha^\svee}\otimes\CO(\Delta^{1,a-1}))$.
Note that in case $\CK$ is trivial,
the corresponding vector bundle $\CT$ is nothing but the tangent bundle of $E^{(a)}$, and
the corresponding vector bundle $\CU$ is dual to $\CT$, i.e.\ $\CU\simeq\CT^*$ is the
cotangent bundle of $E^{(a)}$.

Furthermore, we have the Abel-Jacobi morphism $E^{(a)}\to\on{Pic}^a(E)$. For an arbitrary
line bundle $\CK'$ on $E$, we denote the composed
morphism by $\on{AJ}\colon\Hilb^a_\tr(\oS_{\CK'})\to E^{(a)}\to\on{Pic}^a(E)$.
For a line bundle $\CalD$ of degree $a$ on $E$,
the fiberwise dilation action of $\BC^\times$ on $\oS_{\CK'}$ induces an
action of $\BC^\times$ on $\on{AJ}^{-1}(\CalD)\subset
\Hilb^a_\tr(\oS_{\CK'})$,
and we define the {\em reduced transversal Hilbert scheme}
$^{\vphantom{\alpha}}_\CalD\ul\Hilb{}_\tr^a(\oS_{\CK'})$ as
$\on{AJ}^{-1}(\CalD)/\BC^\times$ (stack quotient).

\begin{prop}
  \label{hilb vs zas}
  \textup{(a)} There are natural isomorphisms \[\oZ^1_\CK\cong \oS_{\CK^{-\alpha^\svee}},\
  Z^1_\CK\cong S_{\CK^{-\alpha^\svee}},\ \ol{Z}{}^1_\CK\cong\BP(\CK^{-\alpha^\svee}\oplus\CO_E).\]

  \textup{(b)} For $a\in\BN$, the zastava space $Z^{a}_\CK$ is naturally isomorphic to the
  total space of the vector bundle $\CT_\CK$ on $E^{(a)}$.

  \textup{(c)} For $a\in\BN,\ \CalD\in\on{Pic}^a(E)$, the reduced open zastava
  $^{\vphantom{\alpha}}_\CalD\uoZ^{a}_\CK$ is naturally isomorphic to the reduced transversal
  Hilbert scheme $^{\vphantom{\alpha}}_\CalD\ul\Hilb{}_\tr^a(\oS_{\CK'})$ for
  $\CK'=\CK^{-\alpha^\svee}\otimes\CalD$.
\end{prop}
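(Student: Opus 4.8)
The plan is to treat the three statements in increasing order of generality, reducing (b) and (c) to (a) via the factorization structure and the Abel-Jacobi quotient construction. For part (a), I would unwind Definition \ref{ell zas} in the case $G = \SL(2)$, $\alpha$ the simple coroot (so $a=1$): a point of $\ol{Z}{}^1_\CK$ is a rank-$2$ bundle $\CV_\CF$ (the associated bundle for the standard representation $V_{\omega^\svee}$), together with a line subsheaf $\eta\colon \CL^{\omega^\svee}\hookrightarrow\CV_\CF$ of degree $-1$ (the generalized $B$-structure), a surjection-up-to-torsion $\xi\colon\CV_\CF\to\CK^{\omega^\svee}$ (the $\CK$-twisted $U_-$-structure), subject to the transversality condition that $\xi\circ\eta\neq 0$. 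Twisting, one may normalize $\CL^{\omega^\svee}=\CK^{-\omega^\svee}$, and then the data amounts to an extension $0\to\CK^{-\omega^\svee}\to\CV_\CF\to\CK^{\omega^\svee}\to 0$, classified by $\Ext^1(\CK^{\omega^\svee},\CK^{-\omega^\svee})=H^1(E,\CK^{-\alpha^\svee})$, plus the splitting-type data; but the cleanest route is to observe directly that the transversality condition makes $\xi\circ\eta$ a nonzero section of $\CK^{-\alpha^\svee}$, i.e.\ (after the fixed trivialization of $\bomega_E$, Remark \ref{triv can}) a point of the surface $S_{\CK^{-\alpha^\svee}}$ lying over its zero divisor in $E$. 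The open condition $\eta$ a subbundle corresponds to this section being nowhere zero only at the relevant fiber — more precisely it cuts out $\oS_{\CK^{-\alpha^\svee}}$ versus allowing the zero section — and the fully generalized version $\ol{Z}{}^1_\CK$ compactifies the $\BG_m$-fiber to $\BP^1$, giving $\BP(\CK^{-\alpha^\svee}\oplus\CO_E)$. I would make this into an actual morphism of functors in both directions and check it is an isomorphism fiberwise over $E$.

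For part (b), the strategy is to use the factorization morphism $\pi^a\colon Z^a_\CK\to E^{(a)}$ and identify $Z^a_\CK$ with a vector bundle over $E^{(a)}$ by describing its fiber over a divisor $D=x_1+\cdots+x_a$. Over a point of $E^{(a)}$ with distinct points, factorization (from \cite{g}) reduces the fiber to the product of the fibers of $Z^1_\CK\cong S_{\CK^{-\alpha^\svee}}$ over the $x_r$, i.e.\ $\bigoplus_r (\CK^{-\alpha^\svee})_{x_r}$; globally over $E\times E^{(a-1)}$ this assembles, via the addition map $\bq$, into $\bq_*\pr_E^*\CK^{-\alpha^\svee}$ — but the correct twist is by $\CO(\Delta^{1,a-1})$, reflecting that in the genuine-$U_-$-structure locus $Z^a$ the section of $\CK^{-\alpha^\svee}$ is allowed a first-order pole along the diagonal (equivalently, the relative tangent directions to $E^{(a)}$ get absorbed), exactly as in the trivial-$\CK$ case where $\CT$ is the tangent bundle of $E^{(a)}$. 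So I would show $Z^a_\CK\cong\operatorname{Tot}(\CT_\CK)$ by constructing the map, checking it over the open locus of reduced divisors, and extending by normality/flatness of both sides over $E^{(a)}$ (using that $Z^a_\CK$ is a scheme, Remark \ref{reduced zastava}, and that $\pi^a$ is affine and the total space of $\CT_\CK$ is smooth over $E^{(a)}$). The transversal-Hilbert-scheme description then follows because $\Hilb^a_\tr(S_{\CK^{-\alpha^\svee}})$ is \emph{defined} to be $\operatorname{Tot}(\bq_*\pr_E^*\CK^{-\alpha^\svee})=\operatorname{Tot}(\CU_\CK)$ — wait: here I must be careful, since the text says $\Hilb^a_\tr(S_{\CK^{-\alpha^\svee}})\cong\operatorname{Tot}(\CU_\CK)$ while $Z^a_\CK\cong\operatorname{Tot}(\CT_\CK)$, and these differ by the $\Delta^{1,a-1}$-twist; the resolution is that (b) as stated only claims $Z^a_\CK\cong\operatorname{Tot}(\CT_\CK)$, and the comparison with the \emph{transversal Hilbert scheme of $\oS$} is made only after passing to the reduced/AJ-fiber quotient in (c), where the twist by $\CalD$ in $\CK'=\CK^{-\alpha^\svee}\otimes\CalD$ precisely accounts for the discrepancy.

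For part (c), the plan is purely formal manipulation of the two quotient constructions. On the zastava side, $^{\vphantom{\alpha}}_\CalD\uoZ^a_\CK=\on{AJ}_Z^{-1}(\CalD)/\BG_m$ where $\on{AJ}_Z=\on{AJ}\circ\pi^a$ and $\BG_m=T^{\on{ad}}$ acts by rescaling $\xi$ (Definition \ref{ell zas}(5)), which under the identification of (b) is exactly the fiberwise dilation of $\operatorname{Tot}(\CT_\CK)$. On the Hilbert side, $^{\vphantom{\alpha}}_\CalD\ul\Hilb{}_\tr^a(\oS_{\CK'})=\on{AJ}^{-1}(\CalD)/\BG_m$ with the fiberwise dilation of $\oS_{\CK'}$. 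So both sides are $\BG_m$-quotients, over the same Abel-Jacobi fiber in $\on{Pic}^a(E)$, of total spaces of line-bundle-like data on $E^{(a)}$; the key computation is to match the two after the $\CalD$-twist. Concretely, restricting the rank-$a$ bundle $\CT_\CK$ to the fiber $\on{AJ}^{-1}(\CalD)\subset E^{(a)}$ and taking the $\BG_m$-quotient, I would show this equals the transversal Hilbert scheme of the surface $\oS_{\CK^{-\alpha^\svee}\otimes\CalD}$ — the extra $\otimes\CalD$ appearing because over the Abel-Jacobi fiber the relevant vector bundle $\CT_\CK|_{\on{AJ}^{-1}(\CalD)}$ can be rewritten, using the projection formula for $\bq$ and the determinant/$\CalD$-constraint $\sum x_r$ fixed in $\on{Pic}^a$, as the pushforward $\bq_*$ of $\pr_E^*(\CK^{-\alpha^\svee}\otimes\CalD)$ (the $\Delta^{1,a-1}$-twist and the $\CalD$-twist getting exchanged in the quotient). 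The main obstacle I expect is exactly this last bookkeeping step — getting the twist by $\CalD$ and the $\Delta^{1,a-1}$ versus $\CU$-vs-$\CT$ discrepancy to match \emph{on the nose} rather than up to an ambiguous line bundle on $\on{Pic}^a(E)$ — together with verifying that the constructed comparison morphisms in (a) and (b) are isomorphisms over the non-reduced-divisor locus of $E^{(a)}$, where factorization degenerates and one must argue by flatness and the explicit local \'etale identification with rational zastava (Remark \ref{reduced zastava}) rather than by a direct fiber computation.
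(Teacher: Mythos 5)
Your overall architecture (unwind the definition for $a=1$, then treat general $a$ and pass to the reduced quotient) matches the paper's, and your reading of the $\CU_\CK$-versus-$\CT_\CK$ discrepancy and of the scalar ambiguity resolved by the $\BG_m$-quotient in (c) is exactly right. But there are two concrete gaps. First, in (a) the mechanism you propose does not produce the fiber coordinate of the surface: the composition $\xi^{\omega^\svee}\circ\eta^{\omega^\svee}$ is a nonzero section of the degree-$1$ line bundle $\CL^{-\omega^\svee}\otimes\CK^{\omega^\svee}$, so it records only the factorization divisor $D=x$ (and the proposed normalization $\CL^{\omega^\svee}=\CK^{-\omega^\svee}$ is impossible on degree grounds, since $\deg\CL^{\omega^\svee}=-1$ while $\deg\CK^{-\omega^\svee}=0$). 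The point of $S_{\CK^{-\alpha^\svee}}$ over $x$ has to come from elsewhere: the paper uses the trivialization of $\det\CV_\CF^{\omega^\svee}$ to dualize $\xi$ into an embedding $\CK^{-\omega^\svee}\hookrightarrow\CV_\CF^{\omega^\svee}$, exhibits $\CV_\CF^{\omega^\svee}$ as an upper modification of $\CK^{-\omega^\svee}\oplus\CK^{\omega^\svee}(-x)$ inside $\CK^{-\omega^\svee}(x)\oplus\CK^{\omega^\svee}$, and reads off the fiber coordinate as the graph of the induced homomorphism $\CK^{\omega^\svee}_x\to\CK^{-\omega^\svee}_x$, an element of $\CK^{-\alpha^\svee}_x$ after trivializing $\CT_E$ via Remark~\ref{triv can}.

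Second, in (b) your plan --- identify the fiber over reduced divisors by factorization and then extend by normality/flatness --- stalls exactly where you say it does: normality or flatness of $Z^a_\CK$ over $E^{(a)}$ is not available in advance (it is essentially a consequence of the statement being proved), and a birational identification over the complement of the diagonals does not by itself extend to an isomorphism. The paper avoids this entirely by making the $a=1$ analysis uniform in $D$: a point of $Z^a_\CK$ over an arbitrary, possibly non-reduced, $D\in E^{(a)}$ is an $a$-dimensional submodule $V\subset(\CK^{-\omega^\svee}(D)/\CK^{-\omega^\svee})\oplus(\CK^{\omega^\svee}/\CK^{\omega^\svee}(-D))$ transversal to the first summand, hence the graph of $h_V\in\Hom_{\CO_E}(\CO_D,\CK^{-\alpha^\svee}(D)/\CK^{-\alpha^\svee})$; cyclicity of $\CO_D$ identifies this Hom space with $\CK^{-\alpha^\svee}(D)/\CK^{-\alpha^\svee}$, which is precisely the fiber of $\CT_\CK$, so no diagonal-crossing argument is needed. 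Once (b) is established this way, your plan for (c) goes through as in the paper: over $\on{AJ}^{-1}(\CalD)$ one has $\CK^{-\alpha^\svee}(D)/\CK^{-\alpha^\svee}\simeq(\CK^{-\alpha^\svee}\otimes\CalD)|_D$ canonically up to a multiplicative constant, and the quotient by $\BC^\times$ removes exactly this ambiguity.
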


\begin{proof}
  By definition, $\ol{Z}{}^{a}_\CK$ is the moduli space of the data
  $\CL^{\omega^\svee}\to\CV_\CF^{\omega^\svee}\to\CK^{\omega^\svee}$ such that the composition
  $\CL^{\omega^\svee}\to\CK^{\omega^\svee}$ is not zero. Here $\CV_\CF^{\omega^\svee}$ is a vector bundle
  on $E$ of rank 2 with trivialized determinant, and $\CL^{\omega^\svee}$ is a line bundle of
  degree $-a$. Hence the composition $\CL^{\omega^\svee}\hookrightarrow\CK^{\omega^\svee}$ identifies
  $\CL^{\omega^\svee}$ with $\CK^{\omega^\svee}(-D)$ for an effective divisor $D$ on $E$ of
  degree $a$. The trivialization of $\det\CV_\CF^{\omega^\svee}$ makes $\CV_\CF^{\omega^\svee}$ canonically
  selfdual, so the dual of our data is $\CK^{-\omega^\svee}\to\CV_\CF^{\omega^\svee}\to\CL^{-\omega^\svee}$.
  In particular, we obtain the sheaf embeddings \[\CK^{-\omega^\svee}\oplus\CK^{\omega^\svee}(-D)=
  \CK^{-\omega^\svee}\oplus\CL^{\omega^\svee}\hookrightarrow\CV_\CF^{\omega^\svee}\hookrightarrow
  \CL^{-\omega^\svee}\oplus\CK^{\omega^\svee}=\CK^{-\omega^\svee}(D)\oplus\CK^{\omega^\svee}.\]
  In other words, $\CV_\CF^{\omega^\svee}$ is a degree $a$ upper modification of
  $\CK^{-\omega^\svee}\oplus\CK^{\omega^\svee}(-D)$ at $D$. The open subvariety
  $Z^{a}_\CK\subset\ol{Z}{}^{a}_\CK$ is given by the open condition that the projection
  of $\CV_\CF^{\omega^\svee}$ to $\CK^{\omega^\svee}$ is surjective, and the open subvariety
  $\oZ^{a}_\CK\subset Z^{a}$ is given by the extra open condition that the projection of
  $\CV_\CF^{\omega^\svee}$ to $\CK^{-\omega^\svee}(D)$ is surjective. Yet in other words,
  $\ol{Z}{}^{a}_\CK$ is the moduli space of $a$-dimensional $\CO_E$-submodules
  $V\subset(\CK^{-\omega^\svee}(D)/\CK^{-\omega^\svee})\oplus(\CK^{\omega^\svee}/\CK^{\omega^\svee}(-D))$,
  the open subvariety $Z^{a}_\CK\subset\ol{Z}{}^{a}_\CK$ is given by the open condition that
  $V$ is transversal to $\CK^{-\omega^\svee}(D)/\CK^{-\omega^\svee}$, and the open subvariety
  $\oZ^{a}_\CK\subset Z^{a}_\CK$ is given by the extra open condition that $V$ is transversal
  to $\CK^{\omega^\svee}/\CK^{\omega^\svee}(-D)$.

  If $a=1$, then $D$ is a single point $x\in E$, and the fiber of $\ol{Z}{}^1_\CK$ over
  $x\in E$ is a projective line
  $\BP\big((\CK^{-\omega^\svee}(x)/\CK^{-\omega^\svee})\oplus(\CK^{\omega^\svee}/\CK^{\omega^\svee}(-x))\big)$.
  Hence $\ol{Z}{}^1_\CK$ is the projectivization of the rank 2 vector bundle
  $\CK^{-\omega^\svee}\otimes\CT_E\oplus\CK^{\omega^\svee}$ over $E$. The trivialization of
  the canonical line bundle $\bomega_E$ in~Remark~\ref{triv can} gives rise to a trivialization
  of the tangent line bundle $\CT_E$, and we obtain an isomorphism
  $\ol{Z}{}^1_\CK\cong\BP(\CK^{-\omega^\svee}\oplus\CK^{\omega^\svee})=
  \BP(\CK^{-\alpha^\svee}\oplus\CO_E)$. Furthermore, a point of $Z^1_\CK$ over $x\in E$ can
  be viewed as the graph of a homomorphism from $\CK^{\omega^\svee}_x$ to $\CK^{-\omega^\svee}_x$, so
  $Z^1_\CK$ gets identified with the total space of the line bundle
  $\CH om(\CK^{\omega^\svee},\CK^{-\omega^\svee})=\CK^{-\alpha^\svee}$. Finally, a point of $\oZ^1_\CK$
  over $x\in E$ can be viewed as the graph of an isomorphism from $\CK^{\omega^\svee}_x$ to
  $\CK^{-\omega^\svee}_x$. This completes our proof of (a).

\medskip

  Recall that the fiber of $\Hilb^a_\tr(S_{\CK^{-\alpha^\svee}})$ (respectively, of $\Hilb^a_\tr(\oS_{\CK^{-\alpha^\svee}})$) 
  over $D\in E^{(a)}$ is canonically isomorphic to
  $\Hom_{\CO_E}(\CO_D,\CK^{-\alpha^\svee}/\CK^{-\alpha^\svee}(-D))$ (respectively, to
  $\Isom_{\CO_E}(\CO_D,\CK^{-\alpha^\svee}/\CK^{-\alpha^\svee}(-D))$), where $\CO_D=\CO_E/\CO_E(-D)$.
  On the other hand, an $a$-dimensional $\CO_E$-submodule
  $V\subset(\CK^{-\omega^\svee}(D)/\CK^{-\omega^\svee})\oplus(\CK^{\omega^\svee}/\CK^{\omega^\svee}(-D))$
  transversal to $\CK^{-\omega^\svee}(D)/\CK^{-\omega^\svee}$ is the graph of a homomorphism
  $h_V\in\Hom_{\CO_E}(\CK^{\omega^\svee}/\CK^{\omega^\svee}(-D),\CK^{-\omega^\svee}(D)/\CK^{-\omega^\svee})=
  \Hom_{\CO_E}(\CO_D,\CK^{-\alpha^\svee}(D)/\CK^{-\alpha^\svee})$.
  Furthermore, $V$ is
  also transversal to $\CK^{\omega^\svee}/\CK^{\omega^\svee}(-D)$ iff $h_V$ is invertible.

  Since $\CO_D$ is a cyclic $\CO_E$-module with generator 1, a homomorphism
  $h_V\in\Hom_{\CO_E}(\CO_D,\CK^{-\alpha^\svee}(D)/\CK^{-\alpha^\svee})$ is uniquely determined
  by $h_V(1)$, so that $\Hom_{\CO_E}(\CO_D,\CK^{-\alpha^\svee}(D)/\CK^{-\alpha^\svee})=
  \CK^{-\alpha^\svee}(D)/\CK^{-\alpha^\svee}$, and the latter space is nothing but the fiber of
  the vector bundle $\CT_\CK$ at $D\in E^{(a)}$. This completes the proof of~(b).

  \medskip

  We have just seen that the fiber of $\oZ^{a}_\CK$ over $D\in E^{(a)}$ is canonically
  isomorphic to $\Isom_{\CO_E}(\CO_D,\CK^{-\alpha^\svee}(D)/\CK^{-\alpha^\svee})$. If $D$ runs over
  the fiber of the Abel-Jacobi map over $\CalD=\CK^{\omega^\svee}\otimes\CL^{-\omega^\svee}$, then
  $\CK^{-\alpha^\svee}(D)/\CK^{-\alpha^\svee}\simeq(\CK^{-\alpha^\svee}\otimes\CalD)|_D$,
  and the isomorphism is well defined up to a multiplicative constant.
  Hence $\Isom_{\CO_E}(\CO_D,\CK^{-\alpha^\svee}(D)/\CK^{-\alpha^\svee})\simeq
  \Isom_{\CO_E}(\CO_D,(\CK^{-\alpha^\svee}\otimes\CalD)|_D)$,
  and the isomorphism is well defined up to a multiplicative constant.
  The latter space is the fiber of $\Hilb{}_\tr^a(\oS_{\CK^{-\alpha^\svee}\otimes\CalD})$ over $D$.
  Finally, taking quotient by the action of $\BC^\times$ removes the ambiguity in the choice
  of the above isomorphism, and produces the desired canonical isomorphism.

  The above argument generalizes straightforwardly to the case of families over a base $B$.
  For example, the isomorphism
  $\Isom_{\CO_{E\times B}}(\CO_{D\times B},\CK^{-\alpha^\svee}(D\times B)/\CK^{-\alpha^\svee})\simeq
  \Isom_{\CO_{E\times B}}(\CO_{D\times B},(\CK^{-\alpha^\svee}\otimes\CalD)|_{D\times B})$ is well defined
  up to $\CO^\times_B$.
  
  This completes the proof of~(c).
  \end{proof}

\section{Mirkovi\'c construction}
\label{tri}
From now on we assume that $G$ is simply laced. We choose an orientation of the Dynkin diagram of
$G$. We obtain a quiver $Q$ with the set of vertices $Q_0=I$, and the set of arrows $Q_1$. For an
arrow $h=(i\to j)$ we use the standard notation $j=\on{i}(h),\ i=\on{o}(h)$.

\subsection{Compactified zastava}
\label{mir com}
For a $T$-torsor $\CK_T$ on $E$ and $i\in I$, we define a line bundle $\CK_i$ on $E$ associated
to the simple root character $\alpha^\svee_i\colon T\to\BC^\times$.
Given a collection of line bundles $\CK_i,\ i\in I$, and $\beta=\sum b_i\alpha_i\in\Lambda_{\on{pos}}$, we
define a line bundle $\CK^\beta:=\boxtimes_{i\in I}\CK_i^{(b_i)}$ on $E^\beta=\prod_{i\in I}E^{(b_i)}$.
Here $\CK_i^{(b_i)}$ is the descent of $\CK_i^{\boxtimes b_i}$ from $E^{b_i}$ to $E^{(b_i)}$ obtained
by passing to $S_{b_i}$-invariant sections on $U^{(b_i)}$, where $U\subset E$ is an affine open
subset.
Given $\beta,\gamma\in\Lambda_{\on{pos}}$ with $\beta+\gamma=\alpha$, we consider the diagram
\[E^\beta\xleftarrow{\bp}E^\beta\times E^\gamma\xrightarrow{\bq}E^\alpha,\]
where $\bp$ is the projection, and $\bq$ is the addition of colored effective divisors.
For $i,j\in I$ we define $\Delta_{ij}^{\beta,\gamma}\subset E^\beta\times E^\gamma$ as the incidence divisor
where a point of color $i$ in $E^\beta$ meets a point of color $j$ in $E^\gamma$ (the case $j=i$ is
allowed).
We also define $\Delta_{ij}^\beta\subset E^\beta$ as the divisor formed by configurations where a point
of color $i$ meets a point of color $j$.
We define the factorizable vector bundle $\BV_\CK^\alpha$ on $E^\alpha$ as\footnote{Our definition looks
  different from~\cite[\S\S2.4.1,2.4.2]{myz}. This is due to dualization, cf.~Lemma~\ref{duo} below.}
\begin{equation}
  \label{mir bun}
  \BV_\CK^\alpha:=
  \bigoplus_{\beta+\gamma=\alpha}\bq_*\left(\bp^*\Big(\CK^\beta\big(\sum_{i\in I}\Delta_{ii}^\beta
  -\sum_{h\in Q_1}\Delta_{\on{o}(h)\on{i}(h)}^\beta\big)\Big)\big(\sum_{i\in I}\Delta^{\beta,\gamma}_{ii}\big)\right).  
\end{equation}
It contains two codimension 1 subbundles: $\BV_{\CK,\on{low}}^\alpha$ and $\BV_\CK^{\alpha,\on{up}}$, where in
the above direct sum we omit summands corresponding to $\beta=0$ (resp.\ $\gamma=0$).

The factorization structure is a canonical isomorphism for any decomposition
$\alpha=\alpha'+\alpha''$, between the pullbacks of $\BV_\CK^\alpha$ and
$\BV_\CK^{\alpha'}\boxtimes\BV_\CK^{\alpha''}$ to $(E^{\alpha'}\times E^{\alpha''})_{\on{disj}}$ (an open subset of
$E^{\alpha'}\times E^{\alpha''}$ formed by all the pairs of configurations where all the points of the
first configuration are distinct from all the points of the second one).
In particular, the rank of $\BV_\CK^\alpha$ equals $2^{|\alpha|}$, and the pullback of
$\BV_\CK^\alpha$ to $(\prod_{i\in I}E^{a_i})_{\on{disj}}$ is canonically isomorphic to
$\boxtimes_{i\in I}((\CK_i\oplus\CO_E)^{\boxtimes a_i})|_{(\prod_{i\in I}E^{a_i})_{\on{disj}}}$
(here $\alpha=\sum_{i\in I}a_i\alpha_i$). Let
$p^\alpha\colon (\prod_{i\in I}E^{a_i})_{\on{disj}}\to E^\alpha_{\on{disj}}$ stand for the unramified
Galois cover with Galois group $S_\alpha=\prod_{i\in I}S_{a_i}$ (the product of symmetric groups).
Then the vector bundle
$\boxtimes_{i\in I}((\CK_i\oplus\CO_E)^{\boxtimes a_i})|_{(\prod_{i\in I}E^{a_i})_{\on{disj}}}$ carries a
natural $S_\alpha$-equivariant structure, and $\BV_\CK^\alpha|_{E^\alpha_{\on{disj}}}
=\big(p^\alpha_*\boxtimes_{i\in I}((\CK_i\oplus\CO_E)^{\boxtimes a_i})|_{(\prod_{i\in I}E^{a_i})_{\on{disj}}}\big)^{S_\alpha}$.

Thus the projectivization
$\BP\big(\boxtimes_{i\in I}((\CK_i\oplus\CO_E)^{\boxtimes a_i})\big)|_{(\prod_{i\in I}E^{a_i})_{\on{disj}}}$ contains
the product of the ruled surfaces ($\BP^1$-bundles over $E$)
$\prod_{i\in I}\BP(\CK_i\oplus\CO_E)^{a_i}|_{(\prod_{i\in I}E^{a_i})_{\on{disj}}}$ (Segre embedding). Hence
$\BP\BV_\CK^\alpha|_{E^\alpha_{\on{disj}}}$ contains
$\big(\prod_{i\in I}\BP(\CK_i\oplus\CO_E)^{a_i}|_{(\prod_{i\in I}E^{a_i})_{\on{disj}}}\big)/S_\alpha$.

\begin{defn} [I.~Mirkovi\'c]
  \label{mir def}
  \textup{(a)} {\em Mirkovi\'c compactified zastava} $^{\on{Mir}}\ol{Z}{}^\alpha_\CK$ is defined as the
    closure of
    $\big(\prod_{i\in I}\BP(\CK_i\oplus\CO_E)^{a_i}|_{(\prod_{i\in I}E^{a_i})_{\on{disj}}}\big)/S_\alpha$ in
    $\BP\BV_\CK^\alpha$ (with the reduced closed subscheme structure).

    \textup{(b)} The {\em upper} (resp.\ {\em lower}) {\em boundary}
    $\partial_{\on{up}}{}^{\on{Mir}}\ol{Z}{}^\alpha_\CK$ (resp.\
    $\partial_{\on{low}}{}^{\on{Mir}}\ol{Z}{}^\alpha_\CK$) is defined as the intersection
    $^{\on{Mir}}\ol{Z}{}^\alpha_\CK\cap\BP\BV_\CK^{\alpha,\on{up}}$ (resp.\
    $^{\on{Mir}}\ol{Z}{}^\alpha_\CK\cap\BP\BV_{\CK,\on{low}}^\alpha$).

    \textup{(c)} {\em Mirkovi\'c zastava} $^{\on{Mir}}\!Z^\alpha_\CK$ is defined as the open
    subscheme in $^{\on{Mir}}\ol{Z}{}^\alpha_\CK$ obtained by removing the upper boundary
    $\partial_{\on{up}}{}^{\on{Mir}}\ol{Z}{}^\alpha_\CK$.

    \textup{(d)} {\em Mirkovi\'c open zastava} $^{\on{Mir}}\!\oZ^\alpha_\CK$ is defined as the open
    subscheme in $^{\on{Mir}}\!Z^\alpha_\CK$ obtained by further removing the lower boundary
    $\partial_{\on{low}}{}^{\on{Mir}}\ol{Z}{}^\alpha_\CK$.
\end{defn}

Returning to the usual compactified zastava (Definition~\ref{ell zas}), we set
\begin{equation}
  \label{393}
  \CK_i:=\CK^{-\alpha^\svee_i}.
\end{equation}
Then the factorization property of zastava along
with~Proposition~\ref{hilb vs zas}(a) gives rise to a canonical isomorphism
$\ol{Z}{}^\alpha_\CK|_{E^\alpha_{\on{disj}}}
\cong\big(\prod_{i\in I}\BP(\CK_i\oplus\CO_E)^{a_i}|_{(\prod_{i\in I}E^{a_i})_{\on{disj}}}\big)/S_\alpha$.
Thus we obtain a birational isomorphism
$\Theta^\circ\colon ^{\on{Mir}}\ol{Z}{}^\alpha_\CK\stackrel{\sim}\dasharrow\ol{Z}{}^\alpha_\CK.$

\begin{thm} [I.~Mirkovi\'c]\cite[2.4.6]{myz}
  \label{mir thm}
  The birational isomorphism $\Theta^\circ$ extends to a regular isomorphism
  $\Theta\colon ^{\on{Mir}}\ol{Z}{}^\alpha_\CK\iso(\ol{Z}{}^\alpha_\CK)_{\on{red}}$ with the
  compactified zastava equipped with the reduced scheme structure.
  Moreover, $\Theta$ restricts to the same named isomorphisms
  $^{\on{Mir}}\!Z^\alpha_\CK\iso(Z^\alpha_\CK)_{\on{red}}$ and also
  $^{\on{Mir}}\!\oZ^\alpha_\CK\iso\oZ^\alpha_\CK$.
\end{thm}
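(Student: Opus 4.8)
The plan is to verify that the birational map $\Theta^\circ$ extends by working locally over the configuration space $E^\alpha$, reducing to a statement about finite-type schemes that can be checked étale-locally, and then invoking the corresponding fact for rational zastava. First I would note that both sides carry factorization morphisms to $E^\alpha$ (on the left via the projection $\BP\BV_\CK^\alpha\to E^\alpha$ restricted to $^{\on{Mir}}\ol{Z}{}^\alpha_\CK$, on the right via $\pi^\alpha$), and $\Theta^\circ$ is a morphism over $E^\alpha$ on the open locus $E^\alpha_{\on{disj}}$ where configurations have disjoint support among different colors. The key reduction is that the question of extending $\Theta^\circ$ to a regular isomorphism is étale-local on $E^\alpha$: over a small étale (or formal) neighborhood of a point of $E^\alpha$, the elliptic zastava $(\ol{Z}{}^\alpha_\CK)_{\on{red}}$ becomes isomorphic to the corresponding rational compactified zastava $\ol Z^\alpha$ (by the last sentence of Remark~\ref{reduced zastava}, the elliptic and rational zastava agree étale-locally), and similarly the Mirkovi\'c bundle $\BV_\CK^\alpha$ and its projectivization degenerate étale-locally to the affine-space version used in the rational Mirkovi\'c construction, because all the twisting line bundles $\CK^\beta(\sum\Delta_{ii}^\beta-\sum\Delta^\beta_{\on{o}(h)\on{i}(h)})$ are trivialized after a suitable étale shrinking of the base.

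The main structural input I would then use is that on the rational side the analogous isomorphism is already known, or can be proved directly by a dimension/normality argument. Concretely, the strategy is: (1) show $^{\on{Mir}}\ol{Z}{}^\alpha_\CK$ is irreducible of dimension $2|\alpha|$ — this follows from Definition~\ref{mir def}(a) since it is defined as the closure of an irreducible locally closed subset of that dimension (a quotient by $S_\alpha$ of a product of $\BP^1$-bundles over $E$); (2) show $(\ol Z^\alpha_\CK)_{\on{red}}$ is irreducible of the same dimension $2|\alpha|$, which is Remark~\ref{reduced zastava}; (3) construct the extension of $\Theta^\circ$ as a morphism. For step (3), the cleanest route is to produce a morphism in the reverse direction from the moduli description: given a point of $(\ol Z^\alpha_\CK)_{\on{red}}$, i.e. the Plücker data $\CL^{\lambda^\svee}\hookrightarrow\CV_\CF^{\lambda^\svee}\to\CK^{\lambda^\svee}$, one extracts for each $i\in I$ and each point of the associated divisor a point of the ruled surface $\BP(\CK_i\oplus\CO_E)$ (using Proposition~\ref{hilb vs zas}(a) fiberwise over $E$), hence a point of $\prod_i\BP(\CK_i\oplus\CO_E)^{a_i}/S_\alpha$ over the disjoint locus, and one checks this rational map into $\BP\BV_\CK^\alpha$ extends — the extension being controlled precisely by the codimension-1 subbundles $\BV_{\CK,\on{low}}^\alpha$ and $\BV_\CK^{\alpha,\on{up}}$, which match the lower and upper boundary strata. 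Then one shows the two maps are mutually inverse on the dense open $E^\alpha_{\on{disj}}$, hence everywhere by separatedness, and that the source is reduced (by construction) while the target, after reduction, is identified with it — a birational morphism of reduced irreducible schemes which is an isomorphism over a dense open and whose source is normal (or at least such that the target is the scheme-theoretic image) is an isomorphism once one knows it is a closed immersion; this closed-immersion property is again checked étale-locally.

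The final claim, that $\Theta$ restricts to the stated isomorphisms on the open pieces, is then formal: by construction the upper boundary $\partial_{\on{up}}{}^{\on{Mir}}\ol{Z}{}^\alpha_\CK=\,^{\on{Mir}}\ol{Z}{}^\alpha_\CK\cap\BP\BV_\CK^{\alpha,\on{up}}$ corresponds under the moduli interpretation to the locus where some $\xi^{\lambda^\svee}$ fails to be surjective (the generalized $U_-$-structure degenerates), which is exactly the complement of $Z^\alpha_\CK$ inside $\ol Z^\alpha_\CK$; and the lower boundary corresponds to the locus where some $\eta^{\lambda^\svee}$ fails to be a subbundle embedding, i.e. the complement of $\oZ^\alpha_\CK$ inside $Z^\alpha_\CK$. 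So once the main isomorphism $\Theta$ is in hand, removing matching open/closed loci on both sides gives the three assertions simultaneously.

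\medskip

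The hard part will be step (3): controlling the indeterminacy locus of the rational map $(\ol Z^\alpha_\CK)_{\on{red}}\dashrightarrow\BP\BV_\CK^\alpha$ and proving it is in fact a morphism, i.e. that the Plücker data always determine a well-defined line in the appropriate fiber of $\BV_\CK^\alpha$ even when points of different colors collide. This is where the precise combinatorics of the twist by $\sum_{i}\Delta_{ii}^\beta-\sum_{h\in Q_1}\Delta_{\on{o}(h)\on{i}(h)}^\beta$ enters — the quiver orientation is built into $\BV_\CK^\alpha$, so one must check that the degenerations of the Plücker/factorization data respect exactly this twist. I expect that, as in~\cite{myz}, the way to do this is to reduce the collision of colors to the rank-one (i.e. $\SL(2)$ or single-vertex) computation via the factorization property restricted to partial-diagonal strata, and then to a local computation near a point of $E^\alpha$ with a single collision, which is a finite (and essentially rational) calculation. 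An alternative, if a direct construction of the extension proves awkward, is to quote the theorem of Mirkovi\'c from~\cite{myz} as stated (it is attributed there) and simply supply the moduli-theoretic identification of the boundary strata; but a self-contained argument along the lines above is preferable.
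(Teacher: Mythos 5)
Your proposal does not follow the paper's route, and the step you yourself flag as ``the hard part'' --- extending the rational map $(\ol{Z}{}^\alpha_\CK)_{\on{red}}\dashrightarrow\BP\BV_\CK^\alpha$ across the diagonals --- is a genuine gap that your suggested remedies do not close. Reducing a collision of colors to a local computation near a point of $E^\alpha$ ``with a single collision'' only treats the generic points of the diagonal divisors, i.e.\ codimension~1 in the base; a rational map into a projective bundle can still be undefined in codimension~$\geq 2$, and the usual way to rule this out (normality of the source plus identification of the map with a base-point-free linear system) is not available here --- the paper explicitly does not know normality of these compactified spaces (see the discussion before Proposition~\ref{pois}). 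Likewise, the \'etale-local comparison with rational zastava from Remark~\ref{reduced zastava} does not discharge anything: it merely transports the problem to the rational analogue of the same theorem, which is equally nontrivial and not independently established anywhere you can point to. Finally, your identification of the boundary strata with the loci where $\xi^{\lambda^\svee}$ or $\eta^{\lambda^\svee}$ degenerate is asserted rather than derived; it is only ``formal'' once one already knows how the embedding into $\BP\BV_\CK^\alpha$ is produced.

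The paper's argument is quite different and supplies exactly the missing mechanism. It embeds $\ol{Z}{}^\alpha_\CK$ into a twisted Beilinson--Drinfeld Grassmannian and uses the determinant line bundle $\fL$: the $T$-fixed point subscheme is $\bigsqcup_{\beta+\gamma=\alpha}E^\beta\times E^\gamma$, the restriction of $\fL$ to each component is computed to be precisely the dual of the corresponding summand of $\BV_\CK^\alpha$, and surjectivity of restriction to fixed points (via \cite{z}) plus an iterated torsion-killing procedure on $\bq_*\bigl(\fL|_{\ol{Z}{}^\alpha_\CK}\bigr)$ shows that $(\ol{Z}{}^\alpha_\CK)_{\on{red}}$ is \emph{globally} embedded into $\BP\BV_\CK^\alpha$ by $\fL$, hence equals the closure of its generic fiber, i.e.\ $^{\on{Mir}}\ol{Z}{}^\alpha_\CK$. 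This bypasses any extension-of-rational-maps or normality issue. If you want a self-contained proof you should adopt this fixed-point/determinant-bundle strategy (or, as you note, cite \cite[2.4.6]{myz} outright); as written, your construction of $\Theta$ is not complete.
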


\begin{proof}
For the readers' convenience we sketch a proof. We consider a twisted version $\Gr_{BD,\CK}$ of
the Beilinson-Drinfeld Grassmannian: the moduli space of $|\alpha|$-tuples of points in $E$,
and $G$-bundles $\CF_G$ on $E$ equipped with a rational isomorphism
$\sigma\colon\CF_G\stackrel{\sim}{\dasharrow}\Ind_T^G\CK_T$ regular away from the above
$|\alpha|$-tuple. The product of symmetric groups $S_\alpha\subset S_{|\alpha|}$ acts on
$\Gr_{BD,\CK}$, and we denote by $\Gr^\alpha_{BD,\CK}$ the categorical quotient (partially
symmetrized twisted Beilinson-Drinfeld Grassmannian). The generically transversal generalized
$B$- and twisted $U_-$-structures in the data of zastava define a generic isomorphism
$\CF_G\stackrel{\sim}{\dasharrow}\Ind_T^G\CK_T$; this way we obtain a closed embedding
$\ol{Z}{}^\alpha_\CK\hookrightarrow\Gr^\alpha_{BD,\CK}$.

We consider the corresponding closed embedding of the $T$-fixed point subschemes
$(\ol{Z}{}^\alpha_\CK)^T\hookrightarrow(\Gr^\alpha_{BD,\CK})^T$. One can construct an isomorphism
$(\ol{Z}{}^\alpha_\CK)^T\simeq\bigsqcup_{\beta+\gamma=\alpha}E^\beta\times E^\gamma$. Furthermore,
one can identify the restriction of the ample determinant line bundle $\fL$ on
$\Gr^\alpha_{BD,\CK}$ to the connected component $E^\beta\times E^\gamma\subset(\ol{Z}{}^\alpha_\CK)^T$
with the line bundle $\bp^*\Big(\CK^{-\beta}\big(-\sum_{i\in I}\Delta_{ii}^\beta
+\sum_{h\in Q_1}\Delta_{\on{o}(h)\on{i}(h)}^\beta\big)\Big)$,
cf.~\cite[Proposition~2.4.1]{myz}.

Now consider the restrictions
$\bq_*\fL\to\bq_*\left(\fL|_{\ol{Z}{}^\alpha_\CK}\right)\to\bq_*\left(\fL|_{(\ol{Z}{}^\alpha_\CK)^T}\right)$,
where $\bq\colon\Gr^\alpha_{BD,\CK}\to E^\alpha$ is the natural projection. The composition
$\bq_*\fL\to\bq_*\left(\fL|_{(\ol{Z}{}^\alpha_\CK)^T}\right)$ is surjective since it equals another
composition of
$\bq_*\fL\to\bq_*\left(\fL|_{(\Gr^\alpha_{BD,\CK})^T}\right)\to\bq_*\left(\fL|_{(\ol{Z}{}^\alpha_\CK)^T}\right)$
that is surjective e.g.\ by~\cite{z}. Hence the restriction
$r_0\colon \bq_*\left(\fL|_{\ol{Z}{}^\alpha_\CK}\right)\to\bq_*\left(\fL|_{(\ol{Z}{}^\alpha_\CK)^T}\right)$
is surjective as well.

The restriction of $\bq$ to $\ol{Z}{}^\alpha_\CK$ is the factorization morphism $\pi^\alpha$.
By factorization, a general fiber of $\pi^\alpha$ is isomorphic to a product of projective
lines, and the restriction of $\fL$ to a general fiber is isomorphic to the exterior product
of line bundles $\CO_{\BP^1}(1)$. Hence the restriction $r_0$ to the $T$-fixed points is an
isomorphism over the generic point of $E^\alpha$. If the coherent sheaf
$\bq_*\left(\fL|_{\ol{Z}{}^\alpha_\CK}\right)$ were torsion free, $r_0$ would be injective, and hence
an isomorphism. However, the direct image $\bq_*\left(\fL|_{\ol{Z}{}^\alpha_\CK}\right)$ does have
torsion (essentially due to the nonreducedness of the compactified zastava,
cf.~Remark~\ref{reduced zastava}).

We denote by $\CT_0\subset\bq_*\left(\fL|_{\ol{Z}{}^\alpha_\CK}\right)$ the torsion subsheaf.
We impose the relations $\CT_0$ on the image of the projective embedding of
$\ol{Z}{}^\alpha_\CK$ into $\BP(\bq_*\fL)$. The resulting closed subscheme of $\ol{Z}{}^\alpha_\CK$
is denoted $^{(1)}\ol{Z}{}^\alpha_\CK$. The fixed point subschemes $({}^{(1)}\ol{Z}{}^\alpha_\CK)^T$
and $(\ol{Z}{}^\alpha_\CK)^T$ coincide since the latter one is reduced. Hence the restriction
$r_1\colon \bq_*\left(\fL|_{^{(1)}\ol{Z}{}^\alpha_\CK}\right)\to\bq_*\left(\fL|_{(\ol{Z}{}^\alpha_\CK)^T}\right)$
is surjective. We denote by $\CT_1\subset\bq_*\left(\fL|_{^{(1)}\ol{Z}{}^\alpha_\CK}\right)$ the torsion
subsheaf. We impose the relations $\CT_1$ on the image of the projective embedding of
$^{(1)}\ol{Z}{}^\alpha_\CK$ into $\BP(\bq_*\fL)$. The resulting closed subscheme of
$^{(1)}\ol{Z}{}^\alpha_\CK$ is denoted $^{(2)}\ol{Z}{}^\alpha_\CK$.

Continuing like this we obtain a chain of closed subschemes
\[\ol{Z}{}^\alpha_\CK\supset{}^{(1)}\ol{Z}{}^\alpha_\CK\supset{}^{(2)}\ol{Z}{}^\alpha_\CK\supset\ldots\]
By the noetherian property of $\ol{Z}{}^\alpha_\CK$ this chain stabilizes with certain closed
subscheme to be denoted
$^{(\infty)}\ol{Z}{}^\alpha_\CK\subset\ol{Z}{}^\alpha_\CK$. If this subscheme is not reduced, we apply
the above procedure to $_{(1)}\ol{Z}{}^\alpha_\CK:=\big({}^{(\infty)}\ol{Z}{}^\alpha_\CK\big)_{\on{red}}$
to obtain its closed subscheme $^{(\infty)}_{(1)}\ol{Z}{}^\alpha_\CK$. Continuing like this we
obtain a chain of closed subschemes \[^{(\infty)}\ol{Z}{}^\alpha_\CK\supset
{}^{(\infty)}_{(1)}\ol{Z}{}^\alpha_\CK\supset{}^{(\infty)}_{(2)}\ol{Z}{}^\alpha_\CK\supset\ldots\]
By the noetherian property of $^{(\infty)}\ol{Z}{}^\alpha_\CK$ this chain stabilizes with
certain reduced closed subscheme
to be denoted $^{(\infty)}_{(\infty)}\ol{Z}{}^\alpha_\CK\subset\ol{Z}{}^\alpha_\CK$. Since
$^{(\infty)}_{(\infty)}\ol{Z}{}^\alpha_\CK$ and $\ol{Z}{}^\alpha_\CK$ coincide over the generic
point of $E^\alpha$, the subscheme $^{(\infty)}_{(\infty)}\ol{Z}{}^\alpha_\CK$ must coincide with
$(\ol{Z}{}^\alpha_\CK)_{\on{red}}$.
The restriction morphism $r_\infty\colon
\bq_*\left(\fL|_{^{(\infty)}_{(\infty)}\ol{Z}{}^\alpha_\CK}\right)\to\bq_*\left(\fL|_{(\ol{Z}{}^\alpha_\CK)^T}\right)$
is surjective. By construction, $\bq_*\left(\fL|_{^{(\infty)}_{(\infty)}\ol{Z}{}^\alpha_\CK}\right)$ is torsion
free, so $r_\infty$ is an isomorphism. 
Thus $^{(\infty)}_{(\infty)}\ol{Z}{}^\alpha_\CK$ is embedded into $\BP\BV^\alpha_\CK$, and must coincide
there with the closure of its generic fiber, i.e.\ with $^{\on{Mir}}\ol{Z}{}^\alpha_\CK$.
\end{proof}

\subsection{Example of type $A_1$ for trivial $\CK$ \`a la Mirkovi\'c}
\label{A1 mir}
Recall the setup and notation of~\S\ref{sl2 vs hilb}. We assume $\CK$ is trivial and denote
$\ol{Z}{}^a_\CK$ by $\ol{Z}{}^a$ for short. The argument in the proof
of~Proposition~\ref{hilb vs zas}(a) defines an embedding of $\ol{Z}{}^a$ into the symmetrized
version $\Gr_{\SL(2),E^{(a)}}$ of Beilinson-Drinfeld Grassmannian of $G=\SL(2)$ of degree $a$,
cf.~\cite[\S4, \S7.2]{g}. We consider the determinant (relatively very ample) line bundle $\fL$ on
$\Gr_{\SL(2),E^{(a)}}$ and its restriction to $\ol{Z}{}^a$. The projection $\ol{Z}{}^a\to E^{(a)}$ is
denoted by $\pi^a$. We claim that there is a natural isomorphism
\[(\pi^a_*\fL)^\vee\simeq
\bigoplus_{b+c=a}\bq_*\Big(\bp^*\big(\CO_{E^{(b)}}(\Delta^b)\big)(\Delta^{b,c})\Big)\]
(notation of~\S\ref{mir com}). Indeed, let $(\ol{Z}{}^a)^T$ be the fixed point subscheme of
$\ol{Z}{}^a$. Then $(\ol{Z}{}^a)^T=\bigsqcup_{b+c=a}E^{(b)}\times E^{(c)}$:
to $D_b\in E^{(b)},\ D_c\in E^{(c)}$ we associate the $a$-dimensional vector subspace
\[V_{D_b,D_c}:=\CO_E(D_b)/\CO_E\oplus\CO_E(-D_b)/\CO_E(-D_b-D_c)\subset
(\CO_E(D)/\CO_E)\oplus(\CO_E/\CO_E(-D))\]
(notation of the proof of~Proposition~\ref{hilb vs zas}(a))
and denote the corresponding rank~2 vector bundle on $E$ by $\CV_\CF^{\omega^\svee}$.
The restriction to fixed points induces an isomorphism $\pi^a_*\fL\iso\pi^a_*(\fL|_{(\ol{Z}{}^a)^T})$,
see e.g.~\cite[\S2.4]{myz}. The fiber $\fL_{\CV_\CF^{\omega^\svee}}$ is
$\det^{-1}\!R\Gamma(E,\CV_\CF^{\omega^\svee})$ by definition, so that the fiber
$\fL_{V_{D_b,D_c}}$ equals
\begin{multline*}
  \det{}\!^{-1}H^0\big(E,\CO_E(D_b)/\CO_E\big)
  \otimes\det{}\!^{-1}H^0\big(E,\CO_E(-D_b)/\CO_E(-D_b-D_c)\big)\\
  \otimes\det H^0\big(E,\CO_E/\CO_E(-D_b-D_c)\big)
  =\det{}\!^2H^0\big(E,\CO_E/\CO_E(-D_b)\big)
\end{multline*}
(we are making use of the trivialization of $\bomega_E$ in~Remark~\ref{triv can} and of the
Serre duality to identify $\det^{-1}H^0(D,\CO_D(D))$ with $\det H^0(D,\CO_D)$). The latter line
is canonically isomorphic to the fiber of $\bfomega_{E^{(b)}}^2$ at $D_b\in E^{(b)}$. We conclude
that $\pi^a_*\fL=\bigoplus_{b+c=a}\bq_*(\bp^*\bfomega^2_{E^{(b)}})$. Furthermore, the dual vector
bundle of $\bq_*(\bp^*\bfomega^2_{E^{(b)}})$ is
$\bq_*\big(\bp^*\bfomega^{-2}_{E^{(b)}}(\Delta^{b,c})\big)$ by the relative
Grothendieck-Serre duality for $\bq$ since $\Delta^{b,c}$ is the ramification divisor of $\bq$.
Finally, $\bfomega^{-2}_{E^{(b)}}=\CO_{E^{(b)}}(\Delta^b)$.

\subsection{Example of type $A_2$ for trivial $\CK$ \`a la Mirkovi\'c}
\label{A2 mir}
In this section $I$ consists of two vertices $i,j$ connected by a single arrow $i\to j$, and
$\alpha=\alpha_i+\alpha_j$. We assume $\CK$ is trivial and denote $\ol{Z}{}^\alpha_\CK$ by
$\ol{Z}{}^\alpha$ for short. We consider the embedding of $\ol{Z}{}^\alpha$ into the
Beilinson-Drinfeld Grassmannian $\Gr_{\SL(3),E^2}$ of degree 2, cf.~\cite[\S4, \S7.2]{g}.
We consider the determinant (relatively very ample) line bundle $\fL$ on $\Gr_{\SL(3),E^2}$
and its restriction to $\ol{Z}{}^\alpha$. The projection $\ol{Z}{}^\alpha\to E\times E$ is
denoted by $\pi^\alpha$. We have
\[(\pi^\alpha_*\fL)^\vee=\CO_{E\times E}^{\oplus3}\oplus\CO_{E\times E}(-\Delta_{ij}).\]
Indeed, let $(\ol{Z}{}^\alpha)^T$ be the fixed point subscheme of $\ol{Z}{}^\alpha$.
Then $(\ol{Z}{}^\alpha)^T$ is isomorphic to the disjoint union of 4 copies of $E\times E$.
Namely, let $v_1,v_2,v_3$ denote the standard basis in the tautological representation of
$\SL(3)$ (so that $T$ acts diagonally). Let us think of points of
$\ol{Z}{}^\alpha\subset\Gr_{\SL(3),E^2}$ as of vector bundles $\CV$ on $E$ identified with
$\CO_Ev_1\oplus\CO_Ev_2\oplus\CO_Ev_3$ away from points $x_i,x_j\in E$. Then:\\
the first copy of $E\times E$ consists of $\CV=\CO_Ev_1\oplus\CO_Ev_2\oplus\CO_Ev_3$;\\
the second copy of $E\times E$ consists of $\CV=\CO_E(x_i)v_1\oplus\CO_E(-x_i)v_2\oplus\CO_Ev_3$;\\
the third copy of $E\times E$ consists of $\CV=\CO_Ev_1\oplus\CO_E(x_j)v_2\oplus\CO_E(-x_j)v_3$;\\
the fourth copy of $E\times E$ consists of
$\CV=\CO_E(x_i)v_1\oplus\CO_E(x_j-x_i)v_2\oplus\CO_E(-x_j)v_3$.

The restriction to fixed points induces an isomorphism
$\pi^\alpha_*\fL\iso\pi^\alpha_*(\fL|_{(\ol{Z}{}^\alpha)^T})$, see e.g.~\cite[\S2.4]{myz}.
The fiber $\fL_\CV$ is $\det\!^{-1}R\Gamma(E,\CV)$ by definition. The restriction
of $\fL$ to the first three copies of $E\times E$ is trivial, while the restriction of
$\fL$ to the fourth copy of $E\times E$ is $\CO_{E\times E}(\Delta_{ij})$.

\subsection{Example of type $A_1$ for regular $\CK$}
\label{A1 reg}
We consider the situation complementary to the one of~\S\ref{A1 mir}: we assume that
$\CK^2$ is {\em nontrivial}. The open elliptic zastava of degree $a$ is the moduli space
$\oZ_\CK^a$ of line subbundles $\CL\subset\CK\oplus\CK^{-1}$ of degree
$-a$. In other words, $\oZ_\CK^a$ is the moduli space of triples
$(\CL,\ s\in H^0(E,\CL^{-1}\CK),\ t\in H^0(E,\CL^{-1}\CK^{-1}))$ such that $s$ and $t$ have no
common zeros, viewed up to common rescaling. The factorization morphism
$\pi^a\colon\oZ_\CK^a\to E^{(a)}$ associates to $(\CL,s,t)$ the zero divisor $D$ of $s$.
We set $t':=t/s\in H^0(E,\CK^{-2}(D))$, a regular section that does not vanish on $D$.
We can also view $\oZ_\CK^a$ as the moduli space of triples $(\CL,D,t')$.
We have an embedding
\[\Upsilon_{t'}=\begin{pmatrix}1&t'\\ 0&1\end{pmatrix}\colon\CK^{-1}\oplus\CK(-D)\to\CK^{-1}\oplus\CK.\]
We consider the determinant line bundle $\fL$ on $\oZ_\CK^a$ whose fiber at
$(\CL,D,t')$ is $\det^{-1}H^0(E,\on{Coker}(\Upsilon_{t'}))$. Consider the dual map
$\Upsilon_{t'}^\vee\colon\CK^{-1}\oplus\CK\hookrightarrow\CK^{-1}(D)\oplus\CK$.
Then $H^0(E,\on{Coker}(\Upsilon_{t'}))$ gets identified with an $a$-dimensional subspace in
$H_D:=H^0\big(E,(\CK^{-1}(D)/\CK^{-1})\oplus(\CK/\CK(-D))\big)$.
This defines an embedding of $\oZ_\CK^a$ into a relative Grassmannian over $E^{(a)}$.
The closure of $\oZ_\CK^a$ in this relative Grassmannian is nothing but the
compactified zastava $\ol{Z}{}_\CK^a$. The determinant line bundle $\fL$ extends to the same
named line bundle on $\ol{Z}{}_\CK^a$. The fixed point subscheme $\left(\ol{Z}{}_\CK^a\right)^T$
(with respect to the Cartan torus $T\subset\SL(2)$) is finite over $E^{(a)}$, and the
restriction morphism
\begin{equation}
  \label{uno}
  \pi^a_*\fL\to\pi^a_*(\fL|_{\left(\ol{Z}{}_\CK^a\right)^T})
\end{equation}
is an isomorphism.

We set $\BV_{\CK^{-2}}^a=\bigoplus_{b+c=a}
\bq_*\left(\bp^*\Big((\CK^{-2})^{(b)}\big(\Delta^b\big)\Big)\big(\Delta^{b,c}\big)\right)$
(notation of~\S\ref{A1 mir}). We also consider a line bundle $\CM$ on $E^{(a)}$ with the
fiber $\det^{-1}H^0(D,\CK|_D)$ over $D\in E^{(a)}$. We will need the following well known result.

\begin{lem}
  \label{well known}
  For any $b>0$, there is an isomorphism $\bomega_{E^{(b)}}^{-2}\simeq\CO_{E^{(b)}}(\Delta^b)$.
\end{lem}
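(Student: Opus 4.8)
The plan is to identify both line bundles on $E^{(b)}$ via their pullbacks to $E^b$ under the quotient morphism $q \colon E^b \to E^{(b)}$, using the fact that a line bundle on $E^{(b)}$ is determined by its $S_b$-equivariant pullback to $E^b$ (equivalently, one descends from $E^b$ to $E^{(b)}$ by taking $S_b$-invariant sections over affine opens, as in~\S\ref{mir com}). First I would recall the standard description of the canonical bundle of the symmetric power: on $E^b$ one has $\bomega_{E^b} = \boxtimes_{k=1}^{b}\bomega_E$, which is trivial by our fixed trivialization of $\bomega_E$ (Remark~\ref{triv can}); the quotient map $q$ is ramified along the big diagonal, and the standard Hurwitz-type formula gives $q^*\bomega_{E^{(b)}} \cong \bomega_{E^b}(-R)$ where $R = \sum_{k<l}\Delta_{kl}$ is the ramification divisor (the union of the pairwise diagonals in $E^b$, each with multiplicity one). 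Hence $q^*\bomega_{E^{(b)}}^{-2} \cong \CO_{E^b}(2R)$.

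Next I would compare this with $q^*\CO_{E^{(b)}}(\Delta^b)$. By definition $\Delta^b \subset E^{(b)}$ is the locus of configurations with (at least) two equal points; its reduced preimage in $E^b$ is $\bigcup_{k<l}\Delta_{kl}$, but because $q$ is ramified of order $2$ transversally along each generic point of each $\Delta_{kl}$, one has $q^*\CO_{E^{(b)}}(\Delta^b) \cong \CO_{E^b}(2R)$ — the pullback of the divisor class picks up the ramification multiplicity. Thus $q^*\bomega_{E^{(b)}}^{-2}$ and $q^*\CO_{E^{(b)}}(\Delta^b)$ agree as line bundles on $E^b$, and moreover one checks the identification is $S_b$-equivariant since all constructions are manifestly symmetric. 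Descending back to $E^{(b)}$ yields the desired isomorphism $\bomega_{E^{(b)}}^{-2} \simeq \CO_{E^{(b)}}(\Delta^b)$.

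The main subtlety — and the step I would spell out carefully — is the bookkeeping of multiplicities: one must be sure that along the generic point of each diagonal $\Delta_{kl}$ the ramification index of $q$ is exactly $2$ (the local model being $(u,v)\mapsto(u+v, uv)$, i.e. $\BA^2 \to \BA^2$ branched to order $2$ along $\{u=v\}$), so that both the Hurwitz formula and the divisor pullback contribute the factor $2$ consistently; deeper strata (triple coincidences, etc.) have codimension $\geq 2$ and so do not affect the line-bundle class. An alternative, perhaps cleaner, route I would mention is to argue directly on $E^{(b)}$: the Abel–Jacobi map $E^{(b)}\to \on{Pic}^b(E)$ realizes $E^{(b)}$ (for $b$ large) as a projective bundle, or more robustly, one computes that $\bomega_{E^{(b)}}$ has degree $-1$ on the fiber class of a point moving in $E$ while $D'$ is fixed, matching $-\tfrac12$ of the self-intersection contribution of $\Delta^b$; combined with the observation that $E^{(b)}$ has trivial-along-fibers canonical bundle relative to $\on{Pic}^b(E)$ plus the pullback of a theta-type class that squares into $\CO(-\Delta^b)$, this pins down the isomorphism. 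Either way, the only real content is the transversal order-$2$ ramification of the symmetrization map, which is where I expect to spend the bulk of the verification.
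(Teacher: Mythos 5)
Your argument is correct and is essentially the paper's: both hinge on the order-two transversal ramification of the symmetrization map $E^b\to E^{(b)}$ along the pairwise diagonals (local model $(u,v)\mapsto(u+v,uv)$), the paper packaging this as an explicit descended section $s^2$ of $\bomega_{E^{(b)}}^{-2}$ vanishing on $\Delta^b$, while you phrase it as the Hurwitz formula plus $S_b$-equivariant descent of $\CO_{E^b}(2R)$. The one point worth spelling out in your "manifestly symmetric" step is that $S_b$ acts on $\bomega_{E^b}=\boxtimes\bomega_E$ through the sign character, so the trivializing section $p_1^*v\wedge\cdots\wedge p_b^*v$ is only anti-invariant and it is precisely its square that is equivariant --- which is why both you and the paper must work with $\bomega^{-2}$ rather than $\bomega^{-1}$.
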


\begin{proof}
  We were unable to locate a reference, so we give a proof. Let $p\colon E^b\to E^{(b)}$ be
  the natural symmetrization morphism. We have a natural map
  $\bomega^{-1}_{E^b}\to p^*\bomega^{-1}_{E^{(b)}}$ vanishing on the union of diagonals in $E^b$.
  Thus, if $v$ is a global nonvanishing differential on $E$, then
  $s=p_1^*\wedge\ldots\wedge p_b^*v$ can be viewed as a global section of
  $p^*\bomega^{-1}_{E^{(b)}}$ (here $p_r\colon E^b\to E$ is the projection to the $r$-th factor).
  A local computation shows that $s^2$ comes from a global section of $\bomega^{-2}_{E^{(b)}}$
  vanishing on $\Delta^b$. This gives the required isomorphism.
\end{proof}

  Now we are in a position to identify the direct image of the determinant line bundle.

\begin{lem}
  \label{duo}
  We have an isomorphism $\pi^a_*(\fL|_{\left(\ol{Z}{}_\CK^a\right)^T})\simeq
  \CM\otimes\left(\BV_{\CK^{-2}}^a\right)^\vee$.
\end{lem}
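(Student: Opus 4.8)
The identification in Lemma~\ref{duo} is an explicit fixed-point computation entirely parallel to the one carried out in~\S\ref{A1 mir} for the trivial $\CK$, so the plan is to run the same argument and keep track of the twists by $\CK$. First I would describe the fixed-point subscheme $\left(\ol{Z}{}_\CK^a\right)^T\subset\ol{Z}{}_\CK^a$: as in the proof of~\propref{hilb vs zas}, a $T$-fixed point over $D\in E^{(a)}$ is a decomposition $D=D_b+D_c$ with $b+c=a$, corresponding to the $a$-dimensional subspace
\[V_{D_b,D_c}:=H^0\big(E,\CK^{-1}(D_b)/\CK^{-1}\big)\oplus H^0\big(E,\CK(-D_b)/\CK(-D)\big)\subset H_D,\]
so that $\left(\ol{Z}{}_\CK^a\right)^T\simeq\bigsqcup_{b+c=a}E^{(b)}\times E^{(c)}$. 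The vector bundle $\CV$ attached to such a point is the degree-$a$ modification of $\CK^{-1}\oplus\CK(-D)$ at $D$ whose pieces are $\CK^{-1}(D_b)\oplus\CK(-D_c)$; equivalently it is a rank-$2$ bundle fitting in the obvious short exact sequences.

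Next I would compute the fiber of $\fL$ at such a fixed point. By definition $\fL_\CV=\det^{-1}R\Gamma(E,\on{Coker}\Upsilon_{t'})=\det^{-1}R\Gamma(E,\CV)$ up to the fixed determinant twists, and for the fixed point $V_{D_b,D_c}$ the bundle $\CV$ is an extension built from $\CK^{-1}(D_b)$ and $\CK(-D_c)$. Using the trivialization of $\bomega_E$ fixed in~\remref{triv can} and Serre duality exactly as in~\S\ref{A1 mir}, the only nontrivial contributions are the torsion quotients supported on $D_b$: one gets
\[\fL_{V_{D_b,D_c}}\;\simeq\;\det{}\!^{-1}H^0\big(D_b,(\CK^{-1}(D_b)/\CK^{-1})\big)\otimes\det{}\!^{-1}H^0\big(D_b,(\CK/\CK(-D_b))\big),\]
the $D_c$-pieces cancelling between the $\det^{-1}$ of $\CV$ and the $\det$ coming from the ambient bundle. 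The first factor is, by Serre duality and the $\bomega_E$-trivialization, $\det H^0(D_b,\CK^{-1}|_{D_b})$, and the second factor is $\det^{-1}H^0(D_b,\CK|_{D_b})$. The second factor, as $D_b$ varies over $E^{(b)}$, is precisely the restriction to the component $E^{(b)}\times E^{(c)}$ of the line bundle $\CM$ pulled back along the first projection; the first factor, again by Serre duality, is $\det^{-1}H^0(D_b,\CK^{-1}(D_b)/\CK^{-1})^\vee$ dualized appropriately, which I would identify with $\bp^*\big((\CK^{-2})^{(b)}(\Delta^b)\big)$ after invoking~\lemref{well known} to rewrite $\bomega_{E^{(b)}}^{-2}=\CO_{E^{(b)}}(\Delta^b)$ — this is exactly the bookkeeping that in the trivial case produced $\bp^*\bfomega^{-2}_{E^{(b)}}$.

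Finally, summing over the components $b+c=a$ and applying relative Grothendieck–Serre duality for the addition map $\bq\colon E^{(b)}\times E^{(c)}\to E^{(a)}$ (whose ramification divisor is $\Delta^{b,c}$, exactly as used in~\S\ref{A1 mir}) converts $\pi^a_*(\fL|_{(\ol{Z}{}_\CK^a)^T})=\bigoplus_{b+c=a}\bq_*\big(\bp^*(\CM_b\otimes\ldots)\big)$ into $\CM\otimes\big(\bigoplus_{b+c=a}\bq_*(\bp^*((\CK^{-2})^{(b)}(\Delta^b))(\Delta^{b,c}))\big)^\vee=\CM\otimes(\BV_{\CK^{-2}}^a)^\vee$, since $\CM$ is pulled back from $E^{(a)}$ and hence factors out of the sum. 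The main obstacle I anticipate is not conceptual but careful: making sure every Serre-duality identification uses the fixed $\bomega_E$-trivialization consistently so that the $\CM$-factor (depending only on $D_b$, hence pulled back along $\bp$, hence assembling into $\CM$ on $E^{(a)}$) cleanly separates from the $\BV^\vee$-factor, and that the signs/twists in "up to a fixed determinant" do not secretly depend on the choice of splitting $D=D_b+D_c$. Everything else is the verbatim $A_1$ computation of~\S\ref{A1 mir} with $\CO_E$ replaced by $\CK^{-1}$ and $\CK$ in the two summands.
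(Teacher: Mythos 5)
Your overall architecture matches the paper's: identify $(\ol{Z}{}^a_\CK)^T$ with $\bigsqcup_{b+c=a}E^{(b)}\times E^{(c)}$, compute the fiber of $\fL$ at a fixed point using Serre duality and the trivialization of $\bomega_E$, then apply relative Grothendieck--Serre duality for $\bq$ together with Lemma~\ref{well known}. But the central fiberwise computation is wrong, and the error is exactly at the point that produces the factor $\CM$. The fiber of $\fL$ at $V_{D_b,D_c}$ is $\det^{-1}$ of the two summands $H^0(E,\CK^{-1}(D_b)/\CK^{-1})$ and $H^0(E,\CK(-D_b)/\CK(-D))$. The second summand is supported on $D_c$, not on $D_b$; via the exact sequence $0\to\CK(-D_b)/\CK(-D)\to\CK/\CK(-D)\to\CK/\CK(-D_b)\to0$ its inverse determinant is $\det^{-1}H^0(D,\CK|_D)\otimes\det H^0(D_b,\CK|_{D_b})$. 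Combined with the first factor, which Serre duality on $D_b$ turns into $\det H^0(D_b,\CK|_{D_b})$ (not $\det H^0(D_b,\CK^{-1}|_{D_b})$ as you wrote), the fiber is $\det^{-1}H^0(D,\CK|_D)\otimes\det^2H^0(D_b,\CK|_{D_b})=\bq^*\CM\otimes\bp^*\big((\CK^2)^{(b)}\otimes\bomega^2_{E^{(b)}}\big)$. Your claimed fiber $\det^{-1}H^0(D_b,\CK^{-1}(D_b)/\CK^{-1})\otimes\det^{-1}H^0(D_b,\CK/\CK(-D_b))$ is canonically trivial (the two factors are Serre dual to one another) and, worse, depends only on $D_b$. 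This kills the argument: the reason $\CM$ factors out of $\bq_*$ is precisely that it depends on the full divisor $D=D_b+D_c$ and is therefore pulled back along the addition map $\bq$, so the projection formula applies. A factor pulled back along the first projection $\bp$, as you assert, does not factor out; your later phrase ``since $\CM$ is pulled back from $E^{(a)}$'' contradicts your own description of it as pulled back along the first projection.

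There is a second conflation: you identify a tensor factor of the fiber directly with $\bp^*\big((\CK^{-2})^{(b)}(\Delta^b)\big)$, i.e.\ with a summand of $(\BV^a_{\CK^{-2}})^\vee$ before any duality for $\bq$ has been performed. The correct order is to first obtain $\pi^a_*(\fL|_{(\ol{Z}{}^a_\CK)^T})\simeq\CM\otimes\bigoplus_{b+c=a}\bq_*\bp^*\big((\CK^2)^{(b)}\otimes\bomega^2_{E^{(b)}}\big)$, and only then apply relative Grothendieck--Serre duality for $\bq$ (whose relative dualizing sheaf is $\CO(\Delta^{b,c})$) and Lemma~\ref{well known} to recognize the direct sum as $(\BV^a_{\CK^{-2}})^\vee$; the twists by $\Delta^b$ and $\Delta^{b,c}$ only appear after that dualization. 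As written, your fiber formula would not reproduce either side of the asserted isomorphism, so the computation needs to be redone along the lines above.
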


\begin{proof}
For every splitting $D=D_b+D_c$ into the sum of effective divisors of degrees $b,c$, we have
a $T$-fixed point in $\ol{Z}{}_\CK^a$ corresponding to the subspace
\begin{multline*}
  H^0\big(E,(\CK^{-1}(D_b)/\CK^{-1})\oplus(\CK(-D_b)/\CK(-D))\big)\\
  \subset H^0\big(E,(\CK^{-1}(D)/\CK^{-1})\oplus(\CK/\CK(-D))\big)=H_D.
\end{multline*}
This gives rise to an isomorphism
$\tilde\bq\colon\bigsqcup_{b+c=a}E^{(b)}\times E^{(c)}\iso\left(\ol{Z}{}_\CK^a\right)^T$,
where $\pi^a\tilde\bq=\bq$.

In order to calculate $\tilde\bq_*\fL$, note that by the Serre duality on $D_b$ we have
\[H^0(E,\CK^{-1}(D_b)/\CK^{-1})=H^0(D_b,\CK^{-1}(D_b)|_{D_b})\simeq
H^0(D_b,\bomega_{D_b}\otimes\CK(-D_b)|_{D_b})^\vee.\]
Furthermore, by adjunction we have $\bomega_{D_b}\simeq\bomega_E(D_b)|_{D_b}\simeq\CO_E(D_b)|_{D_b}$.
Thus we get a natural isomorphism
$\det^{-1}H^0(E,\CK^{-1}(D_b)/\CK^{-1})\simeq\det H^0(D_b,\CK|_{D_b})$. The exact sequence
\[0\to\CK(-D_b)/\CK(-D)\to\CK/\CK(-D)\to\CK/\CK(-D_b)\to0\]
gives rise to an isomorphism
\[\det{}\!^{-1}H^0(E,\CK(-D_b)/\CK(-D))\simeq
\det{}\!^{-1}H^0(D,\CK|_D)\otimes\det H^0(D_b,\CK|_{D_b}).\]
Hence we deduce an isomorphism
\[\tilde\bq^*\fL|_{(D_b,D_c)}\simeq\det{}\!^{-1}H^0(D,\CK|_D)\otimes\det{}\!^2H^0(D_b,\CK|_{D_b}).\]
In other words, \[\tilde\bq^*\fL\simeq\bp^*\det{}\!^2\varpi_{E^{(b)}*}\varpi_E^*\CK\otimes\bq^*\CM,\]
where $\varpi_{E^{(b)}}\colon\fD_b\to E^{(b)}$ is the universal divisor, and
$\varpi_E\colon\fD_b\to E$ is the natural projection, while
$\CM=\det^{-1}\varpi_{E^{(a)}*}\varpi_E^*\CK$.

From the natural isomorphisms
\[\det\varpi_{E^{(b)}*}\varpi_E^*\CK\simeq
\on{Nm}_{\fD_b/E^{(b)}}(\varpi_E^*\CK)\otimes\det\varpi_{E^{(b)}*}\CO_{\fD_b}\simeq
\CK^{(b)}\otimes\bomega_{E^{(b)}}\]
we deduce an isomorphism
$\tilde\bq^*\fL\simeq\bq^*\CM\otimes\bp^*\big((\CK^2)^{(b)}\otimes\bomega_{E^{(b)}}^2\big)$.
Summing up over all decompositions $b+c=a$ we get an isomorphism
\[\pi^a_*(\fL|_{\left(\ol{Z}{}_\CK^a\right)^T})\simeq\CM\otimes
\bigoplus_{b+c=a}\bq_*\bp^*\big((\CK^2)^{(b)}\otimes\bomega_{E^{(b)}}^2\big).\]
Using relative Serre duality for $\bq$ and an isomorphism of the relative dualizing sheaf for
$\bq$ with $\CO_{E^{(b)}\times E^{(c)}}(\Delta^{b,c})$ we get an isomorphism
\[\Big(\bq_*\bp^*\big((\CK^2)^{(b)}\otimes\bomega_{E^{(b)}}^2\big)\Big)^\vee\simeq
\bq_*\bp^*\big((\CK^{-2})^{(b)}\otimes\bomega_{E^{(b)}}^{-2}\big)\big(\Delta^{b,c}\big).\]
Finally, using the isomorphism $\bomega_{E^{(b)}}^{-2}\simeq\CO_{E^{(b)}}(\Delta^b)$
of~Lemma~\ref{well known}, we identify the RHS with the corresponding summand in $\BV_{\CK^{-2}}^a$.

The lemma is proved.  
\end{proof}

\subsubsection{Identification of $\ol{Z}{}_\CK^a$ with Mirkovi\'c zastava}
From Lemma~\ref{duo} we obtain an embedding of $\ol{Z}{}_\CK^a$ into
$\BP\left(\CM^\vee\otimes\BV_{\CK^{-2}}^a\right)\simeq\BP\left(\BV_{\CK^{-2}}^a\right)$. We want to
calculate this morphism explicitly away from the diagonals.

First, we find an explicit inverse of the isomorphism~(\ref{uno}) over an \'etale open in
$E^{(a)}$. In particular, we will work away from the diagonals. Also, we consider the pullback
of the corresponding schemes and vector bundles to $E^a$ (but we will keep the same notations
for the base change from $E^{(a)}$ to $E^a$). Let $D=w_1+\ldots+w_a$ with all the points distinct.
For every subset $\aleph\subset\{1,\ldots,a\}$ we set
\[D_\aleph:=\sum_{r\in\aleph}w_r,\ H_\aleph:=H^0\big(E,(\CK^{-1}(D_\aleph)/\CK^{-1})\oplus
(\CK(-D_\aleph)/\CK(-D))\big)\subset H_D.\]
To $H_\aleph$ we associate a section $\theta_\aleph$ of the determinant line bundle on the
Grassmannian $\Gr(a,H_D)$ vanishing precisely over the set of subspaces that are not transversal
to $H_\aleph$. Namely, for a subspace $S\subset H_D$, the value of $\theta_\aleph$ at $S$ is
the determinant of the composition of natural maps $S\to H_D\to H_D/H_\aleph$.
Thus $\theta_\aleph$ is a section of the line bundle with fibers $\det(H_D/H_\aleph)\otimes\det^{-1}(S)$.
Note that $\det(H_D)$ is canonically trivialized due to Serre duality between $H^0(D,\CK^{-1}(D)|_D)$
and $H^0(D,\CK|_D)$, so we can view $\theta_\aleph$ as a global section of
$\fL\otimes\varpi_{E^{(a)}}^*\det^{-1}(H_\aleph)$ on $\ol{Z}{}_\CK^a$.

Note that $H_\aleph$ and $H_\gimel$ are transversal iff $\gimel=\{1,\ldots,a\}\setminus\aleph$.
Thus $\theta_\aleph(H_\gimel)=0$ for $\gimel\ne\{1,\ldots,a\}\setminus\aleph$. On the other hand,
$\theta_\aleph(H_{\{1,\ldots,a\}\setminus\aleph})\in\fL|_{H_{\{1,\ldots,a\}\setminus\aleph}}\otimes\det^{-1}(H_\aleph)$
is the determinant of the isomorphism $H_{\{1,\ldots,a\}\setminus\aleph}\iso H_D/H_\aleph$.
Hence the composition
\[\bigoplus_{\aleph\subset\{1,\ldots,a\}}\det(H_\aleph)\xrightarrow{(\theta_\aleph)}
\varpi_{E^{(a)}*}\left(\fL|_{\ol{Z}{}_\CK^a}\right)\to
\bigoplus_{\aleph\subset\{1,\ldots,a\}}\det{}\!^{-1}(H_\aleph)\]
is an isomorphism that is a direct sum of the isomorphisms
\[\theta_\aleph(H_{\{1,\ldots,a\}\setminus\aleph})\colon\det(H_\aleph)\to\det{}\!^{-1}(H_\aleph).\]
It follows that the canonical embedding of $\ol{Z}{}_\CK^a$ into the projectivization of
$\left(\varpi_{E^{(a)}*}\big(\fL|_{\ol{Z}{}_\CK^a}\big)\right)^\vee\simeq
\bigoplus_{\aleph\subset\{1,\ldots,a\}}\det(H_\aleph)$ is the morphism
\[\ol{Z}{}_\CK^a\xrightarrow{(\theta_\aleph)}
\BP\left(\bigoplus_{\aleph\subset\{1,\ldots,a\}}\det{}\!^{-1}(H_\aleph)\right)
\xrightarrow{\big(\theta_{\{1,\ldots,a\}\setminus\aleph}^{-1}(H_\aleph)\big)}
\BP\left(\bigoplus_{\aleph\subset\{1,\ldots,a\}}\det(H_\aleph)\right)\]
(where we use the duality
$\theta_\aleph^*(H_{\{1,\ldots,a\}\setminus\aleph})=\theta_{\{1,\ldots,a\}\setminus\aleph}(H_\aleph)$).

\subsubsection{Explicit form of the identification of~Lemma~\ref{duo}}
\label{Res}
Now we are in a position to calculate the isomorphism of~Lemma~\ref{duo} over a point
$(w_1,\ldots,w_a)\in E^a$. This isomorphism takes form
\begin{equation}
  \label{chetyre}
  \bigoplus_{\aleph\subset\{1,\ldots,a\}}\det{}\!^{-1}(H_\aleph)\simeq
  \left(\bigotimes_{r=1}^a\CK^{-1}|_{w_r}\right)\otimes\bigoplus_{\aleph\subset\{1,\ldots,a\}}
  \left(\bigotimes_{r\in\aleph}\CK^2|_{w_r}\right).
\end{equation}
Note that $H_\aleph=\bigoplus_{r\in\aleph}\CK^{-1}(w_r)|_{w_r}\oplus\bigoplus_{r'\not\in\aleph}\CK|_{w_{r'}}$, so
\begin{multline}
  \label{this}
  \det{}\!^{-1}(H_\aleph)\simeq\left(\bigotimes_{r\in\aleph}\CK(-w_r)|_{w_r}\right)\otimes
\left(\bigotimes_{r'\not\in\aleph}\CK^{-1}|_{w_{r'}}\right)\\
\simeq\left(\bigotimes_{r=1}^a\CK^{-1}|_{w_r}\right)\otimes
\left(\bigotimes_{r\in\aleph}\CK^2(-w_r)|_{w_r}\right).
\end{multline}
One can check that the isomorphism~(\ref{chetyre}) is obtained from~(\ref{this}) by taking the
direct sum over $\aleph\subset\{1,\ldots,a\}$ and making use of the trivializations of
$\bomega_{w_r}\simeq\bomega_E(w_r)|_{w_r}\simeq\CO_E(w_r)|_{w_r}$.
Hence the dual isomorphism to~(\ref{chetyre}) is induced by the natural isomorphisms
\[\det(H_\aleph)\simeq\left(\bigotimes_{r\in\aleph}\CK^{-1}|_{w_r}\right)\otimes
\left(\bigotimes_{r'\not\in\aleph}\CK|_{w_{r'}}\right)\\
\simeq\left(\bigotimes_{r=1}^a\CK|_{w_r}\right)\otimes
\left(\bigotimes_{r\in\aleph}\CK^{-2}|_{w_r}\right).\]
Thus the image of a point $\varphi=(\CL,s,t)\in\oZ^a_\CK$ in
$\BP\left(\bigoplus_{\aleph\subset\{1,\ldots,a\}}\bigotimes_{r\in\aleph}\CK^{-2}|_{w_r}\right)$ is
obtained by first taking the point $\big(\theta_{\{1,\ldots,a\}\setminus\aleph}(\varphi)\big)\in
\BP\left(\bigoplus_{\aleph\subset\{1,\ldots,a\}}\det^{-1}(H_{\{1,\ldots,a\}\setminus\aleph})\right)$
and then applying the natural isomorphisms $\det^{-1}(H_{\{1,\ldots,a\}\setminus\aleph})\iso\det(H_\aleph)$
to each component.

Finally, let us calculate the values of $\theta_{\{1,\ldots,a\}\setminus\aleph}$ at a point
$\varphi=(\CL,s,t)\in\oZ^a_\CK$. By definition, the corresponding point of the Grassmannian
$\Gr(a,H_D)$ is the image of the map
\[H^0(D,\CK|_D)\xrightarrow{(t',1)}H^0(D,\CK^{-1}(D)|_D\oplus\CK|_D)=H_D.\]
Thus the value of $\theta_{\{1,\ldots,a\}\setminus\aleph}$ is given by the determinant of the composition
\[H^0(D,\CK|_D)\xrightarrow{(t',1)}H_D\to H_D/H_{\{1,\ldots,a\}\setminus\aleph}\simeq
\bigoplus_{r\in\aleph}\CK^{-1}|_{w_r}\oplus\bigoplus_{r'\not\in\aleph}\CK|_{w_{r'}},\]
that is equal to
\[\theta_{\{1,\ldots,a\}\setminus\aleph}(t')=\prod_{r\in\aleph}\Res_{w_r}(t')\in\bigotimes_{r\in\aleph}
\CK^{-2}|_{w_r}\simeq\det{}\!^{-1}H^0(D,\CK|_D)\otimes\det{}\!^{-1}(H_{\{1,\ldots,a\}\setminus\aleph}).\]
Therefore, the corresponding point in the projectivization of
$\bigoplus_{\aleph\subset\{1,\ldots,a\}}\bigotimes_{r\in\aleph}\CK^{-2}|_{w_r}$
(i.e.\ in $^{\on{Mir}}\oZ^a_\CK|_{E^{(a)}\setminus\Delta}$) is the point with the homogeneous coordinates
$\left(\prod_{r\in\aleph}\Res_{w_r}(t')\right)_{\aleph\subset\{1,\ldots,a\}}$. It is easy to see that this is
nothing but the image under Segre embedding of the point
\begin{equation}
  \label{1:Res}
  \big(1:\Res_{w_1}(t')\big),\ldots,\big(1:\Res_{w_a}(t')\big).
\end{equation}

\subsection{Coulomb zastava}
\label{coul zas}
In this section we modify the construction of~\S\ref{mir com}. In~Theorem~\ref{zas vs branch} below we will
show that the resulting zastava space is isomorphic to the elliptic Coulomb
branch of a quiver gauge theory (for the Dynkin quiver $Q$ of $G$) when all the line bundles $\CK_i$
are trivial.

We define the factorizable vector bundle
$\BU^\alpha_\CK$ on $E^\alpha$ as
\begin{equation}
  \label{coul bun}
  \BU_\CK^\alpha:=
\bigoplus_{\beta+\gamma=\alpha}\bq_*\left(\bp^*\CK^\beta
\otimes\CO_{E^\beta\times E^\gamma}\big(\sum_{h\in Q_1}\Delta_{\on{o}(h)\on{i}(h)}^{\beta,\gamma}\big)\right).
\end{equation}
It contains two codimension 1 subbundles: $\BU_{\CK,\on{low}}^\alpha$ and $\BU_\CK^{\alpha,\on{up}}$, where in
the above direct sum we omit summands corresponding to $\beta=0$ (resp.\ $\gamma=0$).

As in~\S\ref{mir com}, $\BP\BU_\CK^\alpha|_{E^\alpha_{\on{disj}}}$ contains
$\big(\prod_{i\in I}\BP(\CK_i\oplus\CO_E)^{a_i}|_{(\prod_{i\in I}E^{a_i})_{\on{disj}}}\big)/S_\alpha$.

\begin{defn} 
  \label{coul def}
  \textup{(a)} {\em Coulomb compactified zastava} $^C\ol{Z}{}^\alpha_\CK$ is defined as the
    closure of
    $\big(\prod_{i\in I}\BP(\CK_i\oplus\CO_E)^{a_i}|_{(\prod_{i\in I}E^{a_i})_{\on{disj}}}\big)/S_\alpha$ in
    $\BP\BU_\CK^\alpha$ (with the reduced closed subscheme structure).

    \textup{(b)} The {\em upper} (resp.\ {\em lower}) {\em boundary}
    $\partial_{\on{up}}{}^C\ol{Z}{}^\alpha_\CK$ (resp.\
    $\partial_{\on{low}}{}^C\ol{Z}{}^\alpha_\CK$) is defined as the intersection
    $^C\ol{Z}{}^\alpha_\CK\cap\BP\BU_\CK^{\alpha,\on{up}}$ (resp.\
    $^C\ol{Z}{}^\alpha_\CK\cap\BP\BU_{\CK,\on{low}}^\alpha$).

    \textup{(c)} {\em Coulomb zastava} $^C\!Z^\alpha_\CK$ is defined as the open
    subscheme in $^C\ol{Z}{}^\alpha_\CK$ obtained by removing the upper boundary
    $\partial_{\on{up}}{}^C\ol{Z}{}^\alpha_\CK$.

    \textup{(d)} {\em Coulomb open zastava} $^C\!\oZ^\alpha_\CK$ is defined as the open
    subscheme in $^C\!Z^\alpha_\CK$ obtained by further removing the lower boundary
    $\partial_{\on{low}}{}^C\ol{Z}{}^\alpha_\CK$.
\end{defn}

\subsection{Example of type $A_1$ \`a la Coulomb}
\label{A1 cou}
Inside the
symmetrized version $\Gr_{\GL(2),E^{(a)}}$ of Beilinson-Drinfeld Grassmannian of $G=\GL(2)$ of degree
$a$, we consider the moduli space $M^a$ of locally free rank 2 subsheaves
$\CW\subset\CO_Ev_1\oplus\CO_Ev_2$ such that $\on{length}\!\big((\CO_Ev_1\oplus\CO_Ev_2)/\CW\big)=a$.
We consider the determinant (relatively very ample) line bundle
$\fL$ on $\Gr_{\GL(2),E^{(a)}}$ and its restriction to $M^a$. The projection $M^a\to E^{(a)}$ is
denoted by $\pi^a$. We have
\[(\pi^a_*\fL)^\vee=\bfomega^{-1}_{E^{(a)}}\otimes\bigoplus_{b+c=a}\bq_*\CO_{E^{(b)}\times E^{(c)}}\]
(notation of~\S\ref{mir com}). Indeed, let $T\subset\GL(2)$ be the diagonal Cartan torus in
the basis $v_1,v_2$ of $\BC^2$, and let
$(M^a)^T$ be the fixed point subscheme of $M^a$. Then $(M^a)^T=\bigsqcup_{b+c=a}E^{(b)}\times E^{(c)}$:
to $D_b\in E^{(b)},\ D_c\in E^{(c)}$ we associate
\[\CW_{D_b,D_c}:=\CO_E(-D_b)v_1\oplus\CO_E(-D_c)v_2\subset\CO_Ev_1\oplus\CO_Ev_2.\]
The restriction to fixed points induces an isomorphism $\pi^a_*\fL\iso\pi^a_*(\fL|_{(M^a)^T})$.
The fiber $\fL_\CW$ is $\det^{-1}\!R\Gamma(E,\CW)$ by definition, so that
the fiber $\fL_{\CW_{D_b,D_c}}=\det(\CO_E/\CO_E(-D_b))\otimes\det(\CO_E/\CO_E(-D_c))$. The latter line is
canonically isomorphic to the fiber of $\bfomega_{E^{(b)}}\boxtimes\bfomega_{E^{(c)}}$ at
$(D_b,D_c)\in E^{(b)}\times E^{(c)}$. We conclude that
$\pi^a_*\fL=\bigoplus_{b+c=a}\bq_*(\bfomega_{E^{(b)}\times E^{(c)}})$. Furthermore, the dual vector
bundle of $\bq_*(\bfomega_{E^{(b)}\times E^{(c)}})$ is
$\bq_*\big(\bfomega^{-1}_{E^{(b)}\times E^{(c)}}(\Delta^{b,c})\big)$ by the relative Grothendieck-Serre
duality for $\bq$ since $\Delta^{b,c}$ is the ramification divisor of $\bq$. Finally,
$\bfomega^{-1}_{E^{(b)}\times E^{(c)}}(\Delta^{b,c})=\bq^*\bfomega^{-1}_{E^{(a)}}$, and we are done by the
projection formula.

\medskip

Generalizing the above example, for a line bundle $\CK$ on $E$ of degree 0, we consider
the moduli space $M^a_\CK$ 
of locally free rank 2 subsheaves $\CW\subset\CK\oplus\CK^{-1}$ such that
$\on{length}\!\big((\CK\oplus\CK^{-1})/\CW\big)=a$. The same argument as above provides an
isomorphism $^C\ol{Z}{}^a_\CK\simeq M^a_\CK$. Here the Dynkin graph consists of the unique
  vertex $i$, and in the definition of $^C\ol{Z}{}^a_\CK$ we set $\CK_i=\CK^{-\alpha^\svee}=\CK^{-2}$.
    Furthermore, let $\oM^a_\CK\subset M^a_\CK$
be the open subspace formed by all $\CW\subset\CK\oplus\CK^{-1}$ transversal to both $\CK$ and
$\CK^{-1}$. Then the isomorphism $^C\ol{Z}{}^a_\CK\simeq M^a_\CK$ restricts to an isomorphism
$^C\!\oZ^a_\CK\simeq\oM^a_\CK$. Finally, the argument in the proof of~Proposition~\ref{hilb vs zas}(b)
establishes an isomorphism
\begin{equation}
  \label{hilb vs coul}
  ^C\!\oZ^a_\CK\simeq\oM^a_\CK\simeq\Hilb^a_\tr(\oS_{\CK^{-2}}).
\end{equation}

\subsection{Example of type $A_2$ \`a la Coulomb}
\label{A2 cou}
In this section $I$ consists of two vertices $i,j$ connected by a single arrow $i\to j$, and
$\alpha=\alpha_i+\alpha_j$. Then\[\BU^\alpha_\CK=\CO_E\boxtimes\CO_E\oplus
(\CK_i\boxtimes\CO_E)(\Delta_{ij})\oplus\CO_E\boxtimes\CK_j\oplus\CK_i\boxtimes\CK_j,\]
a 4-dimensional vector bundle on $E\times E$. The Coulomb compactified zastava
$^C\ol{Z}{}^\alpha_\CK\subset\BP\BU^\alpha_\CK$ is the zero locus of the
section $s$ of \[\Sym{}\!^2(\BU^\alpha_\CK)^\vee\otimes\big((\CK_i\boxtimes\CK_j)(\Delta_{ij})\big)\]
defined as follows. First, we set \['\BU^\alpha_\CK:=\CO_E\boxtimes\CO_E\oplus
\CK_i\boxtimes\CO_E\oplus\CO_E\boxtimes\CK_j\oplus\CK_i\boxtimes\CK_j=
(\CO_E\oplus\CK_i)\boxtimes(\CO_E\oplus\CK_j).\]
Then $\Sym{}\!^2({}'\BU^\alpha_\CK)^\vee\otimes(\CK_i\boxtimes\CK_j)$ has a canonical section
$\sigma$ defined as follows. Let $w_i,w_j$ be local nonvanishing sections of $\CO_E$, and let
$u_i,u_j$ be local nonvanishing sections of $\CK_i^{-1},\CK_j^{-1}$. Then
\[\sigma=\big((w_i\boxtimes w_j)\cdot(u_i\boxtimes u_j)
-(w_i\boxtimes u_j)\cdot(u_i\boxtimes w_j)\big)\otimes(w_i^{-1}u_i^{-1}\boxtimes w_j^{-1}u_j^{-1}).\]
We have a tautological embedding \[\Sym{}\!^2({}'\BU^\alpha_\CK)^\vee\otimes(\CK_i\boxtimes\CK_j)
\hookrightarrow\Sym{}\!^2(\BU^\alpha_\CK)^\vee\otimes\big((\CK_i\boxtimes\CK_j)(\Delta_{ij})\big)\]
(arising from $\CO_{E\times E}\hookrightarrow\CO_{E\times E}(\Delta_{ij})$), and
$s$ is defined as the image of $\sigma$ under this embedding.

Thus the family $^C\ol{Z}{}^\alpha_\CK\subset\BP\BU^\alpha_\CK\to E\times E$ has fibers
$\BP^1\times\BP^1\subset\BP^3$ (smooth quadrics) away from the diagonal
$\Delta_{ij}\subset E\times E$ that degenerate to $\BP^2\cup_{\BP^1}\BP^2\subset\BP^3$
(singular reducible quadrics) over the diagonal $\Delta_{ij}$.

We choose an analytic neighbourhood $W$ of a point $e\in E$ with coordinate $w$,
and trivialize the line bundles \[\big((\CK_i^{-1}\boxtimes\CO_W)(-\Delta_{ij})\big)|_{W\times W},\
\big(\CO_W\boxtimes\CK_j^{-1}\big)|_{W\times W},\ \big(\CK_i^{-1}\otimes\CK_j^{-1}\big)|_{W\times W}\]
compatibly. We denote the coordinates along fibers of these trivialized line bundles by
$y_i,y_j,y_{ij}$ respectively.
Then $^C\!\oZ^\alpha_\CK|_{W\times W}\subset W\times W\times\BA^3$ is cut
out by a single equation $y_iy_j-y_{ij}(w_1-w_2)=0$ and an open condition $y_{ij}\ne0$.

\section{Elliptic Coulomb branch of a quiver gauge theory}
\label{piat}
In this section we discuss the elliptic analogue of the construction~\cite{bfn1} of the Coulomb
branch of a gauge theory. This construction made use of equivariant Borel-Moore homology of a certain
variety of triples, and we replace the Borel-Moore homology with its elliptic version.
The results of this section are not used in the rest of the paper, and serve as a motivation only.
We consider a quiver $Q=(Q_0,Q_1)$ with the set of vertices $Q_0$ and the set of arrows $Q_1$.
We use the following notation for the Laurent series field and the Taylor series ring:
$\bF=\BC\dprts{t}\supset\BC\dbkts{t}=\bO$.

\subsection{Basics}
Let $V=\oplus_{i\in Q_0}V_i,\ W=\oplus_{i\in Q_0}W_i$ be finite dimensional $Q_0$-graded complex
vector spaces. The group $\sG=\GL(V)=\prod_{i\in Q_0}\GL(V_i)$ acts naturally on
$\bN=\bigoplus_{i\in Q_0}\Hom(W_i,V_i)\oplus\bigoplus_{(i\to j)\in Q_1}\Hom(V_i,V_j)$.
The construction of~\cite[\S2(i)]{bfn1} associates to this representation of $\sG$ the {\em variety
  of triples} $\CR$ contained in an infinite rank vector bundle $\CT$ over $\Gr_\sG$.
We consider the equivariant elliptic Borel-Moore homology ring $H^{\sG_\bO}_\elli(\CR)$.

A few words about the latter notion are in order. A theory of $\sG$-equivariant elliptic
{\em cohomology} with values in quasicoherent sheaves of algebras over the moduli space of
semistable
$\sG$-bundles over $E$ was proposed in~\cite{gro,gkv}. After the proposal of~\cite{gro,gkv},
quite a few foundational papers appeared establishing the basic properties of equivariant
elliptic cohomology. We will use~\cite{gan} as a reference. For one thing, we restrict ourselves
to a product of general linear groups $\sG$ since the centralizers of commuting pairs in
$\sG$ are connected, and the base change in equivariant elliptic cohomology holds
true~\cite[Theorem 4.6, Corollary 4.10]{gan}.

Now the equivariant elliptic {\em Borel-Moore homology} $H^{\sG_\bO}_\elli(X)$ is defined
as $W$-invariants in the Cartan torus equivariant elliptic Borel-Moore homology, and these
in turn are defined by descent from the usual equivariant Borel-Moore homology or the equivariant
homological $K$-theory as in~\cite[\S3.3]{gan}. The details of the construction are to appear
in a forthcoming work of I.~Perunov and A.~Prikhodko.

We set $a_i=\dim V_i$, so that
$\alpha=\sum_{i\in Q_0}a_i\alpha_i\in\Lambda_{\on{pos}}$ is a positive coroot combination of the Kac-Moody
Lie algebra $\fg$ with Dynkin diagram $Q$.
Then the equivariant elliptic cohomology $H^\elli_{\sG_\bO}(\pt)=\CO_{E^\alpha}$,
where $E^\alpha=\prod_{i\in I}E^{(a_i)}$. The equivariant
elliptic Borel-Moore homology $H_\elli^{\sG_\bO}(\CR)$ is a quasicoherent sheaf of commutative
$\CO_{E^\alpha}$-algebras by construction of~\cite[\S3]{bfn1}. 
Its relative spectrum is denoted $\CM_C^\elli=\CM_C^\elli(\sG,\bN)$:
the elliptic Coulomb branch. By construction, $\CM_C^\elli$ is equipped with an affine morphism
$\varPi\colon\CM_C^\elli\to E^\alpha$.

\subsection{Compactified elliptic Coulomb branch}
From now on we assume that $Q$ is an oriented Dynkin diagram of an almost simple simply connected
simply laced complex algebraic group $G$. We also assume that $W=0$. We will denote $Q_0$ by $I$
to match the notation of Sections~\ref{dva},\ref{tri}.

As in~\cite[\S3(ii)]{bfn2}, we consider the subalgebra
$H_\elli^{\sG_\bO}(\CR^+)\subset H_\elli^{\sG_\bO}(\CR)$ (homology supported over the positive part
of the affine Grassmannian $\Gr^+_\sG\subset\Gr_\sG$), and its relative spectrum
$\CM_C^{\elli,+}\stackrel{\varPi}{\longrightarrow}E^\alpha$.
By construction, we have an open embedding $\CM_C\subset\CM_C^{\elli,+}$ of varieties over $E^\alpha$.

As in~\cite[Remark 3.7]{bfn2}, we define a certain {\em support multifiltration}
$F_\bullet H_\elli^{\sG_\bO}(\CR^+)$ numbered by the monoid $\Lambda^\vee_{\on{pos}}$ of nonnegative
integral combinations of positive roots of $G$.
The (multi)projective spectrum of its Rees algebra is
denoted $\ol\CM{}_C^\elli$: the {\em compactified elliptic Coulomb branch}. By construction,
it is equipped with a projective morphism $\varPi\colon\ol\CM{}_C^\elli\to E^\alpha$.
Also we have an open embedding $\CM_C^{\elli,+}\subset\ol\CM{}_C^\elli$ of varieties over $E^\alpha$.

By definition, \[F_{\sum_{i\in I}\alpha^\svee_i}H_\elli^{\sG_\bO}(\CR^+)=
\bigoplus_{\Lambda_{\on{pos}}\ni\beta=\sum b_i\alpha_i\leq\alpha}
H_\elli^{\sG_\bO}\big(\CR^+_{\sum_{i\in I}\varpi_{i,b_i}}\big)\]
(elliptic homology of the preimage in $\CR^+$ of all the fundamental $\sG_\bO$-orbits in $\Gr^+_\sG$;
here $\varpi_{i,n}$ stands for the $n$-th fundamental coweight of $\GL(V_i)$; in particular,
$\varpi_{i,0}=0$ and $\varpi_{i,a_i}=(1,\ldots,1)$). All the fundamental $\sG_\bO$-orbits in
$\Gr^+_\sG$ are closed; more precisely, $\Gr_{\GL(V_i)}^{\varpi_{i,n}}\cong\Gr(n,a_i)$ (the Grassmannian
of $n$-dimensional subspaces in $V_i$). We have
\[H^\elli_{\GL(V_i,\bO)}\big(\Gr(b_i,a_i)\big)=\bq_*(\CO_{E^{(b_i)}\times E^{(a_i-b_i)}})\] (the sheaf of elliptic
   {\em cohomology} on $E^{(a_i)}$, notation of~\S\ref{mir com}), and dually,
\[H_\elli^{\GL(V_i,\bO)}\big(\Gr(b_i,a_i)\big)=\big(\bq_*(\CO_{E^{(b_i)}\times E^{(a_i-b_i)}})\big)^\vee\]
(elliptic {\em homology}). It follows that for $\beta\leq\alpha$ and
$\gamma:=\alpha-\beta$ we have \[H_\elli^{\sG_\bO}(\CR^+_{\sum_{i\in I}\varpi_{i,b_i}})=
\left(\bq_*\Big(\CO_{E^\beta\times E^\gamma}\big(\sum_{h\in Q_1}\Delta_{\on{o}(h)\on{i}(h)}^{\beta,\gamma}\big)\Big)\right)^\vee\]
(notation of~\S\ref{mir com}; note that the divisor $\Delta_{ij}^{\beta,\gamma}$ in
$E^\beta\times E^\gamma$ is the pullback of the corresponding divisor in $E^\alpha$, so that the
twisting and pushforward commute by the projection formula). The twisting arises from the
elliptic analogue of~\cite[Theorem 4.1]{bfn1} and localization in elliptic homology, reducing
the calculation to the toric case.

All in all, we obtain a canonical isomorphism
$F_{\sum_{i\in I}\alpha^\svee_i}H_\elli^{\sG_\bO}(\CR^+)=(\BU^\alpha)^\vee$ (notation of~\S\ref{coul zas}, where
we set $\BU^\alpha:=\BU_\CK^\alpha$ for trivial line bundles $\CK_i=\CO_E$).
It induces a morphism $\varTheta\colon\ol\CM{}^\elli_C\to\BP\BU^\alpha$.

\begin{thm}
  \label{zas vs branch}
  \textup{(a)} $\varTheta$ is a closed embedding, and its image is $^C\ol{Z}{}^\alpha$
  (where we set $^C\ol{Z}{}^\alpha:={}^C\ol{Z}{}^\alpha_\CK$ for trivial line bundles $\CK_i=\CO_E$).

  \textup{(b)} The isomorphism $\varTheta\colon\ol\CM{}^\elli_C\iso{}^C\ol{Z}{}^\alpha$
  restricts to the same named isomorphism of the open subvarieties
  $\CM^{\elli,+}_C\iso{}^C\!Z^\alpha$.

  \textup{(c)} The isomorphism $\varTheta\colon\ol\CM{}^\elli_C\iso{}^C\ol{Z}{}^\alpha$
  restricts to the same named isomorphism of the open subvarieties
  $\CM^\elli_C\iso{}^C\!\oZ^\alpha$.
\end{thm}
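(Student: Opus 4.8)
The plan is to transport the argument of \thmref{mir thm} to the Coulomb side, using that the main localization computation has already been carried out in \S\ref{piat}, where we produced the canonical isomorphism $F_{\sum_{i\in I}\alpha^\svee_i}H^{\sG_\bO}_\elli(\CR^+)=(\BU^\alpha)^\vee$ defining $\varTheta\colon\ol\CM{}^\elli_C\to\BP\BU^\alpha$, together with the identification of each summand $H^{\sG_\bO}_\elli(\CR^+_{\sum_i\varpi_{i,b_i}})$ with the corresponding summand of $(\BU^\alpha)^\vee$. Since $E^\alpha=\prod_{i\in I}E^{(a_i)}$ is projective and both $\ol\CM{}^\elli_C$ and $\BP\BU^\alpha$ are projective over $E^\alpha$, the morphism $\varTheta$ is proper, so its image is closed. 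Moreover $\ol\CM{}^\elli_C$ is integral: it is the multiprojective spectrum of the $\Lambda^\vee_{\on{pos}}$-graded Rees algebra $\bigoplus_{\mu^\svee}F_{\mu^\svee}H^{\sG_\bO}_\elli(\CR^+)$, which is a sheaf of integral domains because the subalgebra $H^{\sG_\bO}_\elli(\CR^+)\subset H^{\sG_\bO}_\elli(\CR)$ is (the integrality of $H^{\sG_\bO}_\elli(\CR)$ being the elliptic analogue of the integrality of Coulomb branches, \cite{bfn1}), and the multiprojective spectrum of a multigraded sheaf of domains is integral. Thus the nonreducedness phenomenon that complicated \thmref{mir thm} does not arise here.

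Next I would determine the image of $\varTheta$ by working over the disjoint locus $E^\alpha_{\on{disj}}$ and invoking factorization. The variety of triples $\CR^+$, the support multifiltration $F_\bullet$, and hence the Rees algebra, all factorize, so their pullbacks to $(\prod_{i\in I}E^{a_i})_{\on{disj}}$ split as external products over $i\in I$ and over the $a_i$ coordinates of rank-one contributions; the rank-one compactified elliptic Coulomb branch over $E$ is the ruled surface $\BP(\CK_i\oplus\CO_E)={}^C\ol{Z}{}^{\alpha_i}_\CK$ (the elliptic incarnation of the computation of \S\ref{A1 cou}, obtained from the localization formulas of \S\ref{piat}), with $\varTheta$ the identity on it. Descending along the Galois $S_\alpha$-cover $p^\alpha$, the restriction of $\varTheta$ over $E^\alpha_{\on{disj}}$ is the Segre embedding of $L:=\big(\prod_{i\in I}\BP(\CK_i\oplus\CO_E)^{a_i}|_{(\prod_{i\in I}E^{a_i})_{\on{disj}}}\big)/S_\alpha$ into $\BP\BU^\alpha|_{E^\alpha_{\on{disj}}}$, i.e.\ the locally closed subvariety whose closure is $^C\ol{Z}{}^\alpha$ by Definition~\ref{coul def}(a). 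Since $\ol\CM{}^\elli_C$ is irreducible and dominates $E^\alpha$ (already its dense open $\CM^\elli_C$ does), the image of $\varTheta$ is an irreducible closed subset of $\BP\BU^\alpha$ whose generic point lies over the generic point of $E^\alpha$; as this image coincides with $L$ over $E^\alpha_{\on{disj}}$, it has the same generic point as $\ol{L}={}^C\ol{Z}{}^\alpha$, hence equals $^C\ol{Z}{}^\alpha$.

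It remains to upgrade the proper surjection $\varTheta\colon\ol\CM{}^\elli_C\to{}^C\ol{Z}{}^\alpha$ to a closed \emph{embedding}. This is the elliptic counterpart of the corresponding statement of \cite[\S3]{bfn2}: it amounts to the assertion that the Rees algebra $\bigoplus_{\mu^\svee}F_{\mu^\svee}H^{\sG_\bO}_\elli(\CR^+)$ is generated over $\CO_{E^\alpha}=H^\elli_{\sG_\bO}(\pt)$ by its multidegree-$\sum_{i\in I}\alpha^\svee_i$ piece $(\BU^\alpha)^\vee$, so that $\ol\CM{}^\elli_C$ is cut out inside $\BP\BU^\alpha$ exactly by the relations among the corresponding homogeneous coordinates. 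I expect this step --- the ring-theoretic control over the elliptic Borel--Moore homology $H^{\sG_\bO}_\elli(\CR^+)$ and its support multifiltration --- to be the main obstacle; it depends on the forthcoming foundational work of I.~Perunov and A.~Prikhodko, after which the arguments of \cite[\S3]{bfn2} should carry over. Granting it, part (a) follows by combining it with the previous paragraph.

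Finally, (b) and (c) are obtained by matching open charts. Under $F_{\sum_{i\in I}\alpha^\svee_i}H^{\sG_\bO}_\elli(\CR^+)=(\BU^\alpha)^\vee=\bigoplus_{\beta+\gamma=\alpha}H^{\sG_\bO}_\elli(\CR^+_{\sum_i\varpi_{i,b_i}})$, the two distinguished line-bundle summands of $\BU^\alpha$ --- those indexed by $(\beta,\gamma)=(\alpha,0)$ and by $(\beta,\gamma)=(0,\alpha)$ --- correspond respectively to the top fundamental $\sG_\bO$-orbit and to the base point of $\Gr^+_\sG$. The subbundles $\BP\BU^{\alpha,\on{up}}_\CK$ and $\BP\BU^\alpha_{\CK,\on{low}}$ of Definition~\ref{coul def} (with all $\CK_i=\CO_E$) are the vanishing loci of the two corresponding homogeneous coordinates, and removing them realizes on the Coulomb side the passage from $\ol\CM{}^\elli_C$ to its open chart $\CM^{\elli,+}_C=\Spec_{E^\alpha}H^{\sG_\bO}_\elli(\CR^+)$ and then to the further open $\CM^\elli_C=\Spec_{E^\alpha}H^{\sG_\bO}_\elli(\CR)$. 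As every scheme involved is reduced and these identifications can be verified over the dense open $E^\alpha_{\on{disj}}$ by factorization, they hold globally, yielding the restricted isomorphisms $\CM^{\elli,+}_C\iso{}^C\!Z^\alpha$ and $\CM^\elli_C\iso{}^C\!\oZ^\alpha$.
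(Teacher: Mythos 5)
Your overall architecture matches the paper's: define $\varTheta$ via the identification $F_{\sum_{i\in I}\alpha^\svee_i}H^{\sG_\bO}_\elli(\CR^+)=(\BU^\alpha)^\vee$, identify its image over $E^\alpha_{\on{disj}}$ with the Segre-embedded open part via factorization and the rank-one computation, and deduce (b), (c) by matching the two distinguished summands with the upper and lower boundaries. That part is fine.

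However, there is a genuine gap at exactly the point you flag as "the main obstacle": you do not prove that the Rees algebra of the support multifiltration is generated over $\CO_{E^\alpha}$ by its multidegree-$\sum_{i\in I}\alpha^\svee_i$ piece; you only say you "expect" it and proceed "granting it." Since this is precisely the statement that makes $\varTheta$ a closed embedding rather than merely a proper map, leaving it unproven leaves part (a) unproven. The paper closes this gap with a short two-step argument that does not require any further foundational input beyond what is already assumed in Sect.~\ref{piat}: first, the analogous generation statement for the \emph{ordinary} equivariant Borel--Moore homology $H^{\sG_\bO}_*(\CR^+)$ follows from the argument of \cite[Proposition~6.8]{bfn1} (the ring is generated by the classes supported over the fundamental $\sG_\bO$-orbits, hence the Rees algebra is generated by $F_{\sum_i\alpha^\svee_i}$); second, equivariant elliptic cohomology coincides with ordinary equivariant cohomology locally in the analytic topology of $E^\alpha$, and generation of a sheaf of algebras by a subsheaf is a local statement on $E^\alpha$, so it transfers verbatim to $H^{\sG_\bO}_\elli(\CR^+)$. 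You should supply this reduction rather than defer it to forthcoming work. A secondary remark: to conclude that the image of the closed embedding equals the closure $^C\ol{Z}{}^\alpha$ of the generic part you need $\ol\CM{}^\elli_C$ to have no components supported over the diagonals; your integrality argument for the Rees algebra is one way to see this, and is a reasonable (if slightly more elaborate) substitute for what the paper leaves implicit.
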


\begin{proof}
  We consider the usual equivariant Borel-Moore homology ring $H^{\sG_\bO}_*(\CR^+)$. The argument in the
  proof of~\cite[Proposition 6.8]{bfn1} demonstrates that this ring is generated by
  $\bigoplus_{\Lambda_{\on{pos}}\ni\beta=\sum b_i\alpha_i\leq\alpha}
  H^{\sG_\bO}_*\big(\CR^+_{\sum_{i\in I}\varpi_{i,b_i}}\big)$.
  It follows that the corresponding Rees algebra is generated by
  $F_{\sum_{i\in I}\alpha^\svee_i}H^{\sG_\bO}_*(\CR^+)$. Since the elliptic cohomology coincides with the usual
  cohomology locally in the analytic topology of $E^\alpha$, it follows that the Rees algebra of
  $H^{\sG_\bO}_\elli(\CR^+)$ is generated by $F_{\sum_{i\in I}\alpha^\svee_i}H^{\sG_\bO}_\elli(\CR^+)$.
  Hence $\varTheta$ is a closed embedding. The image of $\varTheta$ over the complement to diagonals
  in $E^\alpha$ is readily identified with
  $\big(\prod_{i\in I}(E\times\BP^1)^{a_i}|_{(\prod_{i\in I}E^{a_i})_{\on{disj}}}\big)/S_\alpha$. We conclude that
  the image of the closed embedding $\varTheta$ coincides with $^C\ol{Z}{}^\alpha$.
  This completes the proof of (a), and (b,c) follow immediately.
  \end{proof}

\section{Reduced elliptic zastava}
\label{chetyr}

\subsection{Poisson structure}
\label{poisson}
According to~\S\ref{coul zas}, $^C\ol{Z}{}^\alpha_\CK$ contains an open smooth subvariety
$U_\alpha:=\big(\prod_{i\in I}\BP(\CK_i\oplus\CO_E)^{a_i}|_{(\prod_{i\in I}E^{a_i})_{\on{disj}}}\big)/S_\alpha$.
It has a covering
$\widetilde{U}_\alpha:=\prod_{i\in I}\BP(\CK_i\oplus\CO_E)^{a_i}|_{(\prod_{i\in I}E^{a_i})_{\on{disj}}}$,
an open subvariety of the product of the ruled surfaces
$\ol{U}\!_\alpha:=\prod_{i\in I}\BP(\CK_i\oplus\CO_E)^{a_i}$. Each ruled surface
$\BP(\CK_i\oplus\CO_E)$ contains an open subvariety $\oS_{\CK_i}$ (notation of~\S\ref{sl2 vs hilb}).
The canonical class of $\oS_{\CK_i}$ is trivial, and the trivialization is defined uniquely
by our choice of trivialization of the canonical bundle $\bomega_E$, see~Remark~\ref{triv can}.
In other words, $\oS_{\CK_i}$ carries a canonical symplectic form $\omega_{\CK_i}$.
More explicitly, we can trivialize $\CK_i$ \'etale locally and choose a function $w$ on $E$
such that $dw$ is the trivialization of $\bomega_E$ (Remark~\ref{triv can}).
Let $(w,y)$ be the corresponding
\'etale local coordinates on $\oS_{\CK_i}$ such that $y$ is invertible. We define the Poisson
bracket setting $\{y,x\}_{\CK_i}=y$. For this bracket we have $\{f(w)y,w\}_{\CK_i}=f(w)y$.
It follows that the brackets on the intersections of coordinate patches are all compatible,
so they give rise to a global bracket arising from a symplectic form $\Omega_{\CK_i}$.

Note that $\Omega_{\CK_i}$ is invariant with respect to the action of $\BC^\times$ by
fiberwise dilations. Note also that the symplectic structure on $\oS_{\CK_i}$ extends as a
Poisson structure to $\BP(\CK_i\oplus\CO_E)$ (vanishing along the zero and infinite sections).
Finally, the product Poisson structure on $\ol{U}\!_\alpha$ is clearly $S_\alpha$-invariant,
so by descent we obtain a Poisson structure on $U_\alpha$, to be denoted $\{,\}^\alpha_\CK$.

It is likely that the Poisson structure $\{,\}^\alpha_\CK$ on $U_\alpha$ extends as a Poisson
structure to the Coulomb compactified zastava $^C\ol{Z}{}^\alpha_\CK$. However, the proof would
require the normality property of $^C\ol{Z}{}^\alpha_\CK$ that we do not know at the moment.
Instead we restrict to an open subset $U^\circ_\alpha\subset U_\alpha$ removing the~0 and $\infty$
sections of the surface $\BP(\CK_i\oplus\CO_E)$.

\begin{prop}
  \label{pois}
  The Poisson structure $\{,\}^\alpha_\CK$ on $U^\circ_\alpha$ extends to a Poisson
  structure $\{,\}_\CK$ on Coulomb open zastava $^C\!\oZ^\alpha_\CK\subset{}^C\ol{Z}{}^\alpha_\CK$.
  Moreover the latter Poisson structure is symplectic.
  
\end{prop}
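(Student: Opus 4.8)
The plan is to produce an algebraic symplectic form $\Omega_\CK$ on $^C\!\oZ^\alpha_\CK$ restricting on the dense open $U^\circ_\alpha$ to the symplectic form $\omega$ underlying $\{,\}^\alpha_\CK$; then $\{,\}_\CK:=\Omega_\CK^{-1}$ is automatically the desired extension. The first ingredient is that $^C\!\oZ^\alpha_\CK$ is \emph{smooth}. After restricting to configurations supported in a small analytic (or \'etale) disc on $E$ and trivializing the $\CK_i$ and $\bomega_E$ there, all the twisting line bundles appearing in \eqref{coul bun} become trivial --- over $\BA^1$ the divisors $\Delta_{ii}^\beta$ and $\Delta^{\beta,\gamma}_{\on{o}(h)\on{i}(h)}$ are cut out by global functions (discriminants and resultants) --- so the Coulomb construction of \S\ref{coul zas} and the Mirkovi\'c construction of \S\ref{mir com} agree locally on $E$; together with the factorization property this shows $^C\!\oZ^\alpha_\CK$ is \'etale-locally isomorphic to the open elliptic zastava $\oZ^\alpha_\CK$, which is smooth by \remref{reduced zastava} (and by \thmref{mir thm} to $^{\on{Mir}}\!\oZ^\alpha_\CK$). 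Granting smoothness, $U^\circ_\alpha={}^C\!\oZ^\alpha_\CK\cap(\pi^\alpha)^{-1}(E^\alpha_{\on{disj}})$, so its complement $\partial:=(\pi^\alpha)^{-1}(E^\alpha\setminus E^\alpha_{\on{disj}})$ is the preimage of an effective divisor, hence pure of codimension one; its components lie over the within-colour discriminants $\Delta^\alpha_{ii}$ and the cross-colour incidence divisors $\Delta^\alpha_{ij}$.

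Since the sheaf of $2$-forms on the smooth variety $^C\!\oZ^\alpha_\CK$ is locally free (so its sections extend across loci of codimension $\ge 2$), the rational $2$-form $\omega$ extends to a regular $2$-form on $^C\!\oZ^\alpha_\CK$ as soon as it has no pole at the generic point of each component of $\partial$; likewise nondegeneracy, being the non-vanishing of a section of the canonical bundle, need only be checked there. A generic point of a component of $\partial$ lies over the generic point of some $\Delta^\alpha_{ij}$, i.e.\ over a configuration in which exactly one colour-$i$ point and one colour-$j$ point collide (with $i=j$ allowed) while all remaining points stay disjoint. By the factorization property of $^C\!\oZ^\alpha_\CK$ and of the product Poisson structure $\{,\}^\alpha_\CK$, an \'etale neighbourhood of such a point splits as $^C\!\oZ^{\beta}_\CK\times\big(\text{Coulomb zastava over a disjoint configuration}\big)$, with $\beta=2\alpha_i$ or $\beta=\alpha_i+\alpha_j$; on the second factor the Poisson structure is the product of copies of the symplectic forms $\Omega_{\CK_k}$ on $\oS_{\CK_k}$ from \S\ref{poisson}, where there is nothing to check, so everything reduces to the two rank-$\le 2$ situations.

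For $\beta=2\alpha_i$ (the $A_1$ case) I invoke \eqref{hilb vs coul}: $^C\!\oZ^{2\alpha_i}_\CK\simeq\Hilb^2_\tr(\oS_{\CK_i})$ is an open subscheme of $\Hilb^2(\oS_{\CK_i})$, which is smooth and, being the Hilbert scheme of the smooth surface $\oS_{\CK_i}$ equipped with the algebraic symplectic form $\Omega_{\CK_i}$ of \S\ref{poisson}, carries the Beauville symplectic form --- characterized on the locus of distinct pairs as the descent of $\pr_1^*\Omega_{\CK_i}+\pr_2^*\Omega_{\CK_i}$, which is precisely $\{,\}^\alpha_\CK$ on $U^\circ_\alpha$ --- so $\omega$ extends symplectically across $\Delta^\alpha_{ii}$. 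For $\beta=\alpha_i+\alpha_j$ with $i,j$ joined by an edge (the non-adjacent case being a plain product, hence trivial) I use the explicit local model of \S\ref{A2 cou}: on $W\times W$ the space $^C\!\oZ^{\alpha_i+\alpha_j}_\CK$ is $\{y_iy_j=y_{ij}(w_1-w_2),\ y_{ij}\ne0\}$; in the chart $\{y_i\ne0\}$ the functions $(w_1,w_2,y_i,y_{ij})$ are coordinates on $W\times W\times\BA^\times\times\BA^\times$, and this chart contains the generic point of one of the two boundary components over $\Delta^\alpha_{ij}$ (the other handled symmetrically in $\{y_j\ne0\}$). Writing the product form $\tfrac{dY_i\wedge dw_1}{Y_i}+\tfrac{dY_j\wedge dw_2}{Y_j}$ in terms of the honest ruled-surface fibre coordinates $Y_i=y_i/(w_1-w_2)$ and $Y_j=y_{ij}(w_1-w_2)/y_i$ on $\oS_{\CK_i}$ and $\oS_{\CK_j}$, the logarithmic poles $\tfrac{d(w_1-w_2)}{w_1-w_2}$ cancel and one gets $\tfrac{dy_i}{y_i}\wedge(dw_1-dw_2)+\tfrac{dy_{ij}}{y_{ij}}\wedge dw_2$, regular and nondegenerate on the whole chart. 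Having thus extended $\omega$ to a regular $2$-form on the smooth $^C\!\oZ^\alpha_\CK$, closedness propagates from the dense $U^\circ_\alpha$ (a regular $3$-form vanishing on a dense open is zero) and nondegeneracy follows from the codimension-one checks above, so $\Omega_\CK$ is symplectic and $\{,\}_\CK:=\Omega_\CK^{-1}$ is the required extension of $\{,\}^\alpha_\CK$.

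I expect the main obstacle to be the coordinate bookkeeping in the $A_2$ chart: pinning down exactly how the Coulomb coordinates $y_i,y_j,y_{ij}$ of \S\ref{A2 cou}, which incorporate the $\Delta_{ij}$-twist, relate to the honest fibre coordinates on $\BP(\CK_i\oplus\CO_E)$ and $\BP(\CK_j\oplus\CO_E)$, so that the cancellation of the $\tfrac{d(w_1-w_2)}{w_1-w_2}$ poles --- hence the regularity of $\omega$ across $w_1=w_2$ --- is unambiguous; and, on the smoothness side, checking that the \'etale-local identifications are genuinely compatible with the factorization structure and the projective embedding into $\BP\BU^\alpha_\CK$, so that $^C\!\oZ^\alpha_\CK$ really is \'etale-locally isomorphic to $\oZ^\alpha_\CK$.
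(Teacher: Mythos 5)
Your overall strategy --- smoothness of $^C\!\oZ^\alpha_\CK$, then extension of the $2$-form across codimension one checked at generic points of the boundary divisors, factorization down to $|\beta|=2$, and the explicit $A_1$/$A_2$ local models --- is the same as the paper's, and your $A_2$ chart computation (the cancellation of the $\tfrac{d(w_1-w_2)}{w_1-w_2}$ poles for $Y_i=y_i/(w_1-w_2)$, $Y_j=y_{ij}(w_1-w_2)/y_i$) is correct and in fact more explicit than the paper's appeal to \cite{fkmm} and \cite[Remark 2.2]{bfn2}. The gap is in the very first step, on which everything else rests. The claim that the Coulomb construction of \S\ref{coul zas} and the Mirkovi\'c construction of \S\ref{mir com} ``agree locally on $E$'' because the twisting line bundles become trivial over a disc is not correct. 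Trivializing the $\CO(\Delta)$'s over $W^\alpha$ does give an abstract isomorphism of the ambient bundles $\BV^\alpha_\CK|_{W^\alpha}\cong\BU^\alpha_\CK|_{W^\alpha}$, but the only bundle map intertwining the two Segre loci over the disjoint locus is multiplication, summand by summand, by the canonical sections of those $\CO(\Delta)$'s --- functions \emph{vanishing} on the diagonals --- and this map fails to be an isomorphism precisely over the diagonals, so it does not identify the closures. Already for $A_1$ with $a=2$ it is the map $\bq_*\bp^*\CK_i\to\bq_*\bigl(\bp^*\CK_i(\Delta^{1,1})\bigr)$ given by $(w_1-w_2)$, which degenerates over $\Delta^2$; this is exactly why $Z^a_\CK$ is the total space of $\CT_\CK$ (Proposition \ref{hilb vs zas}(b)) while $^C\!Z^a_\CK$ is the total space of $\CU_\CK$ --- two genuinely different local spaces over $E^{(a)}$. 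So you have not established smoothness of $^C\!\oZ^\alpha_\CK$, and without it neither the Hartogs extension nor the statement of nondegeneracy is available.

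The conclusion you want (that $^C\!\oZ^\alpha_\CK$ is locally over $E^\alpha$ isomorphic to a known smooth zastava) is true, but it is a substantive input rather than a formal consequence of trivializing line bundles: the paper obtains it by combining Theorem \ref{zas vs branch} (identifying the Coulomb zastava with the elliptic Coulomb branch) with \cite[Theorem 3.1]{bfn2} (identifying the Coulomb branch of the quiver gauge theory with the \emph{rational} open zastava), which yields $^C\!\oZ^\alpha_\CK|_{W^\alpha}\simeq\oZ^\alpha|_{W^\alpha}$ and hence smoothness. If you import that identification (or give an independent smoothness proof, e.g.\ exhibiting $^C\!\oZ^\alpha_\CK$ \'etale-locally as a torsor over a smooth group scheme, as one can in type $A_1$), the remainder of your argument goes through.
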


\begin{proof}
  The construction of Coulomb zastava being local, we can restrict our consideration to
  $^C\ol{Z}{}^\alpha_\CK|_{W^\alpha}$ where $W$ is an analytic open subset of $E$ with a global
  coordinate $w$ whose differential $dw$ coincides with the trivialization of $\bomega_E$
  (Remark~\ref{triv can});
  thus we fix an open analytic embedding $W\hookrightarrow\BA^1$.
  We can also trivialize all the line bundles $\CK_i|_W$.
  Combining~Theorem~\ref{zas vs branch} with~\cite[Theorem 3.1]{bfn2} we obtain
  an isomorphism between $^C\!\oZ^\alpha_\CK|_{W^\alpha}$ and $\oZ^\alpha|_{W^\alpha}$. Here
  $\oZ^\alpha\to\BA^\alpha$ is the usual open zastava studied in~\cite{bfn2}.
  In particular, the smoothness of $\oZ^\alpha$ implies the smoothness of $^C\!\oZ^\alpha_\CK$.

  In order to check that the rational Poisson structure $\{,\}^\alpha_\CK$ is symplectic on the
  Coulomb open zastava, it suffices to do this over the generic points of diagonals in $E^\alpha$
  (equivalently, over the generic points of diagonals in $W^\alpha$). The factorization isomorphism
  \[^C\!\oZ^\alpha_\CK|_{(E^\beta\times E^\gamma)_{\on{disj}}}
  \simeq({}^C\!\oZ^\beta_\CK\times{}^C\!\oZ^\gamma_\CK)|_{(E^\beta\times E^\gamma)_{\on{disj}}}\]
  is Poisson by construction. Hence it suffices to check the symplectic property of the
  Poisson structure over the generic points of diagonals in $E^\beta$ (equivalently,
  over the generic points of diagonals in $W^\beta$) for $|\beta|=2$.

  There are 3 cases to consider.
  If $\beta=\alpha_i+\alpha_j$, and $i,j$ are not connected by an arrow, there is nothing to
  check. If $\beta=\alpha_i+\alpha_j$, and $i,j$ are connected by an arrow $i\to j$, then
  the Coulomb open zastava over $W^\beta$ with its Poisson structure is nothing but the
  restriction of the rational open zastava $\oZ^\beta$ (for the group $\SL(3)$) with its
  Poisson structure to $W^\beta$. The latter one is symplectic e.g.\ by~\cite{fkmm}.
  More precisely, comparing (the last line of)~\S\ref{A2 cou} with e.g.~\cite[Remark 2.2]{bfn2} we
  get an explicit identification between the Coulomb open zastava $^C\!\oZ^\beta_\CK|_{W^\alpha}$ and
  the rational open zastava $\oZ^\beta|_{W^\alpha}$ sending
  $\{,\}^\beta_\CK$ to the standard Poisson structure on $\oZ^\beta|_{W^\alpha}$.
  If $\beta=2\alpha_i$, the identification of~\S\ref{A1 cou} and~\S\ref{sl2 vs hilb}
  between $^C\!\oZ^\beta_\CK$ and the corresponding Hilbert scheme sends $\{,\}^\beta_\CK$
  to the standard Poisson (symplectic) structure on the Hilbert scheme.

  This completes the proof of the proposition. 
\end{proof}

\subsection{Hamiltonian reduction}
\label{ham red}
We assume that $a_i>0$ for any $i\in I$. Let $T^{\on{ad}}$ act on $\CK_i$ via the homomorphism
$\alpha^\svee_i\colon T^{\on{ad}}\to\BC^\times$ and the fiberwise dilation action of $\BC^\times$
on $\CK_i$. Clearly, this action extends to a fiberwise action on $\BP(\CK_i\oplus\CO_E)$.
Furthermore, for any decomposition $\alpha=\beta+\gamma$ (where $\beta=\sum_{i\in I}b_i\alpha_i$),
the fiberwise action of $T^{\on{ad}}$ on $\CK_i$ induces its action on $\CK^\beta$ and hence on
the vector bundle $\bq_*\left(\bp^*\CK^\beta
\otimes\CO_{E^\beta\times E^\gamma}\big(\sum_{h\in Q_1}\Delta_{\on{o}(h)\on{i}(h)}^{\beta,\gamma}\big)\right)$.
Clearly, the resulting actions of $T^{\on{ad}}$ on
$U^\alpha$ (see~\S\ref{poisson}) and on $\BP\BU^\alpha_\CK|_{E^\alpha_{\on{disj}}}$ are compatible.
This way $^C\!\oZ^\alpha_\CK$ acquires an effective hamiltonian action of $T^{\on{ad}}$.

We have the Abel-Jacobi morphisms $E^{(a_i)}\to\on{Pic}^{a_i}E$ and their product
$\on{AJ}\colon E^\alpha\to\prod_{i\in I}\on{Pic}^{a_i}E$. We denote the composed morphism by
\[\on{AJ}_Z\colon{}^C\!\oZ^\alpha_\CK\to E^\alpha\to\prod_{i\in I}\on{Pic}^{a_i}E.\]
Given a collection $\CalD=(\CalD_i)_{i\in I}\in\on{Pic}^{a_i}E$, we define the
{\em reduced Coulomb open zastava}
$^C_\CalD\uoZ^\alpha_\CK$ as $\on{AJ}_Z^{-1}(\CalD)/T$ (stack quotient,
cf.~Definition~\ref{red zas}). It inherits a Poisson structure from $^C\!\oZ^\alpha_\CK$,
symplectic on the smooth locus of $^C_\CalD\uoZ^\alpha_\CK$.

\begin{thm}
\label{reductio}  
  For $\CalD=(\CalD_i)_{i\in I}\in\on{Pic}^{a_i}E$, the reduced open zastava
  $^{\vphantom{C}}_\CalD\uoZ^\alpha_\CK$ is naturally isomorphic to the reduced Coulomb open zastava
  $^C_\CalD\uoZ^\alpha_{\CK'}$, where
  $\CK'_i:=\CK^{-\alpha^\svee_i}\otimes\CalD_i\otimes\bigotimes_{h\in Q_1: i=\on{o}(h)}\CalD^{-1}_{\on{i}(h)}$.
  \end{thm}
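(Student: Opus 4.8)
The plan is to compare the two sides fiberwise over the configuration space $E^\alpha$, using the Mirković/Coulomb realizations as closures inside projectivized factorizable bundles. First I would recall from Theorem~\ref{mir thm} that $(\ol Z{}^\alpha_\CK)_{\on{red}}\cong{}^{\on{Mir}}\ol Z{}^\alpha_\CK$ sits in $\BP\BV^\alpha_\CK$, and that $^C\ol Z{}^\alpha_{\CK'}$ sits in $\BP\BU^\alpha_{\CK'}$. The key algebraic input is an isomorphism of factorizable vector bundles on $E^\alpha$,
\[
\BV^\alpha_\CK\ \cong\ \BU^\alpha_{\CK'}\otimes\CN
\]
for a suitable line bundle $\CN$ on $E^\alpha$ (not necessarily $S_\alpha$-invariantly trivial, which is exactly why one must pass to the reduced zastava / quotient by $T$), where $\CK'_i:=\CK^{-\alpha^\svee_i}\otimes\CalD_i\otimes\bigotimes_{h\in Q_1:\,i=\on{o}(h)}\CalD^{-1}_{\on{i}(h)}$. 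Concretely, comparing the summands of~\eqref{mir bun} and~\eqref{coul bun} indexed by $\beta+\gamma=\alpha$, one must show
\[
\bq_*\Bigl(\bp^*\bigl(\CK^\beta(\textstyle\sum_i\Delta^\beta_{ii}-\sum_h\Delta^\beta_{\on{o}(h)\on{i}(h)})\bigr)(\textstyle\sum_i\Delta^{\beta,\gamma}_{ii})\Bigr)
\ \cong\
\bq_*\Bigl(\bp^*(\CK')^\beta\otimes\CO(\textstyle\sum_h\Delta^{\beta,\gamma}_{\on{o}(h)\on{i}(h)})\Bigr)\otimes\CN|_{(\dots)}
\]
up to a twist by the pullback of a line bundle from $E^\alpha$. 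This is where Lemma~\ref{well known} ($\bomega^{-2}_{E^{(b)}}\simeq\CO_{E^{(b)}}(\Delta^b)$) and relative Serre–Grothendieck duality for $\bq$ (whose ramification divisor is $\sum_i\Delta^{\beta,\gamma}_{ii}$) enter, exactly as in the type $A_1$ computation of~\S\ref{A1 reg} (Lemmas~\ref{duo} and the line bundle $\CM$ there). The Abel–Jacobi data $\CalD_i$ appear precisely through the identification $\CK^{-\alpha^\svee_i}(D)/\CK^{-\alpha^\svee_i}\simeq(\CK^{-\alpha^\svee_i}\otimes\CalD_i)|_D$ on the Abel–Jacobi fiber, as in the proof of~\propref{hilb vs zas}(c); the $\Delta^{\beta,\gamma}_{\on{o}(h)\on{i}(h)}$-twists in~\eqref{mir bun} versus~\eqref{coul bun} account for the $\bigotimes_{h:\,i=\on{o}(h)}\CalD^{-1}_{\on{i}(h)}$ correction.

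Granting this bundle isomorphism, the next step is to check it is compatible with the factorization structures and restricts, over $E^\alpha_{\on{disj}}$, to the identity on the common open subvariety $\bigl(\prod_{i\in I}\BP(\CK_i\oplus\CO_E)^{a_i}\bigr)/S_\alpha$ — or rather to the corresponding identification of these open pieces after the $\CalD$-twist. Since both $^{\on{Mir}}\ol Z{}^\alpha_\CK$ and $^C\ol Z{}^\alpha_{\CK'}$ are, by Definitions~\ref{mir def} and~\ref{coul def}, the \emph{closures} of (essentially) this same open set inside $\BP\BV^\alpha_\CK$ resp.\ $\BP\BU^\alpha_{\CK'}$, the projective-bundle isomorphism induced by $\BV^\alpha_\CK\cong\BU^\alpha_{\CK'}\otimes\CN$ carries one closure isomorphically to the other. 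This simultaneously matches the open and intermediate versions, since the upper and lower boundaries are cut out by the codimension-one subbundles $\BV^{\alpha,\on{up}}_\CK,\ \BV^\alpha_{\CK,\on{low}}$ (resp.\ $\BU$), which correspond to each other under the same summand-by-summand decomposition.

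Finally I would take the quotient by $T$ (equivalently $T^{\on{ad}}$) over a fixed Abel–Jacobi level $\CalD$. On the elliptic-zastava side the $T$-action is the rescaling of the maps $\xi^{\lambda^\svee}$ from Definition~\ref{ell zas}(5); on the Coulomb side it is the fiberwise-dilation action on the $\CK_i$ described in~\S\ref{ham red}. One checks these two $T$-actions correspond under the isomorphism — they both act through the characters $\alpha^\svee_i$ on the $i$-th factor — and that the line bundle $\CN$, while not $T$-equivariantly trivial on all of $E^\alpha$, becomes equivariantly trivial after restricting to $\on{AJ}^{-1}(\CalD)$ and quotienting (this is the precise role of choosing $\CalD$ as the Abel–Jacobi level: $\CK^{-\alpha^\svee_i}(D)/\CK^{-\alpha^\svee_i}$ becomes $(\CK^{-\alpha^\svee_i}\otimes\CalD_i)|_D$ only up to the scalar ambiguity that the quotient by $\BC^\times$ kills). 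Taking $\on{AJ}_Z^{-1}(\CalD)/T$ on both sides then yields the asserted isomorphism $^{\vphantom{C}}_\CalD\uoZ^\alpha_\CK\cong{}^C_\CalD\uoZ^\alpha_{\CK'}$. The main obstacle I anticipate is the careful bookkeeping in the bundle isomorphism $\BV^\alpha_\CK\cong\BU^\alpha_{\CK'}\otimes\CN$: getting the diagonal twists $\sum_i\Delta^\beta_{ii}-\sum_h\Delta^\beta_{\on{o}(h)\on{i}(h)}$ and the $\bomega^2_{E^{(b)}}$ duality factors to conspire into exactly the stated $\CK'_i$, uniformly in $\beta\leq\alpha$ and compatibly with factorization, rather than any conceptual difficulty.
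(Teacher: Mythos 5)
Your overall strategy---realize both sides as closures of the same open locus inside the projectivized factorizable bundles $\BP\BV^\alpha_\CK$ and $\BP\BU^\alpha_{\CK'}$, match the summands, then quotient by $T$---is indeed the paper's strategy, and your remarks about factorization compatibility and the correspondence of upper/lower boundary subbundles are on target. But the central claim of your plan, namely an isomorphism $\BV^\alpha_\CK\cong\BU^\alpha_{\CK'}\otimes\CN$ for a \emph{single} line bundle $\CN$ on $E^\alpha$ (uniform over the summands $\beta+\gamma=\alpha$), is false, and no choice of $\CN$ can repair it. Twisting a vector bundle by one line bundle never changes its projectivization, so such an isomorphism would yield $\BP\BV^\alpha_\CK\cong\BP\BU^\alpha_{\CK'}$ and hence an isomorphism $\oZ^\alpha_\CK\cong{}^C\!\oZ^\alpha_{\CK'}$ of the \emph{unreduced} spaces, which the paper explicitly rules out (already for $G=\SL(2)$ with $\CK$ trivial the elliptic zastava lives in $TE^{(a)}$ while the Coulomb zastava lives in $T^*E^{(a)}$, and the introduction states ${}^C\!\oZ^a\not\simeq\oZ^a$). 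Concretely, the $\beta=0$ summands of the two bundles are both $\CO_{E^\alpha}$, forcing $\CN\cong\CO_{E^\alpha}$, while the $\beta=\alpha$ summands differ by $\CO\big(\sum_{i}\Delta^\alpha_{ii}-\sum_{h}\Delta^\alpha_{\on{o}(h)\on{i}(h)}\big)\otimes\CK^\alpha\otimes\big((\CK')^\alpha\big)^{-1}$, which is nontrivial in general.

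What actually holds (Proposition~\ref{bundles}) is that the two $\beta$-summands become isomorphic only after restriction to the Abel--Jacobi fibre $\on{AJ}^{-1}(\CalD)\cong\prod_{i\in I}\BP^{a_i-1}$, and then only up to a twist by $\CO(\beta')$ with $\beta'=\sum_i\big(b_i-\sum_{j\to i}b_j\big)\alpha_i$ \emph{depending on the summand}. These summand-dependent twists are precisely what the quotient by $T^{\on{ad}}$ absorbs, since $T^{\on{ad}}$ acts on the $\beta$-summand through the corresponding character; this is why the statement concerns only the reduced zastava, and why the restriction to $\on{AJ}^{-1}(\CalD)$ must be performed \emph{before} the comparison (the twisting bundles $\CO(\beta')$ do not exist over all of $E^\alpha$). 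Your parenthetical about $\CN$ failing to be ``$S_\alpha$-invariantly trivial'' does not supply the missing mechanism: $S_\alpha$-equivariance is irrelevant to whether a global twist changes a projectivization. Finally, the last step also needs more than ``both are closures of the same open set'': one must choose the summandwise isomorphisms compatibly with factorization (as in Lemmas~\ref{(4)}(b) and~\ref{(5)}(b)) so that, over the disjoint locus, they intertwine the Segre equations~(\ref{Segre eq}) cutting out the two embedded open pieces; otherwise the induced isomorphism of ambient quotients has no reason to carry one closure to the other.
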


The proof will be given in~\S\ref{the proof} after some preparation.
Throughout the proof we will make use of the identification
$^{\on{Mir}}\!\oZ^\alpha_\CK\cong\oZ^\alpha_\CK$ of~Theorem~\ref{mir thm}. Thus we will compare two types
of reduced zastava constructed from the Dynkin quiver $Q$ (as opposed to the group $G$).
Roughly speaking the idea of the proof is as follows. Before the reduction, both types of zastava spaces are closures of the images of certain Segre embeddings into
projective bundles over the configuration space. The key idea is to check that after restricting to the Abel-Jacobi fibers the two projective bundles become isomorphic up to a twist
and the Segre images correspond to each other. This identification is based on certain calculations with line bundles over the Abel-Jacobi fibers performed in Lemmas~\ref{(4)} and~\ref{(5)} below.

Note that $\on{AJ}^{-1}(\CalD)$ is isomorphic to the product of projective spaces
$\prod_{i\in I}\BP^{a_i-1}$. Hence for a sequence of integers $\nu=(n_i)_{i\in I}$ we have a
line bundle $\CO(\nu)=\boxtimes_{i\in I}\CO_{\BP^{a_i-1}}(n_i)$ on $\on{AJ}^{-1}(\CalD)$.

\begin{prop}
  \label{bundles}
  For any $\beta=\sum_{i\in I}b_i\alpha_i\leq\alpha$ we set $b'_i:=b_i-\sum_{j\to i}b_j$, and
  $\beta':=\sum_{i\in I}b'_i\alpha_i$.
  Then we have an isomorphism
 \begin{multline*}\bq_*\left(\bp^*\Big(\CK^\beta\big(\sum_{i\in I}\Delta_{ii}^\beta
   -\sum_{h\in Q_1}\Delta_{\on{o}(h)\on{i}(h)}^\beta\big)\Big)
   \big(\sum_{i\in I}\Delta^{\beta,\gamma}_{ii}\big)\right)\Big|_{\on{AJ}^{-1}(\CalD)}\\
\simeq \bq_*\left(\bp^*\CK^{\prime\beta}\otimes\CO_{E^\beta\times E^\gamma}
\big(\sum_{h\in Q_1}\Delta_{\on{o}(h)\on{i}(h)}^{\beta,\gamma}\big)\right)\Big|_{\on{AJ}^{-1}(\CalD)}\otimes\CO(\beta').
  \end{multline*}
\end{prop}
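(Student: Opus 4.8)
The plan is to compare the two sides fiberwise over $E^\beta\times E^\gamma$, using the following dictionary. Both sheaves are pushforwards along $\bq\colon E^\beta\times E^\gamma\to E^\alpha$ of a line bundle twisted by the divisor $\sum_{h\in Q_1}\Delta^{\beta,\gamma}_{\on{o}(h)\on{i}(h)}$ on the left, and by the same divisor $\sum_{i\in I}\Delta^{\beta,\gamma}_{ii}-\sum_{h\in Q_1}\Delta^{\beta,\gamma}_{\on{o}(h)\on{i}(h)}$ would not match, so first I would rewrite the left-hand bundle. Observe that the divisor $\sum_{i\in I}\Delta^\beta_{ii}$ on $E^\beta$, pulled back appropriately, relates $\bomega_{E^\beta}$ to $\CO(\Delta^\beta)$ via Lemma~\ref{well known} applied to each factor $E^{(b_i)}$; and the divisor $\sum_{i\in I}\Delta^{\beta,\gamma}_{ii}$ on $E^\beta\times E^\gamma$ is the ramification divisor of $\bq$, so by relative Grothendieck--Serre duality for $\bq$ it converts a pushforward into the dual of the pushforward of the dual twist (this is the same manipulation used in~\S\ref{A1 mir} and~\S\ref{A1 reg}). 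The point is to transport the $\sum_i\Delta^\beta_{ii}-\sum_{h}\Delta^\beta_{\on{o}(h)\on{i}(h)}$ twist on the $\bp$-side and the $\sum_i\Delta^{\beta,\gamma}_{ii}$ twist on the total space into the shape of the right-hand bundle.

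The key computation is the following. Restricting to $\on{AJ}^{-1}(\CalD)$ means restricting to those configurations $D_i\in E^{(b_i)}$ whose Abel--Jacobi image is pinned down: since $\on{AJ}^{-1}(\CalD_i)$ is a projective space $\BP^{b_i-1}$, the universal divisor on $E\times\on{AJ}^{-1}(\CalD_i)$ restricts so that $\CO_{E^{(b_i)}}(\Delta^{(b_i)})$ becomes, up to a factor pulled back from $E$ and a factor $\CO_{\BP^{b_i-1}}(n)$ for a computable $n$, expressible in terms of $\CK_i$ versus $\CK'_i$. The crucial identity is that $\CK^\beta$ twisted by $\sum_i\Delta^\beta_{ii}$ (i.e.\ by $\bomega_{E^\beta}^{-2}$, Lemma~\ref{well known}) and by $-\sum_h\Delta^\beta_{\on{o}(h)\on{i}(h)}$, after restriction to $\on{AJ}^{-1}(\CalD)$ and after the Serre-duality flip along $\bq$, becomes $\CK^{\prime\beta}$ up to a line bundle pulled back from $E^\alpha$ via $\bq$ (which then comes out of $\bq_*$ by the projection formula and is absorbed into the identification of $E^\alpha$-bundles one is free to make) and up to $\CO(\beta')$, where the exponent $\beta'=\sum_i b'_i\alpha_i$ with $b'_i=b_i-\sum_{j\to i}b_j$ records exactly the bookkeeping of how many times $\CalD_i$ appears in $\CK'_i=\CK^{-\alpha^\svee_i}\otimes\CalD_i\otimes\bigotimes_{h:i=\on{o}(h)}\CalD_{\on{i}(h)}^{-1}$. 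The substitution~(\ref{393}), $\CK_i=\CK^{-\alpha^\svee_i}$, makes $\CK^\beta=\boxtimes_i(\CK^{-\alpha^\svee_i})^{(b_i)}$, and one checks $\deg$-by-$\deg$ on each Jacobian factor that the $\CalD$-contributions on the two sides match precisely with the claimed $\CO(\beta')$ discrepancy.

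I would organize the argument as: (1) use Lemma~\ref{well known} on each factor to replace $\bp^*\CK^\beta(\sum_i\Delta^\beta_{ii})$ by $\bp^*(\CK^\beta\otimes\bomega_{E^\beta}^{-2})$; (2) apply relative Serre duality along $\bq$ to move $\sum_i\Delta^{\beta,\gamma}_{ii}$ (the ramification divisor) and compare with the right-hand side, where no such twist appears but instead a dual appears implicitly — reconciling the two means identifying $(\bq_*(\cdots))$ on the left with $(\bq_*(\cdots^\vee(\Delta^{\beta,\gamma})))^\vee$, and here the $-\sum_h\Delta^\beta_{\on{o}(h)\on{i}(h)}$ on the left turns into $+\sum_h\Delta^{\beta,\gamma}_{\on{o}(h)\on{i}(h)}$ on the right (this is the same sign flip seen passing from~\eqref{mir bun} to~\eqref{coul bun}); (3) restrict to $\on{AJ}^{-1}(\CalD)$ and evaluate the residual line bundle on $\prod_i\BP^{b_i-1}$ — this is where $\CalD_i$ enters and where $\CO(\beta')$ is produced. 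The main obstacle is step~(3): carefully tracking, factor by factor, the degrees and the twist by $\CalD_i$ through the relative duality and the incidence divisors, so that the exponents $b'_i=b_i-\sum_{j\to i}b_j$ come out exactly right including signs and orientation conventions of the quiver $Q$; everything else is a routine combination of Lemma~\ref{well known}, the projection formula, and relative Grothendieck--Serre duality for the finite flat map $\bq$ whose ramification is $\sum_i\Delta^{\beta,\gamma}_{ii}$.
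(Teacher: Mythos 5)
Your high-level target is the right one: both sides are pushforwards along $\bq$ of line bundles on $E^\beta\times E^\gamma$, so the content of the proposition is a line bundle isomorphism upstairs on $X^{\beta,\gamma}=\bq^{-1}(\on{AJ}^{-1}(\CalD))$, namely that
$\bp^*\big(\CK^\beta(\sum_i\Delta^\beta_{ii}-\sum_h\Delta^\beta_{\on{o}(h)\on{i}(h)})\big)(\sum_i\Delta^{\beta,\gamma}_{ii})$
differs from $\bp^*\CK^{\prime\beta}(\sum_h\Delta^{\beta,\gamma}_{\on{o}(h)\on{i}(h)})$ exactly by $\bq^*\CO(\beta')$, which then exits the pushforward by the projection formula. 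But the tool you propose for the central step --- relative Grothendieck--Serre duality along $\bq$ --- cannot do the work. Both sides of the proposition are plain pushforwards of the same shape; no dual ``appears implicitly'' on the right. Applying duality along $\bq$ to the left side turns it into the dual of another pushforward, and to compare with the right side you would have to dualize that too, landing you back at the same line bundle identity upstairs (now for the inverse bundles). The divisors $\Delta^{\beta,\gamma}_{\on{o}(h)\on{i}(h)}$ with $\on{o}(h)\ne\on{i}(h)$ are not part of the ramification of $\bq$ in any case, so the sign flip from $-\sum_h\Delta^\beta_{\on{o}(h)\on{i}(h)}$ to $+\sum_h\Delta^{\beta,\gamma}_{\on{o}(h)\on{i}(h)}$ is not a duality phenomenon.

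What is actually missing is the mechanism by which the diagonal divisors trivialize over the Abel--Jacobi fiber. The paper's proof (Lemmas~\ref{(4)} and~\ref{(5)}) rests on the seesaw identity $\CO_{E\times X}(\fD_b+\fD_c)\simeq p_E^*\CalD\otimes p_X^*\bq^*\CO(1)$ on $E\times X$ (the union of the two universal divisors over the AJ fiber is a fixed-degree family, hence a pullback of $\CalD$ twisted by $\CO(1)$ on $\BP^{a-1}$), combined with: the expression of $\CO_X(\Delta^{(b),(c)})$ and $\CO_X(-\Delta^{\beta}_{ij}-\Delta^{\beta,\gamma}_{ij})$ as determinants and norms of pushforwards along the finite flat universal divisor $\varpi\colon\fD_b\to X$ (duality is used here, but for $\varpi$, not for $\bq$), the formula $\det f_*\CL\simeq\on{Nm}(\CL)\otimes\det f_*\CO$, and the identity $\CK^{(b)}\simeq\on{Nm}_{\fD_b/E^{(b)}}(\varpi_E^*\CK)$. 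Your step (3), which you correctly flag as ``the main obstacle,'' is exactly this package, and your plan offers no concrete route to it. Also, the remark that a line bundle pulled back from $E^\alpha$ can be ``absorbed into the identification of $E^\alpha$-bundles one is free to make'' is not available: the statement asserts an isomorphism of specific sheaves with only the explicit $\CO(\beta')$ discrepancy, so every $\bq^*$-factor must be tracked and shown to assemble precisely into $\bq^*\CO(\beta')$ (this is where $b'_i=b_i-\sum_{j\to i}b_j$ comes from, via $\bq^*\CO(\beta)$ from Lemma~\ref{(4)} against $\bigotimes_h\bq^*_{\on{i}(h)}\CO(-b_{\on{o}(h)})$ from Lemma~\ref{(5)}).
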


The proposition follows from the projection formula and Lemmas~\ref{(4)} and~\ref{(5)} below.
We denote by $X^{\beta,\gamma}$ the preimage
$\bq^{-1}(\on{AJ}^{-1}(\CalD))$. Its projection to $E^\beta$ (resp.\ to $\on{AJ}^{-1}(\CalD)$)
will be denoted by $\bp$ (resp.\ by $\bq$). We will also need some partial desymmetrizations
of $X^{\beta,\gamma}$. Namely, we have $E^\beta=E^{|\beta|}/S_\beta$, and we will identify
$E^{|\beta|}$ with $\prod_{i\in I}\prod_{r=1}^{b_i}E_{i,r}$, where $E_{i,r}$ is a copy of $E$.
We denote by $X^{|\beta|,\gamma}\stackrel{\rho}{\longrightarrow}X^{\beta,\gamma}$ the cartesian product
$X^{\beta,\gamma}\times_{E^\beta\times E^\gamma}(E^{|\beta|}\times E^\gamma)$. For any $i\in I,\ r\leq b_i$,
the composite morphism $X^{|\beta|,\gamma}\to E^{|\beta|}\times E^\gamma\to E_{i,r}\times E^{\alpha-\alpha_i}$
factors through
$\rho_{i,r}\colon X^{|\beta|,\gamma}\to X^{\alpha_i,\alpha-\alpha_i}\subset E_{i,r}\times E^{\alpha-\alpha_i}$.
Finally, recall the line bundle $\CalD^\beta:=\boxtimes_{i\in I}\CalD_i^{(b_i)}$ on
$E^\beta=\prod_{i\in I}E^{(b_i)}$. Here $\CalD_i^{(b_i)}$ is the descent of $\CalD_i^{\boxtimes b_i}$
from $E^{b_i}$ to $E^{(b_i)}$.

\begin{lem}
  \label{(4)}
  \textup{(a)} We have an isomorphism of line bundles on $X^{\beta,\gamma}$:
  \[\phi_{\beta,\gamma}\colon\bp^*(\CalD^\beta)\otimes\bq^*\CO(\beta)\iso
  \bp^*\left(\CO_{E^\beta}\big(\sum_{i\in I}\Delta_{ii}^\beta\big)\right)\otimes
  \CO_{X^{\beta,\gamma}}\big(\sum_{i\in I}\Delta_{ii}^{\beta,\gamma}\big).\]
  \textup{(b)} We can choose a collection of isomorphisms $\phi_{\beta,\gamma}$ in \textup{(a)}
  satisfying the following factorization property:
  \[\rho^*\phi_{\beta,\gamma}=
  \bigotimes_{i\in I}^{1\leq r\leq b_i}\rho_{i,r}^*\phi_{\alpha_i,\alpha-\alpha_i}\]
  away from the preimage of all the diagonals in $E^\alpha$.
\end{lem}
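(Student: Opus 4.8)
The plan is to trade the ``subtle'' divisors $\sum_i\Delta_{ii}^\beta$ and $\sum_i\Delta_{ii}^{\beta,\gamma}$ for the ``tame'' Abel--Jacobi data $\CalD^\beta$ and $\CO(\beta)$ by producing an explicit global section whose zero divisor is exactly the difference. First I would record the basic identity on which everything rests: for a single point $i\in I$ and the addition map $\bq\colon E\times E^{(b_i-1)}\to E^{(b_i)}$, the incidence divisor $\Delta^{1,b_i-1}$ corresponds to the normal bundle of the embedding $E\times E^{(b_i-1)}\hookrightarrow E\times E^{(b_i)}$, $(x,D')\mapsto(x,x+D')$ (cf.~\cite[Proposition~19.1]{p}), and the line bundle $\CalD_i^{(b_i)}$ on $E^{(b_i)}$ is, by construction of the descent, the norm (determinant of pushforward, twisted appropriately) of $\CalD_i$ along the universal divisor. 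Part~(a) is then the statement that the line bundle $\CO_{E^\beta}\big(\sum_i\Delta_{ii}^\beta\big)$ is, up to pullback of a line bundle from $\prod_i\on{Pic}^{b_i}E$ via $\bp$ and a correction along the $\bq$-directions, isomorphic to the one built from $\CalD^\beta$; restricting to $X^{\beta,\gamma}=\bq^{-1}(\on{AJ}^{-1}(\CalD))$ kills the $\on{Pic}$-ambiguity and turns that correction into $\bq^*\CO(\beta)$ on $\prod_i\BP^{b_i-1}$.

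Concretely, I would argue one vertex $i$ at a time, since the divisors $\Delta_{ii}^\beta$, $\Delta_{ii}^{\beta,\gamma}$ and the bundles $\CalD_i^{(b_i)}$, $\CO_{\BP^{b_i-1}}(1)$ all split as external sums over $I$; so it suffices to treat $\beta=b_i\alpha_i$, $\gamma=\alpha-\beta$, and then take $\boxtimes_{i\in I}$. For a fixed $i$, unwind $E^{(b_i)}$ as $E^{b_i}/S_{b_i}$ with coordinate copies $E_{i,1},\dots,E_{i,b_i}$; on $E^{b_i}\times E^\gamma$ the divisor $\sum_i\Delta_{ii}^{\beta,\gamma}$ becomes $\sum_{r=1}^{b_i}\sum_{j,s}\{w_{i,r}=w_{j,s}^\gamma\}$ over the arrow-compatible pairs, and $\sum_i\Delta_{ii}^\beta$ becomes $\sum_{r<r'}\{w_{i,r}=w_{i,r'}\}$ (the full diagonal inside the $i$-block). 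The section realizing $\phi$ is built from the tautological section of $\CalD_i(-\text{(base point)})$ together with the theta-function expression for the diagonal: on $\on{AJ}^{-1}(\CalD)\cong\prod\BP^{b_i-1}$ the linear system $\CO(1)$ is exactly $|H^0(E,\CalD_i)|$ (using $\deg\CalD_i=b_i$ and $b_i>0$, so $h^0=b_i$), and pulling the homogeneous coordinates back along $\bq$ gives a section of $\bp^*\CalD_i^{(b_i)}\otimes\bq^*\CO(b_i\alpha_i)$ vanishing to first order on $\sum_i\Delta_{ii}^{\beta,\gamma}$ and on $\sum_i\Delta_{ii}^\beta$. A degree count (or, what amounts to the same, the Lemma~\ref{well known}-style local computation identifying $\bomega_{E^{(b_i)}}^{-2}$ with $\CO(\Delta^{b_i})$) shows this section has no further zeros, giving the asserted isomorphism of~(a). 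Part~(b) is then obtained by making the above section canonical: when the points of $E^\alpha$ are pairwise distinct, the pullback $\rho^*$ splits all diagonals apart, the product $\prod_{i,r}$ factorization of both sides is literal, and one simply \emph{defines} $\phi_{\beta,\gamma}$ by this product formula on the disjoint locus and checks it extends (the extension is unique because the complement of the diagonals is dense and the sheaves are line bundles on a variety, hence torsion-free).

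The main obstacle I expect is part~(b): pinning down the isomorphism $\phi_{\alpha_i,\alpha-\alpha_i}$ rigidly enough (i.e.\ fixing the scalar ambiguity inherent in ``$\CO(1)$ = the linear system $|H^0(E,\CalD_i)|$'', which is only defined up to $\BC^\times$) so that the external product over the $b_i$ copies and over $i\in I$ is literally, not just up-to-scalar, equal to $\rho^*\phi_{\beta,\gamma}$ away from the diagonals. The fix is to normalize using the trivialization of $\bomega_E$ from Remark~\ref{triv can}: this trivializes the tangent line bundle, hence trivializes each normal bundle $\CO(\Delta^{1,b_i-1})$ along the relevant embeddings, and thereby removes the scalar; once the local models at a single diagonal point are matched (this is the same residue/adjunction bookkeeping as in~\S\ref{Res} and Lemma~\ref{duo}), the factorization property of~(b) is forced, and compatibility with further degenerations follows by the usual ``associativity of factorization'' argument.
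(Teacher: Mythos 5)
Your part (a) follows essentially the same route as the paper: reduce to a single vertex, use the canonical isomorphism $\CO_{E\times X}(\fD_b+\fD_c)\simeq p_E^*\CalD\otimes p_X^*\bq^*\CO(1)$ furnished by the tautological section of $\CalD\boxtimes\CO(1)$ over $E\times\on{AJ}^{-1}(\CalD)$, and convert it into an isomorphism on $X$ by taking norms/determinants of pushforwards along $\varpi\colon\fD_b\to X$, with Lemma~\ref{well known} ($\bfomega_{E^{(b)}}^{-2}\simeq\CO_{E^{(b)}}(\Delta^b)$) accounting for the $\Delta^\beta$-term. One bookkeeping point you gloss over: the tautological section vanishes identically on $\fD_b$, so ``pulling back the homogeneous coordinates'' does not directly produce a section of $\bp^*\CalD^{(b)}\otimes\bq^*\CO(b)$ vanishing on both $\Delta^\beta$ and $\Delta^{\beta,\gamma}$; one must first divide by the canonical section of $\CO(\fD_b)$, after which the $\CO(-\Delta^\beta)$ twist enters through $\on{Nm}_{\fD_b/X}\big(\CO(-\fD_b)|_{\fD_b}\big)\simeq\bfomega^2_{E^{(b)}}$ rather than as an honest zero of a section. (Also, the preimage of $\Delta^b$ in $E^b$ is \emph{twice} the union of pairwise diagonals, which would throw off a literal degree count.) The paper closes this step by identifying $\Delta^{(b),(c)}$ with the degeneracy locus of $\varpi_*\big(\CO(-\fD_c)|_{\fD_b}\big)\to\varpi_*\CO_{\fD_b}$ and taking determinants; your ``degree count'' would have to amount to the same thing.

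The genuine gap is in part (b). You propose to \emph{define} $\phi_{\beta,\gamma}$ on the complement of the diagonals by the product formula $\bigotimes_{i,r}\rho_{i,r}^*\phi_{\alpha_i,\alpha-\alpha_i}$ and then ``check it extends,'' citing torsion-freeness for uniqueness of the extension. Uniqueness is not the issue: an isomorphism of line bundles defined off a divisor is a rational section of $\CHom(L_1,L_2)$ whose divisor is supported on the diagonals, and nothing forces that divisor to vanish --- the diagonals are not principal in general, so the rational section may genuinely vanish or blow up there, in which case no extension \emph{as an isomorphism} exists. That existence statement is precisely the content of (b), and your proposal offers no argument for it; the remark about normalizing via the trivialization of $\bomega_E$ only addresses the scalar ambiguity, which is moot once $\phi_{\beta,\gamma}$ is defined as a product. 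The paper proceeds in the opposite direction: it fixes $\phi_{\beta,\gamma}$ globally by rigidifying each step of the chain in (a) (replacing $\CO(1)$ by $\CO_{E^{(a)}}(Y_a)|_{\on{AJ}^{-1}(\CalD)}$ for a chosen point $e\in E$, giving the canonical $\tau_b$ and $\sigma_b$), and then verifies step by step that each isomorphism in the chain restricts, over the open locus where $\fD_b$ splits into $b$ disjoint sections of $X^{b,(c)}$, to the tensor product of its rank-one counterparts. Some version of that verification (or an explicit check at the generic point of each diagonal component) is needed to close your argument.
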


\begin{proof}
  (a) It suffices to construct the desired isomorphism when $I$ consists of a single element.
  So we will write $E^{(b)},E^{(c)},E^{(a)}$ in place of $E^\beta,E^\gamma,E^\alpha$.
  We denote by $E\stackrel{p_E}{\longleftarrow} E\times X\stackrel{p_X}{\longrightarrow}X:=X^{(b),(c)}$
  the projections. We consider the projections of the universal divisors 
  $E\stackrel{\varpi_E}{\longleftarrow} \fD_b\stackrel{\varpi_{E^{(b)}}}{\longrightarrow}E^{(b)}$ and
  $E\stackrel{\varpi_E}{\longleftarrow} \fD_c\stackrel{\varpi_{E^{(c)}}}{\longrightarrow}E^{(c)}$.
  We keep the notations $\fD_b\subset E\times X\supset \fD_c$ for the pullbacks to $X$ of
  the universal divisors over $E^{(b)}$ and $E^{(c)}$. We fix a point $e\in E$.
  It defines divisors $Y_b\subset E^{(b)},\ Y_c\subset E^{(c)},\ Y_a\subset E^{(a)}$ formed by
  all the configurations of points on $E$ meeting $e$.
  
  We have an isomorphism of line bundles on $E\times X$:
  \begin{equation}
    \label{(1)}
    \CO_{E\times X}(\fD_b+\fD_c)\simeq p_E^*\CalD\otimes p_X^*\bq^*\CO(1).
  \end{equation}
  More precisely, we have a canonical isomorphism
  \begin{equation}
    \label{new 2}
    \tau_b\colon \CO_{E\times X}(\fD_b+\fD_c)\iso p_E^*\CalD\otimes p_X^*\bq^*\CO_{E^{(a)}}(Y_a).
  \end{equation}
  Indeed, for any $x\in X$ the restrictions of both sides to $E\times\{x\}$ are isomorphic.
  Thus, there exists a line bundle $\CL_X$ on $X$ such that
  $\CO_{E\times X}(\fD_b+\fD_c)=p_E^*\CalD\otimes p_X^*\CL_X$. To determine $\CL_X$ we consider
  the restrictions to $e\times X$ and use the canonical isomorphisms
  \begin{multline*}\CO_{E\times X}(\fD_b)|_{e\times X}\cong\varpi^*_{E^{(b)}}\CO_{E^{(b)}}(Y_b),\
  \CO_{E\times X}(\fD_c)|_{e\times X}\cong\varpi^*_{E^{(c)}}\CO_{E^{(c)}}(Y_c),\\
  \bq^*\CO_{E^{(a)}}(Y_a)\cong\CO_{E^{(b)}}(Y_b)\boxtimes\CO_{E^{(c)}}(Y_c).\end{multline*}

  Since $\varpi_{E^{(b)}}\colon \fD_b\to E^{(b)}$ is finite flat, we have the norm morphism
  \[\on{Nm}_{\fD_b/E^{(b)}}\colon \on{Pic}(\fD_b)\to \on{Pic}(E^{(b)}).\]
  For any line bundle $\CK$ on $E$ we have an isomorphism
  \begin{equation}
    \label{3.2}
    \CK^{(b)}\simeq\on{Nm}_{\fD_b/E^{(b)}}(\varpi_E^*\CK).
  \end{equation}
  Indeed, we can cover $E$ with open affine charts $U_i$ such that $U_i^{(b)}$ cover $E^{(b)}$,
  and $\CK|_{U_i}$ is trivial. Then we claim that both sides are given by the same transition
  functions. In effect, this follows from the fact that for a regular function $u$ on a smooth
  affine curve $C=\on{Spec}(A)$, one has
  \[\on{Nm}_{\fD_C/C^{(b)}}(\varpi_C^*u)=u^{\otimes n}\in\Sym{}\!^n(A),\]
  where $C\stackrel{\varpi_C}{\longleftarrow}\fD_C\to C^{(b)}$ is the universal divisor. The latter
  claim easily reduces to the case when $u$ is the coordinate on the affine line.

  We denote by $\varpi\colon \fD_b\to X$ the natural projection. We have an isomorphism
  \begin{equation}
    \label{(2)}
    \CO_X(\Delta^{(b),(c)})\simeq
    \det\!\varpi_*\CO_{\fD_b}\otimes\det{}\!^{-1}\varpi_*\big(\CO_{E\times X}(-\fD_c)|_{\fD_b}\big).
  \end{equation}
  Indeed, one can identify $\Delta^{(b),(c)}$ with the locus where the morphism of vector bundles
  on $X$, $\varpi_*\big(\CO_{E\times X}(-\fD_c)|_{\fD_b}\big)\to\varpi_*\CO_{\fD_b}$ fails to be an
  isomorphism. Passing to determinants we get~(\ref{(2)}).

  Recall that for any finite flat morphism $f\colon Y\to Z$ and a line bundle $\CL$ on $Y$
  we have an isomorphism
  \begin{equation}
    \label{(3)}
    \det\!f_*\CL\simeq\on{Nm}_{Y/Z}(\CL)\otimes\det\!f_*\CO_Y.
  \end{equation}

  We have to construct an isomorphism
  \begin{equation}
    \label{3.4}
    \phi_{b,c}\colon\bp^*(\CalD^{(b)})\otimes\bq^*\CO(b)\iso\bp^*\bfomega_{E^{(b)}}^{-2}(\Delta^{(b),(c)}).
  \end{equation}

  Recall that $\bp^*\Omega^1_{E^{(b)}}\simeq\varpi_*\CO_{\fD_b}$, and hence
  $\bp^*\bfomega_{D^{(b)}}\simeq\det\varpi_*\CO_{\fD_b}$. The trivialization of $\bomega_E$
  (see~Remark~\ref{triv can}) induces an isomorphism of $\CO_{E\times X}(\fD_b)|_{\fD_b}$
  and the relative canonical line bundle for
  $\varpi\colon \fD_b\to X$. Hence, using~(\ref{(1)}) along with the relative Grothendieck-Serre
  duality for $\varpi$, we get an isomorphism on $E\times X$:
  \[\varpi_*\big(\CO(-\fD_c)|_{\fD_b}\big)\simeq
  \varpi_*\big((\CO_{E\times X}(\fD_b)\otimes p_E^*\CalD^{-1}\otimes\CM^{-1})|_{\fD_b}\big)
  \simeq\varpi_*\big((p_E^*\CalD\otimes\CM)|_{\fD_b}\big)^\vee,\]
  where $\CM=p_X^*\bq^*\CO(1)$. Since $\CM|_{\fD_b}\simeq\varpi^*\bq^*\CO(1)$, we get an isomorphism
  \[\det{}\!^{-1}\varpi_*\big(\CO_{E\times X}(-\fD_c)|_{\fD_b}\big)\simeq
  \det\!\big(\varpi_*(p_E^*\CalD|_{\fD_b})\otimes\bq^*\CO(1)\big)\simeq
  \det\!\varpi_*(p_E^*\CalD|_{\fD_b})\otimes\bq^*\CO(b).\]
  Using~(\ref{(3)}), we can rewrite this as
  \[\det{}\!^{-1}\varpi_*\big(\CO(-\fD_c)|_{\fD_b}\big)\simeq
  \on{Nm}_{\fD_b/X}(p_E^*\CalD)\otimes\det\!\varpi_*\CO_{\fD_b}\otimes\bq^*\CO(b).\]
  Plugging this into~(\ref{(2)}) we get
  \[\CO(\Delta^{(b),(c)})\simeq\on{Nm}_{\fD_b/X}(p_E^*\CalD)\otimes\det\!^2\varpi_*\CO_{\fD_b}\otimes\bq^*\CO(b)
  \simeq\on{Nm}_{\fD_b/X}(p_E^*\CalD)\otimes\bp^*\bfomega_{E^{(b)}}^2\otimes\bq^*\CO(b),\]
  which gives rise to the desired isomorphism~(\ref{3.4}) by the virtue of~(\ref{3.2}).

  This completes the proof of (a).

  \medskip

  (b) The isomorphism~(\ref{new 2}) can be viewed as a way to choose a section $s_{D,D'}$ of
  $\CalD$ vanishing on $D+D'$ for $(D,D')\in X$. Away from the diagonals, writing
  $D=w_1+\ldots+w_b$, we have a collection of restrictions $(s_{D,D'}|_{w_r})_{1\leq r\leq b}$ defining
  an isomorphism $H^0(D,\CO_E(D)|_D)\cong\bigoplus_{r=1}^b\CalD|_{w_r}$. Hence, the tensor product
  of these restrictions defines an isomorphism
  $\det H^0(D,\CO_E(D)|_D)\cong\bigotimes_{r=1}^b\CalD|_{w_r}$.
  More precisely, away from all the diagonals, the isomorphism $\tau_b$ of~(\ref{new 2}) gives
  rise to an isomorphism
  \[\sigma_b\colon \det\!\varpi_*\CO_{E\times X}(\fD_b)|_{\fD_b}\iso
  \det\!\varpi_*(p_E^*\CalD|_{\fD_b})\otimes\bq^*\CO_{E^{(a)}}(bY_a).\]
  Then over $X^{b,(c)}$ (notation introduced right before~Lemma~\ref{(4)}) we have an equality
  \begin{equation}
    \label{3.5}
    \rho^*\sigma_b=\bigotimes_{r=1}^b\rho_r^*\sigma_1.
  \end{equation}
  Indeed, let us consider the pullback of $\tau_b$ under
  $\on{Id}_E\times\rho\colon E\times X^{b,(c)}\to E\times X$. Away from the diagonals
  we have $\widetilde\fD_b:=(\on{Id}_E\times\rho)^{-1}(\fD_b)=\bigsqcup_{r=1}^b\widetilde\fD_b(r)$,
  where $\widetilde\fD_b(r):=(\on{Id}_E\times\rho_r)^{-1}(\fD_1)$. Note that the projection
  $\widetilde\fD_b(r)\to X^{b,(c)}$ is an isomorphism. Hence,
  \[(\on{Id}_E\times\rho)^*\CO_{E\times X}(\fD_b)|_{\widetilde\fD_b(r)}\cong
  (\on{Id}_E\times\rho_r)^*\CO_{E\times X}(\fD_1)|_{\widetilde\fD_b(r)}.\]
  But for any $r=1,\ldots,b$ we have
  \[(\on{Id}_E\times\rho)^*\tau_b|_{\widetilde\fD_b(r)}=(\on{Id}_E\times\rho_r)^*\tau_1\]
  (note that we can ignore $\fD_c$ since we are working away from diagonals). In effect, both
  sides have the same restrictions to $e\in E$.

  Now $\rho^*\varpi_*\CO_{E\times X}(\fD_b)|_{\fD_b}$ decomposes into a direct sum of the line bundles
  $(\on{Id}_E\times\rho)^*\CO_{E\times X}(\fD_b)|_{\widetilde\fD_b(r)}$ on
  $\widetilde\fD_b(r)\simeq X^{b,(c)}$, so taking the determinant of $\rho^*\varpi_*\tau_b|_{\fD_b}$
  corresponds to taking the product of restrictions to $\widetilde\fD_b(r)$ over $r=1,\ldots,b$.

  It follows that the isomorphisms~(\ref{3.4}) can be chosen in a factorizable fashion away from
  all the diagonals, that is satisfying
  \[\rho^*\phi_{b,c}=\bigotimes_{r=1}^b\rho_r^*\phi_{1,a-1}.\]
  Indeed, we replace $\CO(1)$ on $\on{AJ}^{-1}(\CalD)\cong\BP^{a-1}\subset E^{(a)}$
  by the isomorphic line bundle $\CO_{E^{(a)}}(Y_a)|_{\on{AJ}^{-1}(\CalD)}$ and use the canonical
  isomorphism~(\ref{new 2}). Going through the construction of isomorphisms~(\ref{3.4})
  restricted to the complement of all the diagonals, we see that each step is factorizable,
  the first step being dealt with in~(\ref{3.5}). The key point in the other steps is that
  the base change of the relative divisor $\fD_b$ over $X$ with respect to $X^{b,(c)}\to X$
  becomes a disjoint union of $b$ points. So the determinant of the push-forward decomposes as
  tensor product, as well as the norm of a line bundle, etc. Note finally that the
  isomorphism~(\ref{(2)}) reduces to the identity away from the diagonals.

  This completes the proof of (b).  
\end{proof}

In the next lemma it will be convenient to use the notation
$\bp_i\colon E^\beta\times E^\gamma\to E^{(b_i)}$ and
$\bq_i\colon E^\beta\times E^\gamma\to E^{(a_i)}$ for the compositions of $\bp,\bq$ with the
projections to the respective $i$-th factors.

\begin{lem}
  \label{(5)}
  \textup{(a)} We have an isomorphism of line bundles on $X^{\beta,\gamma}$:
  \begin{multline*}
    \psi_{\beta,\gamma}\colon\bigotimes_{i\in I}\bp_i^*\Big(\bigotimes_{h\in Q_1 : \on{o}(h)=i}
    \big(\CalD_{\on{i}(h)}^{-1}\big)^{(b_i)}\Big)
    \otimes\bigotimes_{h\in Q_1}\bq_{\on{i}(h)}^*\Big(\CO_{\BP^{a_{\on{i}(h)}-1}}(-b_{\on{o}(h)})\Big)\\
    \iso\CO_{X^{\beta,\gamma}}\Big(-\sum_{h\in Q_1}\Delta_{\on{o}(h)\on{i}(h)}^\beta
    -\sum_{h\in Q_1}\Delta_{\on{o}(h)\on{i}(h)}^{\beta,\gamma}\Big).
  \end{multline*}
  \textup{(b)} We can choose a collection of isomorphisms $\psi_{\beta,\gamma}$ in \textup{(a)}
  satisfying the following factorization property:
  \[\rho^*\psi_{\beta,\gamma}=
  \bigotimes_{h\in Q_1}^{1\leq r\leq b_{\on{o}(h)}}\rho_{\on{o}(h),r}^*\psi_{\alpha_{\on{o}(h)},\alpha-\alpha_{\on{o}(h)}}\]
  away from the preimage of all the diagonals in $E^\alpha$.
\end{lem}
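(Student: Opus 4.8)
The plan is to reduce to a single arrow and then identify the relevant incidence divisor as a norm along the universal divisor, in close parallel with the proof of Lemma~\ref{(4)} — but in fact more simply, since for $i\ne j$ the incidence divisor $\Delta_{ij}$ does not involve the ramification of the symmetrization map, so no factor $\bfomega_{E^{(b_i)}}$ will appear. Both sides of the isomorphism in (a), as well as the factorization relation in (b), are tensor products of factors indexed by $h\in Q_1$, so it is enough to treat one arrow $h\colon i\to j$ (thus $i=\on{o}(h)$, $j=\on{i}(h)$, and all diagonals and powers below refer to these two colors). Writing $X:=X^{\beta,\gamma}$, after inverting, part (a) for a single arrow becomes the isomorphism on $X$
\[
\CO_X\big(\Delta_{ij}^\beta+\Delta_{ij}^{\beta,\gamma}\big)\iso\bp_i^*\big(\CalD_j^{(b_i)}\big)\otimes\bq_j^*\CO_{\BP^{a_j-1}}(b_i),
\]
where we use $(\CalD_j^{(b_i)})^{-1}=(\CalD_j^{-1})^{(b_i)}$ (the norm being a homomorphism of Picard groups).

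To produce this isomorphism, let $\fD_i^\beta\subset E\times X$ be the pullback of the universal color-$i$ divisor, with projections $\varpi_E\colon\fD_i^\beta\to E$ and $\varpi_i\colon\fD_i^\beta\to X$ (finite flat of degree $b_i$), and let $\fD_j^\beta+\fD_j^\gamma\subset E\times X$ be the universal color-$j$ divisor of $E^\alpha$, with its tautological section $s$ of $\CO_{E\times X}(\fD_j^\beta+\fD_j^\gamma)$. As in~(\ref{(2)})--(\ref{(3)}), the divisor $\Delta_{ij}^\beta+\Delta_{ij}^{\beta,\gamma}$ is the degeneracy locus of the map of rank-$b_i$ bundles $\varpi_{i*}\big(\CO_{E\times X}(-\fD_j^\beta-\fD_j^\gamma)|_{\fD_i^\beta}\big)\to\varpi_{i*}\CO_{\fD_i^\beta}$ induced by $s$; passing to determinants and using~(\ref{(3)}) yields
\[
\CO_X\big(\Delta_{ij}^\beta+\Delta_{ij}^{\beta,\gamma}\big)\simeq\on{Nm}_{\fD_i^\beta/X}\big(\CO_{E\times X}(\fD_j^\beta+\fD_j^\gamma)|_{\fD_i^\beta}\big).
\]
Now $\fD_j^\beta+\fD_j^\gamma$ is the pullback along $\on{id}_E\times\bq_j$ of the universal divisor over $E\times E^{(a_j)}$, which over $\on{AJ}^{-1}(\CalD_j)\cong\BP^{a_j-1}$ has class $\CalD_j\boxtimes\CO_{\BP^{a_j-1}}(1)$ (seesaw, plus restriction to $e\times\BP^{a_j-1}$, cf.\ the canonical isomorphism~(\ref{new 2})); hence $\CO_{E\times X}(\fD_j^\beta+\fD_j^\gamma)\simeq p_E^*\CalD_j\otimes\bq_j^*\CO_{\BP^{a_j-1}}(1)$. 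Restricting to $\fD_i^\beta$ (where $p_E$ becomes $\varpi_E$ and the projection to $X$ becomes $\varpi_i$), and combining multiplicativity of the norm, the relation $\on{Nm}_{\fD_i^\beta/X}(\varpi_i^*L)=L^{\otimes b_i}$, and --- via~(\ref{3.2}) and base change of norms of line bundles along $\bp_i\colon X\to E^{(b_i)}$ --- the relation $\on{Nm}_{\fD_i^\beta/X}(\varpi_E^*\CalD_j)=\bp_i^*(\CalD_j^{(b_i)})$, one gets the displayed isomorphism. Tensoring over all $h\in Q_1$ gives (a).

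For (b) the same computation is run over the partial desymmetrization $\rho\colon X^{|\beta|,\gamma}\to X$, exactly as in the proof of Lemma~\ref{(4)}(b). Away from the preimage of all diagonals in $E^\alpha$ the pulled-back color-$i$ divisor splits as $\rho^*\fD_i^\beta=\bigsqcup_{r=1}^{b_i}\fD_i(r)$, each $\fD_i(r)$ mapping isomorphically to $X^{|\beta|,\gamma}$ as the graph of the $r$-th color-$i$ point; then $\rho^*$ of the norm above becomes the tensor product over $r$ of the restrictions to the $\fD_i(r)$, and each such restriction is, through $\rho_{i,r}\colon X^{|\beta|,\gamma}\to X^{\alpha_i,\alpha-\alpha_i}$, exactly the $\beta=\alpha_i$ (i.e.\ $b_i=1$) instance of the construction (note $\Delta_{ij}^{\alpha_i}=0$ as $i\ne j$). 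As in Lemma~\ref{(4)}(b), to make each step factorizable one first replaces $\CO_{\BP^{a_j-1}}(1)$ on $\on{AJ}^{-1}(\CalD_j)$ by the isomorphic line bundle $\CO_{E^{(a_j)}}(Y_{a_j})|_{\on{AJ}^{-1}(\CalD_j)}$ (with $Y_{a_j}\subset E^{(a_j)}$ the configurations meeting $e$) and uses the canonical isomorphism~(\ref{new 2}); thereafter the splitting of $\fD_i^\beta$ into $b_i$ disjoint sections turns the determinants, norms and restrictions into tensor products over $r$. The main obstacle in (b) is this bookkeeping --- keeping straight which color ($i=\on{o}(h)$) is desymmetrized while the other ($j=\on{i}(h)$) stays symmetric, and checking the base-change and norm steps survive restriction to each $\fD_i(r)$ --- but it is strictly parallel to Lemma~\ref{(4)}(b) and needs no new idea.
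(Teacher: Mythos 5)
Your proposal is correct and follows essentially the same route as the paper: reduce to a single arrow $i\to j$, express $\CO_X(-\Delta_{ij}^\beta-\Delta_{ij}^{\beta,\gamma})$ as the norm $\on{Nm}_{\fD_{b_i}/X}$ of $\CO_{E\times X}(-\fD_{b_j}-\fD_{c_j})|_{\fD_{b_i}}$ via determinants of pushforwards, then use the seesaw identification $\CO_{E\times X}(\fD_{b_j}+\fD_{c_j})\simeq p_E^*\CalD_j\otimes\bq_j^*\CO(1)$ together with~(\ref{3.2}); and for (b), normalize via~(\ref{new 2}) and use the splitting of the desymmetrized divisor into disjoint sections, exactly as in Lemma~\ref{(4)}(b). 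The only cosmetic differences are that you work with the inverse (effective) line bundles and desymmetrize the $i$-color directly rather than first pushing forward along the addition of $j$-colored divisors as the paper does.
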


\begin{proof}
  (a) It suffices to construct the desired isomorhism when $I$ consists of two vertices connected
  by an arrow as follows: $i\to j$. We denote by $\fD_{b_i},\fD_{b_j},\fD_{c_i},\fD_{c_j}\subset E\times X$
  the relative divisors pulled back from the universal divisors over the corresponding
  symmetric powers of $E$ (here $X=X^{\beta,\gamma}$). We denote by $\varpi\colon \fD_{b_i}\to X$
  the natural projection which is a finite flat morphism of degree $b_i$. Similarly to~(\ref{(2)}), we
  have isomorphisms
  \[\CO_X(-\Delta^\beta_{ij})\simeq
  \det\!^{-1}\varpi_*\CO_{\fD_{b_i}}\otimes\det\!\varpi_*\big(\CO_{E\times X}(-\fD_{b_j})|_{\fD_{b_i}}\big)\simeq
  \on{Nm}_{\fD_{b_i}/X}\big(\CO_{E\times X}(-\fD_{b_j})|_{\fD_{b_i}}\big),\]
  \[\CO_X(-\Delta^{\beta,\gamma}_{ij})\simeq
\det\!^{-1}\varpi_*\CO_{\fD_{b_i}}\otimes\det\!\varpi_*\big(\CO_{E\times X}(-\fD_{c_j})|_{\fD_{b_i}}\big)\simeq
  \on{Nm}_{\fD_{b_i}/X}\big(\CO_{E\times X}(-\fD_{c_j})|_{\fD_{b_i}}\big).\]
  Thus, we have an isomorphism
  \[\CO_X(-\Delta^\beta_{ij}-\Delta^{\beta,\gamma}_{ij})\simeq
  \on{Nm}_{\fD_{b_i}/X}\big(\CO_{E\times X}(-\fD_{b_j}-\fD_{c_j})|_{\fD_{b_i}}\big).\]
  Using the isomorphism (recall the projections
  $E\stackrel{p_E}{\longleftarrow} E\times X\stackrel{p_X}{\longrightarrow}X$)
  \[\CO_{E\times X}(\fD_{b_j}+\fD_{c_j})\simeq p_E^*\CalD_j\otimes p_X^*\bq^*\CO(0,1)\]
  together with~(\ref{3.2}), we get an isomorphism
  \begin{multline*}
    \on{Nm}_{\fD_{b_i}/X}\big(\CO_{E\times X}(-\fD_{b_j}-\fD_{c_j})|_{\fD_{b_i}}\big)\simeq
    \on{Nm}_{\fD_{b_i}/X}(p_E^*\CalD_j^{-1})\otimes\bq^*\CO(0,-b_i)\\
    \simeq\bp^*\big((\CalD_j^{-1})^{(b_i)}\boxtimes\CO_{E^{(c_i)}}\big)\otimes\bq^*\CO(0,-b_i),
    \end{multline*}
  and (a) follows.

  \medskip

  The proof of (b) is similar to the one of~Lemma~\ref{(4)}(b). It is still enough to consider
  the case when $I$ consists of two vertices connected by an arrow $i\to j$.
  We construct a factorizable collection of $\psi_{\beta,\gamma}$ in stages. At the first step we
  note that there is an evident morphism
  $\varrho\colon X^{\beta,\gamma}\to X^{b_i\alpha_i,c_i\alpha_i+(b_j+c_j)\alpha_j}$ (addition of $j$-colored
  divisors), and we choose
  $\psi_{\beta,\gamma}$ as $\varrho^*\psi_{b_i\alpha_i,c_i\alpha_i+(b_j+c_j)\alpha_j}$.
  So it suffices to construct a factorizable collection of $\psi_{\beta,\gamma}$ for the particular
  case when $\beta$ is a multiple of $\alpha_i$.

  Next, we have a cartesian diagram 
  \[\begin{CD}
  X^{\alpha_i,\gamma'} @<{\rho_{i,r}}<< X^{|b_i\alpha_i|,\gamma} @>>> E^{b_i}\times E^\gamma\\
  @. @V{\rho}VV @VVV\\
  @. X^{b_i\alpha_i,\gamma} @>>> E^{(b_i)}\times E^\gamma,
  \end{CD}\]
  where $\gamma'=\gamma+(b_i-1)\alpha_i$. We have to choose our isomorphisms $\psi$ so that
  $\rho^*\psi_{b_i\alpha_i,\gamma}=\bigotimes_{r=1}^{b_i}\rho_{i,r}^*\psi_{\alpha_i,\gamma'}$.
  To this end note that $X^{\alpha_i,\gamma'}\simeq E^{(c'_i)}\times\BP\Gamma(E,\CalD_j)$,
  and we can take $\psi_{\alpha_i,\gamma'}$ to be the pullback of the universal section in the space
  $\Gamma\big(E\times\BP\Gamma(E,\CalD_j),\CalD_j\boxtimes\CO(1)\big)$ under the projection
  $E^{(c'_i)}\to E$ sending $D\in E^{(c'_i)}$ to the unique $x\in E$ such that $D+x\sim\CalD_i$.

  The lemma is proved.
\end{proof}

\subsection{Segre embeddings involved in the definition of zastava spaces}
\label{segre}

Recall that both the zastava spaces we are interested in (Coulomb and Mirkovi\'c) are defined as closures of certain Serge embeddings in projective bundles over the configuration spaces.
In this subsection we write down the equations of the images of these Segre embeddings.

We redenote
\[E^\beta\xleftarrow{\bp}E^\beta\times E^\gamma\xrightarrow{\bq}E^\alpha\] by
\[E^\beta\xleftarrow{\bp_\beta}E^\beta\times E^\gamma\xrightarrow{\bq^\beta}E^\alpha\]
since $\beta$ will vary. The ruled surface $\BP(\CK_i\oplus\CO_E)\to E$ will be denoted $P_i\to E$.
We have the Segre embedding
\begin{equation}
  \label{Segre}\big(\prod_{i\in I}P_i^{a_i}\big)/S_\alpha\hookrightarrow
  \BP\big(\boxtimes_{i\in I}((\CK_i\oplus\CO_E)^{\boxtimes a_i})\big)/S_\alpha.
\end{equation}
For any vector bundle
$\CW$ over $E$ we have an isomorphism $\BP(\CW^{\boxtimes a})/S_a\simeq\BP(\CW^{(a)})$, where
$\CW^{(a)}$ is the subsheaf of $S_a$-invariants in the pushforward of $\CW^{\boxtimes a}$ from
$E^a$ to $E^{(a)}$. Thus, the RHS of~(\ref{Segre}) is equal to
$\BP\big(\boxtimes_{i\in I}(\CK_i\oplus\CO_E)^{(a_i)}\big)$. Furthermore, we have a decomposition
\[\boxtimes_{i\in I}(\CK_i\oplus\CO_E)^{(a_i)}=\bigoplus_{\beta+\gamma=\alpha}\bq^\beta_*\bp_\beta^*\CK^\beta\]
(recall that $\CK^\beta:=\boxtimes_{i\in I}\CK_i^{(b_i)}$). Thus we can rewrite the Segre map as
\begin{equation}
  \label{void}
  \big(\prod_{i\in I}P_i^{a_i}\big)/S_\alpha\hookrightarrow
  \BP\big(\bigoplus_{\beta+\gamma=\alpha}\bq^\beta_*\bp_\beta^*\CK^\beta\big).
\end{equation}
Let $(w_{i,r})_{i\in I}^{1\leq r\leq a_i}$ be a collection of distinct points of $E$. Then the fiber
of the RHS of~(\ref{void}) at the corresponding point of $E^\alpha$ is the projectivization of
\[\bigotimes_{i\in I}^{1\leq r\leq a_i}(\CK_i\oplus\CO_E)|_{w_{i,r}}=
\bigoplus_\aleph\bigotimes_{(i,r)\in\aleph}\CK_i|_{w_{i,r}},\]
where the summation runs over all the subsets $\aleph$ of the set of pairs
$(i,r)_{i\in I}^{1\leq r\leq a_i}$. For $s_{i,r}\in\CK_i|_{w_{i,r}}$ the Segre embedding is given by
\[\big((s_{i,r},1)_{i\in I}^{1\leq r\leq a_i}\big)\mapsto\bigotimes_{i\in I}^{1\leq r\leq a_i}(s_{i,r},1)=
\big(\bigotimes_{(i,r)\in\aleph}s_{i,r}\big)_\aleph.\]
The equations cutting out the image of Segre embedding can be formulated as a certain factorization
property of the sections' collection $(s_\aleph)$. More precisely, let us consider a morphism
\[\bq^\aleph\colon E^\aleph\times E^\gamma\to E^\alpha,\ \big((w_{i,r})_{(i,r)\in\aleph},D\big)\mapsto
\sum_{(i,r)\in\aleph}w_{i,r}+D,\]
where $\beta:=\sum_{(i,r)\in\aleph}\alpha_i$, and $\gamma:=\alpha-\beta$. Let also
$\bp_\aleph\colon E^\aleph\times E^\gamma\to E^\aleph$ denote the projection. Also, for any
$(i,r)\in\aleph$ we consider a morphism
\[\rho_{i,r}\colon E^\aleph\times E^\gamma\to E^{\alpha_i}\times E^{\alpha-\alpha_i},\
\big((w_{i,r})_{(i,r)\in\aleph},D\big)\mapsto(w_{i,r},\sum_{(j,s)\in\aleph\setminus\{(i,r)\}}w_{j,s}+D).\]
Note that $\bq^{\alpha_i}\circ\rho_{i,r}=\bq^\aleph$. Then we have natural morphisms of vector bundles
\[\kappa_\aleph\colon \bq^\beta_*\bp_\beta^*\CK^\beta\hookrightarrow
\bq^\aleph_*\bp_\aleph^*\boxtimes_{(i,r)\in\aleph}\CK_i=
\bq^\aleph_*\big(\bigotimes_{(i,r)\in\aleph}\rho_{i,r}^*\bp_{\alpha_i}^*\CK_i\big),\]
\begin{multline*}
  \varkappa_\aleph\colon \bigotimes_{(i,r)\in\aleph}(\bq^{\alpha_i}_*\bp_{\alpha_i}^*\CK_i)\hookrightarrow
\bigotimes_{(i,r)\in\aleph}(\bq^{\alpha_i}_*\rho_{i,r*}\rho_{i,r}^*\bp_{\alpha_i}^*\CK_i)\\
=\bigotimes_{(i,r)\in\aleph}(\bq^\aleph_*\rho_{i,r}^*\bp_{\alpha_i}^*\CK_i)\to
\bq^\aleph_*\big(\bigotimes_{(i,r)\in\aleph}\rho_{i,r}^*\bp_{\alpha_i}^*\CK_i\big).
\end{multline*}
We are finally able to state the Segre equations on the sections' collection $(s_\aleph)$.
We assume that the section $s_\emptyset$ corresponding to the empty subset $\aleph=\emptyset$
is identically equal to 1 (this assumption is harmless since we are working in the
projectivization.) Then the equations read
\begin{equation}
  \label{Segre eq}
  \kappa_\aleph(s_\aleph)=\varkappa_\aleph(\bigotimes_{(i,r)\in\aleph}s_{i,r}).
\end{equation}

\subsection{Proof of Theorem~\ref{reductio}}
\label{the proof}
According to Proposition~\ref{bundles}, the summands in $\BV_\CK^\alpha|_{\on{AJ}^{-1}(\CalD)}$
are isomorphic to the corresponding summands in $\BU_{\CK'}^\alpha|_{\on{AJ}^{-1}(\CalD)}$ twisted by
$\CO(\beta')$ where $\beta'$ depends linearly on $\beta$ numbering the summand.
The isomorphism is given by the tensor product of isomorphisms
$\phi_{\beta,\gamma}$~(Lemma~\ref{(4)}(a)) and $\psi_{\beta,\gamma}$~(Lemma~\ref{(5)}(a)). Comparing with
the definition of $T^{\on{ad}}$-action in the first paragraph of~\S\ref{ham red}, we see that
the quotients $\big(\BP\BV_\CK^\alpha|_{\on{AJ}^{-1}(\CalD)}\big)/T^{\on{ad}}$ and
$\big(\BP\BU_{\CK'}^\alpha|_{\on{AJ}^{-1}(\CalD)}\big)/T^{\on{ad}}$ coincide.

It remains to check that the closures of the images of Segre embeddings correspond to each other
under the
above identification. Let $X^\circ$ stand for the open subset of $\on{AJ}^{-1}(\CalD)$ defined
as the complement to all the diagonals in $E^\alpha$. The factorization properties
of~Lemma~\ref{(4)}(b) and~Lemma~\ref{(5)}(b) compared with the Segre equations~(\ref{Segre eq})
show that the isomorphism of the previous paragraph restricted to $X^\circ$ respects the
Segre embeddings.

The theorem is proved.

\section{Feigin-Odesskii moduli space}
\label{shest}

\subsection{A symplectic moduli stack}
\label{fo}
We fix a $G$-bundle $\CF_G$ on $E$ and a $T$-bundle $\CL_T$ of degree $-\alpha$ on $E$.
We denote by $M(\CF_G,\CL_T)$ the moduli stack of $B$-structures $\varphi$ on $\CF_G$ equipped
with an isomorphism $\Ind_B^T\varphi\iso\CL_T$. It can be upgraded to a derived stack equipped with a
(0-shifted) symplectic structure. Indeed, recall~\cite{ptvv} that both $\Bun_G$ and $\Bun_T$
(moduli stacks of $G$- and $T$-bundles on $E$) carry the canonical 1-shifted symplectic structures.
Furthermore,~\cite[Example~4.11]{saf} equips the correspondence $\Bun_B\to\Bun_G\times\Bun_T$
with a canonical Lagrangian structure. Finally, the embeddings of stacky points
$[\CF_G]=\pt/\!\on{Aut}(\CF_G)\to\Bun_G$ and $[\CL_T]=\pt/\!\on{Aut}(\CL_T)\to\Bun_T$ are equipped
with the natural Lagrangian structures similarly to~\cite[Theorem~3.18]{hp}.
We consider the homotopy fibre product
\begin{equation}
  \label{homotopy}
  \begin{CD}
M^\der(\CF_G,\CL_T) @>>> \Bun_B\\
@VVV  @VVV\\
[\CF_G]\times[\CL_T] @>>> \Bun_G\times\Bun_T.
  \end{CD}
\end{equation}
The truncation of $M^{\on{der}}(\CF_G,\CL_T)$ coincides with $M(\CF_G,\CL_T)$.

Now $M^\der(\CF_G,\CL_T)$ is a derived Lagrangian intersection and hence acquires a 0-shifted
symplectic structure by~\cite{ptvv}, cf.\ a similar construction~\cite{spa} for the base curve of
genus~0.

\subsection{Tangent spaces}
\label{tangent}
For a point $\varphi$ in $M^\der(\CF_G,\CL_T)$,
we denote by $\ft_\varphi\twoheadleftarrow\nobreak\fb_\varphi\hookrightarrow\nobreak\fg_\varphi$
the vector bundles
on $E$ associated with the adjoint representations of $B$ (clearly, $\ft_\varphi$ is trivial).
The tangent complex at the corresponding point $\CF_G$ of $\Bun_G$ is $R\Gamma(E,\fg_\varphi[1])$,
and the tangent complex at the corresponding point $\CL_T$ of $\Bun_T$ is
$R\Gamma(E,\ft_\varphi[1])$,
while the tangent complexes at the corresponding stacky points $[\CF_G]$ and $[\CL_T]$ are
the truncations $\tau_{<0}R\Gamma(E,\fg_\varphi[1])$ and $\tau_{<0}R\Gamma(E,\ft_\varphi[1])$
respectively. From~(\ref{homotopy}) we deduce the homotopy fibre square
\begin{equation}
  \label{fibre}
  \begin{CD}
T_\varphi M^\der(\CF_G,\CL_T) @>>> R\Gamma(E,\fb_\varphi[1])\\
@VVV  @VVV\\
\tau_{<0}R\Gamma(E,\fg_\varphi[1]\oplus\ft_\varphi[1]) @>>>
R\Gamma(E,\fg_\varphi[1]\oplus\ft_\varphi[1]).
  \end{CD}
\end{equation}
Hence the tangent space $T_\varphi M^\der(\CF_G,\CL_T)$ is canonically isomorphic to
the total complex
\begin{equation}
  \label{total}
  T_\varphi M^\der(\CF_G,\CL_T)\cong
  \big[R\Gamma(E,\fb_\varphi[1])\to\tau_{\geq0}R\Gamma(E,\fg_\varphi[1]\oplus\ft_\varphi[1])\big].
\end{equation}

Furthermore, we have an exact sequence of $B$-modules
$0\to\fb\to\fg\oplus\ft\to\fb^\svee\to0$
and the corresponding exact sequence of associated vector bundles
\begin{equation}
  \label{bgtb}
  0\to\fb_\varphi\to\fg_\varphi\oplus\ft_\varphi\to\fb_\varphi^\svee\to0.
\end{equation}
Replacing the right column of~(\ref{fibre}) by its cone $R\Gamma(E,\fb_\varphi^\svee[1])$,
we can rewrite
\begin{equation}
  \label{total'}
  T_\varphi M^\der(\CF_G,\CL_T)\cong\big[\tau_{\leq0}R\Gamma(E,\fg_\varphi\oplus\ft_\varphi)\to
    R\Gamma(E,\fb_\varphi^\svee)\big].
\end{equation}
On the other hand, the exact sequence~(\ref{bgtb}) is clearly selfdual, and the Serre duality
on $E$ gives rise to a perfect pairing between the RHS of~(\ref{total}) and~(\ref{total'}).
This perfect pairing on $T_\varphi M^\der(\CF_G,\CL_T)$ is nothing but the symplectic structure
of~\S\ref{fo}.

Equivalently, at a smooth point $\varphi$ in $M^\der(\CF_G,\CL_T)$, the Poisson bivector is defined
using the differential $d_2$ of the second page of the hypercohomology spectral sequence for
the complex $\fn_\varphi\to\fg_\varphi\to\fg_\varphi/\fb_\varphi$ of vector bundles on $E$.

\begin{rem}
The original definition of Feigin-Odesskii in \cite{fo} is that of the Poisson bivector on the (smooth points of) moduli stack of
$B$-bundles (or more generally $P$-bundles where $P$ is a parabolic subgroup), which is constructed similarly
to our definition above. As we have discussed in~\S \ref{fo}, this Poisson bivector is a classical shadow of the $0$-shifted
Poisson structure on $\Bun_B$ associated with the natural Lagrangian structure on $\Bun_B\to \Bun_G\times \Bun_T$.
So the truncation of the smooth part of $M^\der(\CF_G,\CL_T)$ is a symplectic leaf of the original Feigin-Odesskii
Poisson structure.
\end{rem}

\subsection{Regular induced case}
We consider a special case when a $G$-bundle $\CF_G$ is induced from a degree zero $T$-bundle
$\CK_T\colon \CF_G=\Ind_T^G\CK_T$. Moreover, we assume that $\CK_T$ is {\em regular}, that is,
for any root $\alpha^\svee\in R^\vee$, the associated line bundle $\CK^{\alpha^\svee}$ is nontrivial.
Then for any dominant weight $\lambda^\svee\in\Lambda^{\vee+}$ the corresponding vector
bundle $\CV_\CF^{\lambda^\svee}$ (associated to the irreducible $G$-module $V^{\lambda^\svee}$)
canonically splits into direct sum of its weight components. In particular, we have a projection
$\xi^{\lambda^\svee}\colon\CV_\CF^{\lambda^\svee}\twoheadrightarrow\CK^{\lambda^\svee}$ onto the lowest weight
component line bundle (associated to the character $w_0\lambda^\svee\colon T\to\BC^\times$).
The collection of $\xi^{\lambda^\svee},\ \lambda^\svee\in\Lambda^{\vee+}$, is subject to Pl\"ucker
relations. If we act on our data by an automorphism of $\CK_T$ given by an element $t\in T$,
the projection $\xi^{\lambda^\svee}$ will change to
$\lambda^\svee(t)\cdot\xi^{\lambda^\svee}$, cf.~Definition~\ref{ell zas}(5).
Since $\on{Aut}(\Ind_T^G\CK_T)=T$ by regularity of $\CK_T$ (see e.g.~\cite[Proposition 3.10]{fmw}
and~\cite[Theorem 4.1(i)]{fm}), the collection of projections
$\xi^{\lambda^\svee}\colon\CV_\CF^{\lambda^\svee}\twoheadrightarrow\CK^{\lambda^\svee}$ subject to
Pl\"ucker relations is well defined up to the action of $T$.

Another piece of data in the definition of the Feigin-Odesskii moduli space $M(\CF_G,\CL_T)$
is the $T$-bundle $\CL_T$. For a fundamental weight $\omega^\svee_i$ we consider the associated
line bundle $\CL^{\omega^\svee_i}$, and we set $\CalD_i:=\CL^{-\omega^\svee_i}\otimes\CK^{\omega^\svee_i}$.
We have $\CalD_i\in\on{Pic}^{a_i}E$, where $\alpha=\sum a_i\alpha_i$ (recall that $-\alpha$ is the
degree of $\CL_T$). We set $\CalD=(\CalD_i)_{i\in I}$.

We consider an open substack $\oM^\der(\Ind_T^G\CK_T,\CL_T)\subset M^\der(\Ind_T^G\CK_T,\CL_T)$
given by the condition that the compositions
$\CL^{\lambda^\svee}\hookrightarrow\CV_\CF^{\lambda^\svee}\stackrel{\xi^{\lambda^\svee}}\twoheadrightarrow
\CK^{\lambda^\svee}$ never vanish identically. Ignoring the derived structure we obtain an
open substack $\oM(\Ind_T^G\CK_T,\CL_T)\subset M(\Ind_T^G\CK_T,\CL_T)$.

\begin{prop}
  \label{feod}
For a regular $T$-bundle $\CK_T$, we have a natural isomorphism
\[^{\vphantom{\alpha}}_\CalD\uoZ^\alpha_\CK\cong\oM(\Ind_T^G\CK_T,\CL_T).\]
\end{prop}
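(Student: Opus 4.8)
The plan is to show that, for regular $\CK_T$, both sides are the quotient by a single copy of $T$ of one and the same moduli space of $B$-structures on $\Ind_T^G\CK_T$, and to produce the identification through the Drinfeld--Plücker dictionary already used to define the zastava. First I would use regularity to pin down the pair $(\CF_G,\xi)$ occurring in the zastava data. Every point of $\oZ^\alpha_\CK$ carries, by Definition~\ref{ell zas}(2), a genuine $\CK$-twisted $U_-$-structure, i.e.\ a reduction of $\CF_G$ to the sheaf of unipotent groups $\CU^\CK_-$. Since $\CK_T$ is regular, for each positive root $\alpha^\svee\in R^\vee$ the line bundle $\CK^{-\alpha^\svee}$ has degree $0$ and is nontrivial, so $H^0(E,\CK^{-\alpha^\svee})=H^1(E,\CK^{-\alpha^\svee})=0$; filtering $\CU^\CK_-$ by the $\CK_T$-twisted lower central series of $U_-$, whose subquotients are these line bundles, one gets $H^0(E,\CU^\CK_-)=H^1(E,\CU^\CK_-)=\{*\}$, so there is a unique $\CU^\CK_-$-torsor and it is rigid. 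This torsor is the canonical $\CK$-twisted $U_-$-structure on $\Ind_T^G\CK_T$ coming from $\Ind_T^G\CK_T\supset\CK_T\times_T B_-$, and $\Ind_T^G\CK_T$ is the only $G$-bundle admitting such a structure. Hence for regular $\CK_T$ the datum $(\CF_G,\xi)$ is forced to be $(\Ind_T^G\CK_T,\text{canonical projection})$, and $\oZ^\alpha_\CK$ becomes the moduli space of a $T$-bundle $\CM_T$ of degree $-\alpha$ together with a $B$-structure $\eta$ on $\Ind_T^G\CK_T$ with $\Ind_B^T\eta=\CM_T$ and $\eta$ generically transversal to $\xi$ (condition~(iii)), with the action of Definition~\ref{ell zas}(5) rescaling $\xi^{\lambda^\svee}$ by $\lambda^\svee(t)$ acting on this space (freely modulo $Z(G)$).

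Next I would compute $\on{AJ}_Z$: by Definition~\ref{ell zas}(4), $\pi^\alpha$ sends such a datum to the tuple of zero divisors $(D_i)_{i\in I}$ of the nonzero compositions $\CM^{\omega^\svee_i}\xrightarrow{\eta}\CV_\CF^{\omega^\svee_i}\xrightarrow{\xi}\CK^{\omega^\svee_i}$, so $\CM^{\omega^\svee_i}(D_i)\cong\CK^{\omega^\svee_i}$ and $\on{AJ}_Z$ of the datum is $\big(\CK^{\omega^\svee_i}\otimes\CM^{-\omega^\svee_i}\big)_{i\in I}$. Equating this with $\CalD_i=\CL^{-\omega^\svee_i}\otimes\CK^{\omega^\svee_i}$ gives $\CM^{\omega^\svee_i}\cong\CL^{\omega^\svee_i}$ for all $i$, hence (as $G$ is simply connected, so the $\omega^\svee_i$ form a $\BZ$-basis of $\Lambda^\vee$) $\CM_T\cong\CL_T$. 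Thus $\on{AJ}_Z^{-1}(\CalD)$ is the moduli space of $B$-structures $\eta$ on $\Ind_T^G\CK_T$ with $\Ind_B^T\eta$ isomorphic to $\CL_T$ and generically transversal to $\xi$. Choosing such an isomorphism $\Ind_B^T\eta\iso\CL_T$ converts this datum into one of $M(\Ind_T^G\CK_T,\CL_T)$, the generic transversality becoming precisely the open condition defining $\oM(\Ind_T^G\CK_T,\CL_T)$; the ambiguity of the choice is a torsor under $\on{Aut}(\CL_T)=T$, and one checks that, together with $\on{Aut}(\Ind_T^G\CK_T)=T$ present on both sides, it matches the $T$-action of Definition~\ref{ell zas}(5) modulo $Z(G)$, so that $^{\vphantom{\alpha}}_\CalD\uoZ^\alpha_\CK$ and $\oM(\Ind_T^G\CK_T,\CL_T)$ are both identified with the quotient by a single copy of $T$ of the moduli space of $B$-structures $\eta$ on $\Ind_T^G\CK_T$ with $\Ind_B^T\eta=\CL_T$ on the nose and $\eta$ generically transversal to $\xi$. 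Running the dictionary in reverse yields the inverse map, and the construction is visibly natural in all parameters.

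I expect the main obstacle to be exactly this last step of bookkeeping: correctly counting the several copies of $T$ in play -- the rescaling of Definition~\ref{ell zas}(5), $\on{Aut}(\CL_T)$, and $\on{Aut}(\Ind_T^G\CK_T)$ -- so that the stacky structures on the two sides agree and no torus factor is over- or under-counted, and making explicit the Plücker descriptions (genuine $\CK$-twisted $U_-$-structure $\longleftrightarrow$ $\CU^\CK_-$-reduction; systems of Plücker line subbundles $\longleftrightarrow$ honest $B$-structures; and the matching of the openness conditions) that underlie the whole argument. By contrast the geometric inputs -- the cohomology vanishing for regular $\CK_T$ and the computation of $\on{AJ}_Z$ -- are routine.
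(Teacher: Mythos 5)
Your proposal is correct and rests on the same mechanism as the paper's proof: regularity of $\CK_T$ forces $H^0=H^1=0$ for the line bundles $\CK^{\pm\alpha^\svee}$, and this rigidifies the pair (twisted $U_-$-structure, $\CF_G$) to the canonical one on $\Ind_T^G\CK_T$, after which the two moduli problems visibly coincide up to the $T$-quotient. The only real difference is in how that rigidity is packaged: you run the dévissage on the group sheaf $\CU_-^\CK$ itself (lower central series with subquotients $\CK^{-\alpha^\svee}$, so the torsor is unique and has no automorphisms), whereas the paper works Tannakianly, observing that the twisted $U_-$-structure filters each associated bundle $\CV^{\lambda^\svee}_\CF$ with graded pieces $\CK^{\mu^\svee}\otimes V^{\lambda^\svee}(w_0\mu^\svee)$ and that regularity splits these filtrations canonically, yielding the $T$-reduction $\CF\cong\Ind_T^G\CK_T$; these are equivalent formulations of one fact. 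Your additional steps -- the Abel--Jacobi computation identifying $\CM_T$ with $\CL_T$ on the fibre over $\CalD$, and the matching of the various $T$-actions ($\on{Aut}(\Ind_T^G\CK_T)$, $\on{Aut}(\CL_T)$, and the rescaling of Definition~\ref{ell zas}(5)) -- are left implicit in the paper (the first is built into the definition $\CalD_i=\CL^{-\omega^\svee_i}\otimes\CK^{\omega^\svee_i}$ preceding the proposition), so making them explicit is a point in your write-up's favour rather than a deviation.
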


\begin{proof}
Comparing with~Definitions~\ref{ell zas},\ref{red zas}, we see that the collection of projections
$\xi^{\lambda^\svee}\colon\CV_\CF^{\lambda^\svee}\twoheadrightarrow\CK^{\lambda^\svee}$ along with
the collection of embeddings $\CL^{\lambda^\svee}\hookrightarrow\CV_\CF^{\lambda^\svee}$ defines a
point of reduced zastava $^{\vphantom{\alpha}}_\CalD\uoZ^\alpha_\CK$. Thus we obtain a morphism
$\Upsilon\colon\oM(\Ind_T^G\CK_T,\CL_T)\to{}^{\vphantom{\alpha}}_\CalD\uoZ^\alpha_\CK$.
We have to check that $\Upsilon$ is an isomorphism. To this end note that a twisted
$U_-$-structure on a $G$-bundle $\CF$ defines a filtration on the associated vector bundle
$\CV^{\lambda^\svee}_\CF$ for any dominant weight $\lambda^\svee$. The successive quotients
of this filtration are of the form
$\CK^{\mu^\svee}\otimes V^{\lambda^\svee}(w_0\mu^\svee)$ for the weights $\mu^\svee$ of the irreducible
$G$-module $V^{\lambda^\svee}$. The regularity condition on $\CK_T$ ensures that this filtration
splits canonically, i.e.\
$\CV^{\lambda^\svee}_\CF\cong\bigoplus_{\mu^\svee}\CK^{\mu^\svee}\otimes V^{\lambda^\svee}(w_0\mu^\svee)$.
This collection of splittings defines a reduction of $\CF$ to $T\subset G$, that is a
canonical isomorphism $\CF\cong\Ind_T^G\CK_T$. This construction provides a morphism
$^{\vphantom{\alpha}}_\CalD\uoZ^\alpha_\CK\to\oM(\Ind_T^G\CK_T,\CL_T)$ inverse to $\Upsilon$.
\end{proof}

\begin{rem}
The conclusion of~Proposition~\ref{feod} breaks down if $\CK_T$ is not regular. For example, if
$\CK_T$ is trivial, and hence $\CF_G$ is a trivial $G$-bundle for $G=\SL(2)$, the LHS
$^{\vphantom{\alpha}}_\CalD\uoZ^\alpha_{\CK_{\on{triv}}}$ contains a point corresponding to a line subbundle
$\CL\subset\CV$ in a rank~2 vector bundle $\CV$
on $E$ that is a {\em nontrivial} extension of $\CO_E$ with $\CO_E$. But the RHS
$\oM(\Ind_T^G\CK_T,\CL_T)$ does not contain such a point.
\end{rem}

\subsection{Comparison of symplectic structures}
The reduced zastava space $^{\vphantom{\alpha}}_\CalD\uoZ^\alpha_\CK$
carries a symplectic structure by~Theorem~\ref{reductio}
and~Proposition~\ref{pois}, while the Feigin-Odesskii moduli space
$\oM(\Ind_T^G\CK_T,\CL_T)$ carries a symplectic structure
by~\S\ref{fo}. The rest of this Section is devoted to an identification of these two
symplectic structures. Namely, let $\{,\}_{\CK'}$ denote the Poisson bracket on
$^{\vphantom{\alpha}}_\CalD\uoZ^\alpha_\CK\simeq{}^C_\CalD\uoZ^\alpha_{\CK'}$ defined as the Hamiltonian
reduction of the bracket of~Proposition~\ref{pois}. Let $\{,\}_{FO}$ denote the Poisson
bracket on $\oM^\der(\Ind_T^G\CK_T,\CL_T)$ defined in~\S\ref{fo}. It restricts to the same named
Poisson bracket on the smooth open locus of $\oM^\der(\Ind_T^G\CK_T,\CL_T)$ where 
the derived structure is trivial.

\begin{thm}
  \label{myksas}
  The isomorphism of Proposition~\ref{feod} restricted to the smooth open loci of
  $^{\vphantom{\alpha}}_\CalD\uoZ^\alpha_\CK$ and $ \oM(\Ind_T^G\CK_T,\CL_T)$
  takes the Poisson structure $\{,\}_{\CK'}$ to $\{,\}_{FO}$.
\end{thm}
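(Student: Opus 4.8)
The plan is to verify the equality of the two Poisson brackets in \'etale rational coordinates on a dense open subset, using that both are algebraic Poisson structures which by \propref{pois} and by \secref{tangent} are nondegenerate (symplectic) on the common smooth locus, so that coincidence on any dense open suffices. The coordinates to use are the $(y_{i,r},w_{i,r})_{i\in I}^{1\le r\le a_i}$ (with $w_{i,r}\in E$) that $^{\vphantom{\alpha}}_\CalD\uoZ^\alpha_\CK$ inherits from the open elliptic zastava, subject to the two constraints recalled in \secref{euclid}: only the ratios $y_{i,r}/y_{i,r'}$ are well defined, and for each $i$ the sum $\sum_r w_{i,r}\in E$ is pinned down by $\CalD_i$. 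I will work over the locus where all the $w_{i,r}$ are pairwise distinct, and the target is the $\zeta$-formula displayed in the introduction. So the proof splits into two independent computations, one on each side, followed by a comparison; a dimension count against $2|\alpha|-2\on{rk}G$ shows the listed brackets determine the structures completely.

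The Coulomb side is essentially bookkeeping. By \thmref{reductio} the bracket $\{,\}_{\CK'}$ on $^{\vphantom{\alpha}}_\CalD\uoZ^\alpha_\CK\cong{}^C_\CalD\uoZ^\alpha_{\CK'}$ is the Hamiltonian reduction, with respect to the $T^{\on{ad}}$-action of \secref{ham red} whose moment map is $\on{AJ}_Z$, of the symplectic structure produced in \propref{pois} for the Coulomb open zastava $^C\!\oZ^\alpha_{\CK'}$. Over an analytic polydisc $W^\alpha\subset E^\alpha$ as in the proof of \propref{pois}, and using the explicit local equations of \secref{A2 cou}, that symplectic form is literally the standard rational zastava bracket $\{,\}_\rat$ of \secref{euclid} in the coordinates $(y_{i,r},w_{i,r})$; the only effect of the base being $E$ rather than $\BA^1$ is that the local coordinate $w$ with $dw$ equal to the fixed trivialization of $\bomega_E$ is quasi-periodic. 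Gluing the local charts over $E^\alpha$ away from the diagonals and then performing the reduction — freezing $\sum_r w_{i,r}$ and passing to $y$-ratios — forces the rational kernel $1/(w_{i,r}-w_{j,s})$ of the formula in \secref{euclid} to be replaced by the unique elliptic completion compatible with the constraint, namely the alternating combination $\zeta(w_{i,r'}-w_{j,r})-\zeta(w_{i,r'}-w_{j,p})-\zeta(w_{i,p'}-w_{j,r})+\zeta(w_{i,p'}-w_{j,p})$, which is genuinely doubly periodic once $\sum_r w_{i,r}$ and $\sum_r w_{j,r}$ are fixed. This yields the $\zeta$-formula for $\{,\}_{\CK'}$.

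The Feigin--Odesskii side is the substance of the proof. The input is the cohomological description of the symplectic form from \secref{tangent}: at a smooth point $\varphi$ it is the Serre-duality pairing on $T_\varphi M^\der(\CF_G,\CL_T)$ attached to the selfdual sequence $0\to\fb_\varphi\to\fg_\varphi\oplus\ft_\varphi\to\fb_\varphi^\svee\to0$, equivalently the differential $d_2$ of the hypercohomology spectral sequence of $\fn_\varphi\to\fg_\varphi\to\fg_\varphi/\fb_\varphi$. First, exactly as in the proof of \propref{feod}, regularity of $\CK_T$ splits $\fg_\varphi\cong\ft\oplus\bigoplus_{\alpha^\svee\in R^\vee}\CK^{\alpha^\svee}$, and likewise $\fn_\varphi\subset\fb_\varphi$, after the canonical reduction to $T$; the Borel reduction $\varphi$ then appears inside this split bundle as a twist governed precisely by the $(y,w)$-data. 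Second, one writes \v{C}ech $1$-cocycles, with respect to a two-chart cover of $E$ adapted to the points $w_{i,r}$, representing the coordinate tangent vectors: deforming a zero $w_{i,r}$ and rescaling a section value $y_{i,r}/y_{i,r'}$ produce explicit classes in $H^1(E,\CK^{\alpha_i^\svee})$ and, through the nonsplit extensions linking consecutive weight components, in the higher $H^1$'s indexed by longer roots. Third, one evaluates the cup products and Serre-duality residues entering $d_2$; the only analytic input is the standard description, via Serre duality and the trivialization of $\bomega_E$, of the relevant Green's function on $E$ in terms of the Weierstra\ss\ $\zeta$-kernel, the elliptic analogue of $1/(w-w')$. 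Assembling these reproduces the $\zeta$-formula and matches the Coulomb side; one also checks that the identification $\CalD_i=\CL^{-\omega^\svee_i}\otimes\CK^{\omega^\svee_i}$ of the regular case makes the $T$-quotient implicit on the Feigin--Odesskii side the mirror of the $T^{\on{ad}}$-reduction on the Coulomb side, so the two reduced brackets are compared on the nose.

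The main obstacle is the third paragraph, and within it two points. First, one must pin down the \v{C}ech representatives of $\partial_{w_{i,r}}$ and of $\partial_{y_{i,r}/y_{i,r'}}$ precisely enough to feed them into $d_2$: this requires tracking how a $B$-structure, presented as the Pl\"ucker line subbundles $\CL^{\omega^\svee_i}\hookrightarrow\CV^{\omega^\svee_i}_\CF$, deforms, and how the regularity-split weight filtration of each $\CV^{\omega^\svee_i}_\CF$ sits relative to $\fn_\varphi\subset\fb_\varphi\subset\fg_\varphi$. Second, one must identify the resulting cohomological pairing with the $\zeta$-kernel and match the quasi-periodicity bookkeeping (which alternating sum of $\zeta$'s is elliptic) to the ``$\sum_r w_{i,r}$ fixed'' constraint from reduction; this is in effect a clean redo of the original Feigin--Odesskii residue computation of \cite{fo}. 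As a partial shortcut one could instead first match the derived-symplectic structure of \secref{fo} with the Feigin--Odesskii Poisson structure on $\Bun_B$, making $M(\CF_G,\CL_T)$ a symplectic leaf as in the introduction, and then quote the $r$-matrix formula of \cite{fo} directly; but that identification of structures is itself nontrivial, so the self-contained \v{C}ech computation above is the intended route.
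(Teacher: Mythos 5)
Your overall architecture matches the paper's: compute both brackets in the \'etale coordinates $(y_{i,r},w_{i,r})$ over the diagonal-free locus, handle the Feigin--Odesskii side by feeding \v{C}ech representatives of the coordinate (co)vectors into the differential $d_2$ of the hypercohomology spectral sequence of $\fn_\varphi\to\fg_\varphi\to\fg_\varphi/\fb_\varphi$ together with Serre duality, and handle the Coulomb side via \propref{pois} and the reduction. But two steps, as written, do not go through. On the Coulomb side, the $\zeta$-kernel is not produced by ``gluing local rational charts and taking the unique elliptic completion of $1/(w_{i,r}-w_{j,s})$.'' The coordinates in which $\{,\}_{\CK'}$ is Darboux-like are the transversal-Hilbert/Coulomb coordinates $(z_{i,r},w_{i,r})$, with $\{z_{i,r},w_{i,r}\}=z_{i,r}$ and \emph{all} other brackets zero; the elliptic zastava coordinates $y_{i,r}$ differ from these by the twist built into the isomorphism of \thmref{reductio}, explicitly $y_{i,r'}=z_{i,r'}\,\phi'_{i,r'}(w_{i,\bullet})\prod_{r}\theta(w_{i,r'}-w_{j,r})$ after uniformization, and the cross-bracket $\{y_{i,r'},y_{j,r}\}_{\CK'}=y_{i,r'}y_{j,r}\,\zeta(w_{i,r'}-w_{j,r})$ is precisely the logarithmic derivative of that $\theta$-product. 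Since the elliptic $y$'s are never literally the local rational-zastava coordinates, there is no open set on which the bracket takes the rational form you propose to ``complete''; and a doubly periodic function with the prescribed simple poles is only unique up to additive functions of the remaining variables, so the uniqueness heuristic does not determine the answer. This step requires the explicit coordinate form of the isomorphisms of Lemmas~\ref{(4)} and~\ref{(5)}, which is exactly what the paper supplies.

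The second gap is on the Feigin--Odesskii side: you propose to run the $d_2$ computation directly for arbitrary simply laced $G$, ``through the nonsplit extensions \dots\ in the higher $H^1$'s indexed by longer roots.'' That computation is not set up and would involve all root spaces of $\fg_\varphi$, not just the ones visible in the coordinates. The paper avoids it by proving that the rational projections $\varPi^M_J$, $\varPi^Z_J$, $\varPi^C_J$ to the Levi moduli spaces for $J=\{i\}$ or $\{i,j\}$ are Poisson and mutually compatible (Lemma~\ref{two diagrams}), which reduces every bracket of an $i$-coordinate with a $j$-coordinate to the rank $1$ and rank $2$ cases; even for $\SL(3)$ the computation only closes because the relevant rows of the maps in $\fn_\varphi\to\fg_\varphi\to\fg_\varphi/\fb_\varphi$ vanish, collapsing $d_2$ to a three-term complex of line bundles. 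Without this Levi d\'evissage (or an equivalent reduction), the general-$G$ step of your plan is a placeholder rather than an argument; your proposed shortcut of quoting the $r$-matrix formula of \cite{fo} begs the question, as you acknowledge.
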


\begin{rem}
  The stack $^{\vphantom{\alpha}}_\CalD\uoZ^\alpha_\CK$ can be upgraded to a derived stack
  $({}^{\vphantom{\alpha}}_\CalD\uoZ^\alpha_\CK)^\der$ by its very definition (since the Abel-Jacobi
  morphism $\on{AJ}_Z\colon\oZ^\alpha_\CK\to\prod_{i\in I}\on{Pic}^{a_i}E$ is not smooth in general
  for $\on{rk}G>1$, its level set acquires a natural derived structure).
  Similarly, the stack of reduced Coulomb zastava $^C_\CalD\uoZ^\alpha_\CK$ can be upgraded
  to a derived stack $({}^C_\CalD\uoZ^\alpha_\CK)^\der$. The isomorphism of~Proposition~\ref{feod}
  can be upgraded to an isomorphism of derived stacks
  $({}^{\vphantom{\alpha}}_\CalD\uoZ^\alpha_\CK)^\der\cong\oM^\der(\Ind_T^G\CK_T,\CL_T)$.
  We also expect but cannot prove that the isomorphism of~Theorem~\ref{reductio} can be
  upgraded to an isomorphism of derived stacks
  $({}^{\vphantom{\alpha}}_\CalD\uoZ^\alpha_\CK)^\der\cong({}^C_\CalD\uoZ^\alpha_\CK)^\der$.
  Thus we expect a symplectomorphism of derived symplectic stacks
  $({}^C_\CalD\uoZ^\alpha_\CK)^\der\cong\oM^\der(\Ind_T^G\CK_T,\CL_T)$.
\end{rem}

\subsection{Compatibility of reduced zastava with Levi factors}
Given a subset $J\subset I$, we denote by $G\supset L_J\supset T$ the corresponding Levi factor. 
For $\alpha=\sum_{i\in I}a_i\alpha_i$, we define
$\alpha_J:=\sum_{i\in J}a_i\alpha_i$. The factorization of zastava for a decomposition
$\alpha=\alpha_J+\alpha_{I\setminus J}$ is a birational isomorphism
\[\oZ^\alpha_\CK\stackrel{\sim}{\dasharrow}\oZ^{\alpha_J}_\CK\times\oZ^{\alpha_{I\setminus J}}_\CK.\]
Composing with the projection onto $\oZ^{\alpha_J}_\CK$ we get a rational dominant morphism
$\oZ^\alpha_\CK\dasharrow\oZ^{\alpha_J}_\CK$.

Note that the derived subgroup $L'_J=[L_J,L_J]$ is also simply connected, and we
can consider its zastava space $\oZ^{\alpha_J}_{\CK_J}(L'_J)$, cf.~Remark~\ref{reductive}.
Here $\CK_J:=(\CK_i)_{i\in J}$ (recall
that $\CK_i=\CK^{-\alpha_i^\svee}$). The natural morphism $\oZ^{\alpha_J}_{\CK_J}(L'_J)\to\oZ^{\alpha_J}_\CK$
is an isomorphism, and we will use it to identify these moduli spaces.

The rational dominant morphism $\oZ^\alpha_\CK\dasharrow\oZ^{\alpha_J}_{\CK_J}(L'_J)$ induces
a rational dominant morphism of reduced zastava
\[\varPi^Z_J\colon{}^{\vphantom{\alpha}}_\CalD\uoZ^\alpha_\CK\dasharrow
         {}^{\vphantom{\alpha}}_{\CalD_J}\uoZ^{\alpha_J}_{\CK_J}(L'_J).\]
Here $\CalD_J$ stands for $(\CalD_i)_{i\in J}$.

Furthermore, the factorization property of Coulomb zastava similarly gives rise to a rational
dominant morphism $^C\!\oZ^\alpha_\CK\dasharrow{}^C\!\oZ^{\alpha_J}_{\CK_J}(L'_J)$ that in turn gives rise
to a rational dominant morphism of reduced Coulomb zastava
\[\varPi^C_J\colon{}^C_\CalD\uoZ^\alpha_\CK\dasharrow{}^{\hphantom{D}C}_{\CalD_J}\!\uoZ^{\alpha_J}_{\CK_J}(L'_J).\]
Both morphisms $\varPi^Z_J,\varPi^C_J$ are Poisson by construction.

\subsection{Compatibility of Feigin-Odesskii moduli spaces with Levi factors}
\label{compa}
For a degree zero regular $T$-bundle $\CK_T$ and $J\subset I$ we consider
the Feigin-Odesskii moduli stack
$\oM_J^\der(\Ind_T^{L_J}\CK_T,\CL_T)$ for the Levi factor $L_J$. We have a rational dominant morphism
\[\varPi^M_J\colon \oM^\der(\Ind_T^G\CK_T,\CL_T)\dasharrow\oM_J^\der(\Ind_T^{L_J}\CK_T,\CL_T)\] constructed
as follows.

Let $P_J\supset B$ denote the corresponding parabolic subgroup, and let $U_J$ denote
the unipotent radical of $P_J$. Then the coinvariants $V^{\lambda^\svee}_{U_J}$ carry a natural
action of $L_J$ and form an irreducible $L_J$-module with lowest weight $w_0\lambda^\svee$
(and with highest weight $w_Jw_0\lambda^\svee$). The natural projection
$V^{\lambda^\svee}\to V^{\lambda^\svee}_{U_J}$ gives rise to the projection
$\xi^{\lambda^\svee}_J\colon\CV^{\lambda^\svee}_\CF\to\CV^{\lambda^\svee}_{\CF,U_J}$. Composing with the
embedding $\CL^{\lambda^\svee}\hookrightarrow\CV^{\lambda^\svee}_\CF$ we obtain a morphism
$\CL^{\lambda^\svee}\to\CV^{\lambda^\svee}_{\CF,U_J}$. However, this morphism is not necessarily an
embedding of a line subbundle; in general it is only an embedding of an invertible subsheaf.
Hence in general it gives rise to a {\em generalized} $B$-structure in the $L_J$-bundle
$\Ind_T^{L_J}\CK_T$. Thus we obtain a morphism
\[\varPi^M_J\colon \oM^\der(\Ind_T^G\CK_T,\CL_T)\to\ol{M}{}_J^\der(\Ind_T^{L_J}\CK_T,\CL_T)\]
to the Drinfeld closure of $\oM_J^\der(\Ind_T^{L_J}\CK_T,\CL_T)$. The latter closure
is defined as the open substack in the homotopy fibre product
of $[\Ind_T^{L_J}\CK_T]\times[\CL_T]$ and $\ol\Bun_{B_J}$ over
$\Bun_{L_J}\times\Bun_T$ (cf.~(\ref{homotopy})) given by the condition that the generalized
$B_J$-structure is generically transversal to the tautological $U_{J-}$-structure in
$\Ind_T^{L_J}\CK_T$.

It remains to check that $\varPi^M_J$ is dominant, i.e.\ gives rise to the desired rational
morphism from $\oM^\der(\Ind_T^G\CK_T,\CL_T)$ to $\oM_J^\der(\Ind_T^{L_J}\CK_T,\CL_T)$.
This follows from~Lemma~\ref{two diagrams}(b) below, i.e.\ compatibility of $\varPi^M_J$ with
$\varPi^Z_J$, along with the dominance property of $\varPi^Z_J$.

Comparing with construction of Poisson structure $\{,\}_{FO}$ in~\S\ref{fo},\ref{tangent} we
see that $\varPi^M_J$ is a Poisson morphism. Indeed, we have to check that for a smooth point
$\varphi\in\oM(\Ind_T^G\CK_T,\CL_T)$ such that $\varPi^M_J$ is regular at $\varphi$,
the Poisson bivector $P_J\colon T^*_{\varPi^M_J\varphi}\oM_J(\Ind_T^{L_J}\CK_T,\CL_T)\to
T_{\varPi^M_J\varphi}\oM_J(\Ind_T^{L_J}\CK_T,\CL_T)$ equals the composition
$d\varPi^M_J\circ P_I\circ(d\varPi^M_J)^*$. To this end note that we have a natural projection
of vector bundles on $E$:
\[\Xi\colon\fg_\varphi\twoheadrightarrow(\fl_J)_\varphi,\]
and the condition that $\varPi^M_J$ is regular at $\varphi$ guarantees that
$\Xi(\fb_\varphi)=(\fb_{L_J})_{\varPi^M_J\varphi}$ and $\Xi(\fn_\varphi)=(\fn_{L_J})_{\varPi^M_J\varphi}$.
Moreover, under the identification~(\ref{total}), the differential
\[d\varPi^M_J\colon T_\varphi\oM(\Ind_T^G\CK_T,\CL_T)\to
T_{\varPi^M_J\varphi}\oM_J(\Ind_T^{L_J}\CK_T,\CL_T)\]
is induced by $\Xi$. Furthermore, under the identification~(\ref{total'}), $d\varPi^M_J$ is
also induced by $\Xi$, provided we identify $\fb_\varphi^\svee$ with $\fg_\varphi/\fn_\varphi$.
The Poisson property of $\varPi^M_J$ follows.

\begin{lem}
  \label{two diagrams}
  The following diagrams commute:

  \textup{(a)}
  $\begin{CD}
^{\vphantom{\alpha}}_\CalD\uoZ^\alpha_\CK @>>{\varPi^Z_J}> ^{\vphantom{\alpha}}_{\CalD_J}\uoZ^{\alpha_J}_{\CK_J}(L'_J)\\
  @VV{\wr}V @V{\wr}VV \\
  ^C_\CalD\uoZ^\alpha_{\CK'} @>{\varPi^C_J}>> ^{\hphantom{D}C}_{\CalD_J}\!\uoZ^{\alpha_J}_{\CK'_J}(L'_J),
  \end{CD}$

  \textup{(b)}
  $\begin{CD}
^{\vphantom{\alpha}}_\CalD\uoZ^\alpha_\CK @>>{\varPi^Z_J}> ^{\vphantom{\alpha}}_{\CalD_J}\uoZ^{\alpha_J}_{\CK_J}(L'_J)\\
  @VV{\wr}V @V{\wr}VV \\
  \oM(\Ind_T^G\CK_T,\CL_T) @>{\varPi^M_J}>> \oM_J(\Ind_T^{L_J}\CK_T,\CL_T).
  \end{CD}$
\end{lem}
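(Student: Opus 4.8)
The plan is to verify each square after passing to a dense open substack on which the rational morphisms $\varPi^Z_J,\varPi^C_J,\varPi^M_J$ are regular and every zastava space splits as a product under factorization; commutativity will then follow from the factorization‑compatibility of the identifications already established. Concretely, I would work over the open locus $X^\circ\subset\on{AJ}^{-1}(\CalD)$ of configurations avoiding all diagonals of $E^\alpha$; in particular $X^\circ$ lies over $(E^{\alpha_J}\times E^{\alpha_{I\setminus J}})_{\on{disj}}$. Since the four corners of each diagram are irreducible and the vertical arrows are isomorphisms, it suffices to check commutativity over $X^\circ$.

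For (a): over $X^\circ$ the Mirkovi\'c zastava ${}^{\on{Mir}}\oZ^\alpha_\CK$ and the Coulomb zastava ${}^C\oZ^\alpha_{\CK'}$ split canonically into their $J$‑ and $(I\setminus J)$‑parts by the factorization property of $\BV_\CK^\alpha$ and $\BU_{\CK'}^\alpha$ (\S\ref{mir com}, \S\ref{coul zas}), and $\varPi^Z_J$, $\varPi^C_J$ are by construction the projections onto the $J$‑factor. By the argument of \S\ref{the proof}, the upper vertical isomorphism of Theorem~\ref{reductio} is assembled summand by summand out of the isomorphisms $\phi_{\beta,\gamma}$, $\psi_{\beta,\gamma}$ of Lemmas~\ref{(4)}(a), \ref{(5)}(a) and the Segre equations~\eqref{Segre eq}. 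I would then invoke the factorization identities of Lemmas~\ref{(4)}(b), \ref{(5)}(b) together with the factorization form of the Segre equations in \S\ref{segre}: over $X^\circ$ these say exactly that the isomorphism decomposes as the tensor product of its $J$‑ and $(I\setminus J)$‑parts, so that the upper vertical arrow restricted to $X^\circ$ equals the product of the lower vertical arrow for $L'_J$ and the analogous one for the complement. Comparing the $T^{\on{ad}}$‑actions of \S\ref{ham red} and passing to quotients then yields the commutativity. A preliminary bookkeeping step here is to check that the line‑bundle tuple $\CK'_J$ obtained on the $J$‑factor is indeed the output of the Theorem~\ref{reductio} recipe for the subquiver $Q_J$ with level $\CalD_J$: this amounts to verifying that, after restriction of the factorized bundles to the $J$‑factor, the surviving twists by $\CalD_{\on{i}(h)}^{-1}$ in Proposition~\ref{bundles} are exactly those coming from arrows internal to $J$, the remaining ones having been absorbed into the twist $\CO(\beta')$.

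For (b): both vertical arrows are the isomorphisms of Proposition~\ref{feod}, for $G$ and for $L_J$. I would compute both composites on a generic $\varphi$, writing $\CF=\Ind_T^G\CK_T$. Under Proposition~\ref{feod}, $\varphi$ corresponds to the data of the canonical weight‑projections $\xi^{\lambda^\svee}\colon\CV_\CF^{\lambda^\svee}\twoheadrightarrow\CK^{\lambda^\svee}$ (available because $\CK_T$ is regular) together with the embeddings $\CL^{\lambda^\svee}\hookrightarrow\CV_\CF^{\lambda^\svee}$ coming from the $B$‑structure $\varphi$. Going down then right gives the zastava point obtained from this data by the factorization projection $\varPi^Z_J$; going right then down gives the zastava point attached by Proposition~\ref{feod} for $L_J$ to the generalized $B_J$‑structure $\CL^{\lambda^\svee}\hookrightarrow\CV_\CF^{\lambda^\svee}\to\CV^{\lambda^\svee}_{\CF,U_J}$ produced by $\varPi^M_J$, namely the weight‑projections of $\CV^{\lambda^\svee}_{\CF,U_J}$ (the regularity of $\CK_T$ passing to $\Ind_T^{L_J}\CK_T$) together with the induced embeddings. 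These agree because the zastava factorization morphism $\varPi^Z_J$, read on the associated vector bundles $\CV^{\lambda^\svee}$, is realized precisely by the $U_J$‑coinvariant pushforward — the standard compatibility of zastava factorization with Levi subgroups. Since the check is done on a dense open over which $\varPi^Z_J$ is dominant onto the genuine (non‑generalized) locus, it also shows that the image of $\varPi^M_J$ meets $\oM_J$, which is what \S\ref{compa} needs.

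I expect the main obstacle to be in (a): making precise the sense in which the isomorphism of Theorem~\ref{reductio} ``commutes with factorization'', and carrying out the attendant bookkeeping of the diagonal divisors $\Delta^{\beta}_{\on{o}(h)\on{i}(h)}$, $\Delta^{\beta,\gamma}_{\on{o}(h)\on{i}(h)}$ and line bundles $\CalD_i$ as the factorizable bundles $\BV_\CK^\alpha$, $\BU_{\CK'}^\alpha$ are restricted to the $J$‑factor, so that the two applications of Theorem~\ref{reductio} really match. Granting that, (b) reduces to the explicit descriptions of all the maps together with the standard Levi‑compatibility of zastava factorization.
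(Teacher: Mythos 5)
Your proposal is correct and follows essentially the same route as the paper, whose proof of (a) is precisely the assertion that the isomorphism of Theorem~\ref{reductio} is compatible with factorization (which, as you note, comes from the factorizability statements in Lemmas~\ref{(4)}(b), \ref{(5)}(b) and the Segre equations checked away from the diagonals), and whose proof of (b) is the definition of the factorization isomorphism and its standard compatibility with Levi subgroups. Your more detailed unwinding, including the bookkeeping check that the tuple $\CK'_J$ matches the Theorem~\ref{reductio} recipe for the subquiver, is a faithful expansion of the paper's two-line argument.
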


\begin{proof}
  (a) follows from the fact that the isomorphism of Theorem~\ref{reductio} is compatible with
  factorization.

  (b) follows from the definition of factorization isomorphism, cf.\ the proof
  of~\cite[Proposition~3.2]{bdf}.
\end{proof}

\subsection{Proof of Theorem~\ref{myksas} for $G=\SL(2)$}
\label{prf sl2}
The only vertex of the Dynkin diagram is denoted by $i$. The corresponding simple root and
fundamental weight are denoted simply by $\alpha^\svee$ and $\omega^\svee$.
A regular $T$-bundle is a line bundle $\CK=\CK^{\omega^\svee}$ such that $\CK_i=\CK^{-2}=\CK^{-\alpha^\svee}$
is nontrivial. We fix a line bundle $\CL$ of degree $-a$, and we set $\CalD=\CL^{-1}\CK$.
A point $\varphi$ of $^{\vphantom{\alpha}}_\CalD\uoZ^{a}_\CK\cong\oM(\CK\oplus\CK^{-1},\CL)$ is
represented by a short exact sequence
\[0\to\CL\xrightarrow{(s,t)}\CK\oplus\CK^{-1}\xrightarrow{(-t,s)}\CL^{-1}\to0.\]
The associated adjoint vector bundle has a 2-step filtration
$0\subset\fn_\varphi\subset\fb_\varphi\subset\fg_\varphi$ with associated graded quotients
$\fn_\varphi\simeq\CL^2,\ \fb_\varphi/\fn_\varphi\simeq\CE nd(\CL)\simeq\CO_E,\
\fg_\varphi/\fb_\varphi\simeq\CL^{-2}$. It gives rise to the connecting homomorphisms
\[\delta\colon H^0(E,\CL^{-2})\to H^1(E,\CE nd(\CL)),\ H^0(E,\CE nd(\CL))\to H^1(E,\CL^2).\]
If $\varphi$ is a smooth point of $\oM(\CK\oplus\CK^{-1},\CL)$, then the tangent space is
\[T_\varphi\oM(\CK\oplus\CK^{-1},\CL)=
\on{Ker}\big(H^0(E,\CL^{-2})\to H^1(E,\CE nd(\CL))\big)/\BC s\circ t,\]
and dually the cotangent space is \[T_\varphi^*\oM(\CK\oplus\CK^{-1},\CL)=
\big((\BC s\circ t)^\perp\subset H^1(E,\CL^2)\big)/H^0(E,\CE nd(\CL)).\]
Also, we have a splitting
\begin{multline}
  \label{quatro}
  H^0(E,\CL^{-1}\CK)/\BC s\oplus H^0(E,\CL^{-1}\CK^{-1})/\BC t\iso
  T_\varphi\oM(\CK\oplus\CK^{-1},\CL),\\
  (\varpi,\varrho)\mapsto s\circ\varrho-t\circ\varpi.
\end{multline}
Explicitly, given $\varpi\in 
\Hom(\CL,\CK)$ and $\varrho\in 
\Hom(\CL,\CK^{-1})$, we construct an infinitesimal deformation
$(s_\varepsilon,t_\varepsilon)$ of $(s,t)\colon\CL\to\CK\oplus\CK^{-1}$ over
$\BC[\varepsilon]/(\varepsilon^2)$ as
\[s_\varepsilon=s+\varpi\varepsilon\colon\CL\to\CK,\
t_\varepsilon=t+\varrho\varepsilon\colon\CL\to\CK^{-1}.\]

\subsubsection{Coordinates}
\label{coor}
Let $D$ be the zero divisor of $s\in\Hom_E(\CL,\CK)$; we assume that $D$ is multiplicity free and we
choose a numbering $w_1,\ldots,w_a$ of its points.
The functions $w_1,\ldots,w_{a-1}\colon\oM(\CK\oplus\CK^{-1},\CL)\to E$ are defined \'etale locally
(and $w_a$ is determined by $w_1,\ldots,w_{a-1}$ since the sum
$\sum_{r=1}^aw_r\in E$ is fixed).

We also fix a section $u$ of $\CL^{-1}\CK^{-1}$ with zeros disjoint from $D$ and define the
homogeneous functions $y_r:=\frac{t}{u}|_{w_r}\colon\oM(\CK\oplus\CK^{-1},\CL)\to\BC^\times$. 
Since the reduced zastava is a quotient by the
$\BG_m$-action, only the ratios of $y$-coordinates are well defined (\'etale locally).
Alternatively, we can normalize $t$ in such a way that $\sum_{r=1}^a\frac{t}{u}|_{w_r}=1$, and
consider the resulting functions $y_1,\ldots,y_{a-1}$ together with $w_1,\ldots,w_{a-1}$ as \'etale
local coordinates on $\oM(\CK\oplus\CK^{-1},\CL)$.
The above normalization of $t$ is possible (the sum does not vanish identically) since
$\CL^{-1}\CK^{-1}$ is not isomorphic to $\CL^{-1}\CK$, hence the restriction map
$H^0(E,\CL^{-1}\CK^{-1})\to\BC^a,\ t\mapsto t|_D$, is an isomorphism.

The tangent space to $E^{(a)}$ at $D$ can be identified with
$H^0(D,\CO_E(D)|_D)=H^1(E,\CO_E\to\CO_E(D))$ (the complex $\CO_E\to\CO_E(D)$ lives in degrees
$0,1$). The tangent vector corresponding to the infinitesimal deformation $D_\varepsilon$ equal
to the zero divisor of the section $s_\varepsilon$ (considered right after~(\ref{quatro}))
is given by the 1-cocycle $(0,\frac{\varpi}{s})$. In other words, the corresponding element
of $H^0(D,\CO_E(D)|_D)$ is the polar part of $-\frac{\varpi}{s}$. Note that this is the same
as the polar part of $\frac{(s\circ\varrho-t\circ\varpi)}{st}$.
Thus the tangent map to the factorization morphism $\oM(\CK\oplus\CK^{-1},\CL)\to E^{(a)},\
(s,t)\mapsto D$, sends $s\circ\varrho-t\circ\varpi$ to 
$\frac{(s\circ\varrho-t\circ\varpi)|_D}{s't|_D}$, where $s'$ is the nowhere vanishing section
of $\CL^{-1}\CK(-D)$ corresponding to $s$. It means that the image of the tangent vector
$\partial/\partial w_r$ under the composition
\[T_\varphi\oM(\CK\oplus\CK^{-1},\CL)\to H^0(E,\CL^{-1}\CK)/\BC s\to
H^0(D,\CL^{-1}\CK|_D)\xrightarrow{1/s'|_D}H^0(D,\CO_E(D)|_D)\]
is the principal part of the unique (up to an additive constant) rational function on $E$ that
has a simple pole with residue 1 at $w_r$ and a simple pole with residue $-1$ at $w_a$ and no
other poles (we use the trivialization of $\bomega_E$, see~Remark~\ref{triv can}).

Dually, $dw_r$ is the image of $(1|_{w_r}-1|_{w_a})$ under the composition
\begin{multline*}H^0(D,\CO_E|D)\xrightarrow{1/s'|_D}H^0(D,\CL\CK^{-1}(D)|_D)\xrightarrow{1/t|_D}
  H^0(D,\CL^2(D)|_D)/H^0(E,\CO_E)\\
  \to H^1(E,\CL^2)/H^0(E,\CO_E),
\end{multline*}
where the last arrow is the connecting homomorphism for the short exact sequence
\[0\to\CL^2\to\CL^2(D)\to\CL^2(D)|_D\to0.\]

The image of the tangent vector $\partial/\partial y_r$ under the composition
\begin{multline}
  \label{cinquo}
  T_\varphi\oM(\CK\oplus\CK^{-1},\CL)\to H^0(E,\CL^{-1}\CK^{-1})/\BC t\to
  H^0(D,\CL^{-1}\CK^{-1}|_D)/\BC t|_D\\
  \xrightarrow{1/u|_D}H^0(D,\CO_E|_D)/\BC\frac{t}{u}\Big|_D
\end{multline}
is $1|_{w_r}-1|_{w_a}\pmod{\frac{t}{u}|_D}$. Indeed, at a point of $\oM(\CK\oplus\CK^{-1},\CL)$
given by a pair of maps $(s,t):\CL\to \CK \oplus \CK^{-1}$, the tangent vector
$\partial/\partial y_r$ is represented by the linear term of the infinitesimal
deformation $(s_\varepsilon,t_\varepsilon)\colon\CL\to \CK \oplus \CK^{-1}$, where $s_\varepsilon=s,\
t_\varepsilon(w_i)=t(w_i)$ for $i\not=r,a$, while $t_\varepsilon(w_r)=t(w_r)+\varepsilon u(w_r)$, and
$t_\varepsilon(w_a)=t(w_a)-\varepsilon u(w_a)$. Restricting this linear term to $D$
and dividing by $u|_D$, we obtain $1|_{w_r}-1|_{w_a}$. 

\subsubsection{Computation of the Feigin-Odesskii bracket}
According to the last paragraph of~\S\ref{tangent}, the Feigin-Odesskii Poisson bracket is defined
using the differential $d_2$ of the second page of the hypercohomology spectral sequence for
the complex
\[\CL^2\xrightarrow{(-t^2,st,s^2)}\CK^{-2}\oplus\CO_E\oplus\CK^2\xrightarrow{(s^2,2st,-t^2)}\CL^{-2}.\]
Consider the commutative diagram
\begin{equation}
  \label{septo}
  \begin{CD}
    \CL^2 @>(-t^2,st,s^2)>> \CK^{-2}\oplus\CO_E\oplus\CK^2 @>(s^2,2st,-t^2)>> \CL^{-2}\\
    @VVV @VVV @| \\
    \CL^2(D) @>(-t^2,s't,s's)>> \CK^{-2}(D)\oplus\CO_E\oplus\CK^2 @>(s's,2st,-t^2)>> \CL^{-2}\\
    @VVV @VV(-(t|_D)^{-2},0,0)V @.\\
    \CL^2(D)|_D @= \CL^2(D)|_D. @.
  \end{CD}
\end{equation}
We set $\CH:=\fg_\varphi=\CK^{-2}\oplus\CO_E\oplus\CK^2,\ \CH':=\CK^{-2}(D)\oplus\CO_E\oplus\CK^2$.
One can show by the diagram chase that the following diagram commutes:
\begin{equation}
  \label{octo}
  \begin{CD}
    \on{Ker}\big(H^1(E,\CL^2)\to H^1(E,\CH)\big) @>{d_2}>> H^0(E,\CL^{-2})/H^0(E,\CH)\\
    @AAA @A{s's}AA \\
    \on{Ker}\big(H^0(D,\CL^2(D)|_D)\to H^1(E,\CH)\big) @>>> H^0(E,\CH')/H^0(E,\CH).
  \end{CD}
\end{equation}

Recall that the Hamiltonian vector field $h_r$ of $dw_r$ is the image of $1|_{w_r}-1|_{w_a}$
under the composition
\[\begin{CD}
@. \on{Ker}\big(H^1(E,\CL^2)\to H^1(E,\CH)\big) @>{d_2}>> H^0(E,\CL^{-2})/H^0(E,\CH)\\
@. @AAA @. \\
H^0(D,\CO_E|_D) @>{1/s't|_D}>> \on{Ker}\big(H^0(D,\CL^2(D)|_D)\to H^1(E,\CH)\big). @.
\end{CD}\]
Due to commutativity of~(\ref{octo}), we can replace this composition with
\[\begin{CD}
@. @. H^0(E,\CL^{-2})/H^0(E,\CH)\\
@. @. @A{s's}AA \\
H^0(D,\CO_E|_D) @>{1/s't|_D}>> \on{Ker}\big(H^0(D,\CL^2(D)|_D)\to H^1(E,\CH)\big) @>>>
H^0(E,\CH')/H^0(E,\CH).
\end{CD}\]
It follows that $h_r$ gives a section of $\CL^{-2}$ divisible by $s$, say $h_r=s\circ\varrho$.
This means that in the splitting~(\ref{quatro}), $h_r$ lies in the second summand. In particular,
\[\{w_r,w_{r'}\}_{FO}=0\ \on{for}\ \on{any}\ r,r'.\] Furthermore, one can see from~(\ref{septo}) that
$\varrho$ is the section of $\CL^{-1}\CK^{-1}$ taking value $t|_{w_r}$ at $w_r$ and
$-t|_{w_a}$ at $w_a$. Composing this claim with~(\ref{cinquo}) we get
\[\{y_r,x_{r'}\}_{FO}=0\ \on{for}\ r\ne r',\ \on{and}\ \{y_r,w_r\}_{FO}=y_r.\]
The remaining brackets
\[\{y_r,y_{r'}\}_{FO}=0.\]
Indeed, we have proved that $d_2$ sends the first
summand of the splitting~(\ref{quatro}) to the second one in the dual splitting. But the splitting
is symmetric with respect to swapping the roles of $s$ and $t$ (and replacing the divisor $D$
with the zero divisor of $t$). This shows that $d_2$ sends the second summand to the first one,
so the brackets of $y$-coordinates vanish.

\subsubsection{Comparison with the reduced transversal Hilbert scheme}
According to~Proposition~\ref{hilb vs zas}(c), the reduced zastava is isomorphic to the
reduced transversal Hilbert scheme $^{\vphantom{\alpha}}_\CalD\ul\Hilb{}_\tr^a(\oS_{\CK'})$, where
$\CK':=\CK'_i=\CK^{-\alpha^\svee}\otimes\CalD$. The symplectic structure $\omega_{\CK'}$ on the surface
$\oS_{\CK'}$ defined in~\S\ref{poisson} gives rise to a symplectic structure on the
transversal Hilbert scheme and on its reduction. The corresponding bracket is denoted
$\{,\}_{\CK'}$. On the other hand, according to~(\ref{hilb vs coul}), the (reduced) transversal
Hilbert scheme is nothing but the (reduced) Coulomb open zastava, and this identification respects
the Poisson brackets.

To compare $\{,\}_{\CK'}$ with $\{,\}_{FO}$ we match the local coordinates. We choose a local
trivialization
$\eta$ of $\CK'=\CK'_i=\CL^{-1}\CK^{-1}$. We denote by $p\colon\oS_{\CK'}\to E$ the projection.
The corresponding local coordinate $z$ on $\oS_{\CK'}$ is $z=\eta_{\on{can}}/p^*\eta$, where
$\eta_{\on{can}}$ is the tautological section of $p^*\CK'$. On the \'etale open
$(\oS_{\CK'})^a\setminus\Delta\to\Hilb^a(\oS_{\CK'})$ we have the induced local coordinates
$w_1,\ldots,w_a,z_1,\ldots,z_a$. We have $\{z_r,w_r\}_{\CK'}=z_r$, and all the other brackets vanish.

On the reduced transversal Hilbert scheme $^{\vphantom{\alpha}}_\CalD\ul\Hilb{}_\tr^a(\oS_{\CK'})$
we have the constraint that $w_1+\ldots+w_a$ is a fixed point of $E$. These coordinates clearly
match the same named coordinates on the reduced zastava of the previous subsections.

Now recall that the identification of reduced zastava with the reduced Hilbert scheme
in~Proposition~\ref{hilb vs zas}(c) is obtained in the following way. Given a point of reduced
zastava represented by $\varphi=(s,t)$ we fix an isomorphism $\varsigma\colon\CO_E(D)\iso\CalD$
and consider the image of $D\times\{1\}\subset D\times\BG_m$ under the isomorphism
\[(\varsigma\cdot t/s)|_D\colon D\times\BG_m\iso\CK'|_D\]
considered up to $\BG_m$-action ($\CK'|_D$ stands for the total space of the line bundle).
Here we view $t/s$ as a section of $\CK^{-2}(D)$. In fact, we can take $\varsigma=s$, so that
our point corresponds to the trivialization of $\CK'|_D$ given by
$t\in H^0(E,\CL^{-1}\CK^{-1})=H^0(E,\CK')$.

But if we use a local section $u\in H^0(E,\CL^{-1}\CK^{-1})$ as in~\S\ref{coor} to define
the local trivialization $\eta$ above, the value of the above coordinate $z_r$ at $\varphi=(s,t)$
equals $t/u(w_r)$. This coincides with the value of the coordinate $y_r$ of~\S\ref{coor} at $\varphi$.
In other words, the identification of reduced zastava with reduced transversal Hilbert scheme
takes the $(w,y)$-coordinates to $(w,z)$-coordinates, and the bracket $\{,\}_{FO}$ to $\{,\}_{\CK'}$.

This completes the proof of Theorem~\ref{myksas} for $G=\SL(2)$.

\subsection{Proof of Theorem~\ref{myksas} for $G=\SL(3)$}
\label{prf sl3}
The vertices of the Dynkin diagram are denoted by $i,j$. A regular $T$-bundle $\CK_T$ is specified by
the line bundles $\CK^{\omega_i^\svee}$ and $\CK^{\omega_j^\svee}$ such that
$\CK^{\alpha_i^\svee}=\CK^{2\omega_i^\svee}\CK^{-\omega_j^\svee},\
\CK^{\alpha_j^\svee}=\CK^{2\omega_j^\svee}\CK^{-\omega_i^\svee},\
\CK^{\alpha_i^\svee+\alpha_j^\svee}=\CK^{\omega_i^\svee}\CK^{\omega_j^\svee}$ are all nontrivial.
We fix line bundles $\CL_i=\CL^{\omega_i^\svee}$ and $\CL_j=\CL^{\omega_j^\svee}$ of degrees $-a_i,-a_j$,
we set $\alpha=a_i\alpha_i+a_j\alpha_j$ and $\CalD_i=\CL_i^{-1}\CK^{\omega_i^\svee},\
\CalD_j=\CL_j^{-1}\CK^{\omega_j^\svee}$.
We set $\CV=\CV^{\omega_i^\svee}=\CK^{\omega_i^\svee}\oplus\CK^{\omega_j^\svee-\omega_i^\svee}\oplus\CK^{-\omega_j^\svee}
=:\CK_1\oplus\CK_2\oplus\CK_3$. A point $\varphi$ of
$^{\vphantom{\alpha}}_\CalD\uoZ^{\alpha}_\CK\cong\oM(\Ind_T^G\CK_T,\CL_T)$ is represented by a complex
\begin{equation}
  \label{LVL}
  \CL_i\xrightarrow{(s_1,s_2,s_3)}\CV\xrightarrow{(t_1,t_2,t_3)}\CL_j^{-1}.
\end{equation}
Here $s_c\in H^0(E,\CL_i^{-1}\CK_c)$ and $t_d\in H^0(E,\CK_d^{-1}\CL_j^{-1})$ have no common zeros
and satisfy the equation $s_1t_1+s_2t_2+s_3t_3=0\in H^0(E,\CL_i^{-1}\CL_j^{-1})$.
The associated adjoint vector bundle
\begin{equation}
  \label{ad decomp}
  \fg_\varphi=\CO_E^{\oplus2}\oplus\bigoplus_{1\leq c\ne d\leq 3}\CK_c\CK_d^{-1}
\end{equation}
(traceless endomorphisms
of $\CV$) has a 2-step filtration $0\subset\fn_\varphi\subset\fb_\varphi\subset\fg_\varphi$,
and the Poisson bivector $\{,\}_{FO}$ comes from the differential $d_2$ of the second page of the
hypercohomology spectral sequence for the complex
\begin{equation}
  \label{9}
  \fn_\varphi\to\fg_\varphi\to\fg_\varphi/\fb_\varphi.
\end{equation}

\subsubsection{Coordinates}
\label{coordi}
We use the morphisms $\varPi^M_{\{i\}}$ and $\varPi^M_{\{j\}}$ of~\S\ref{compa}. The targets are
the Feigin-Odesskii moduli spaces of type $A_1$ studied in~\S\ref{prf sl2}. In particular,
the coordinates on them are defined in~\S\ref{coor}, and we define the coordinates on
$^{\vphantom{\alpha}}_\CalD\uoZ^{\alpha}_\CK\cong\oM(\Ind_T^G\CK_T,\CL_T)$ as the pullbacks of the
coordinates of~\S\ref{coor}. Thus we get the \'etale local coordinates
$w_{i,1},\ldots,w_{i,a_i}$ (subject to the condition that their sum in $E$ is fixed),
$w_{j,1},\ldots,w_{j,a_j}$ (also subject to the condition that their sum in $E$ is fixed),
$y_{i,1},\ldots,y_{i,a_i}$ (homogeneous, i.e.\ only the ratios are well defined),
$y_{j,1},\ldots,y_{j,a_j}$ (also homogeneous).

More explicitly, $w_{i,1},\ldots,w_{i,a_i}$ are the zeros of $s_1$, while
$w_{j,1},\ldots,w_{j,a_j}$ are the zeros of $t_3$. We impose the genericity assumption that all
the points $w_{i,1},\ldots,w_{i,a_i},w_{j,1},\ldots,w_{j,a_j}$ are distinct.
Furthermore, we choose sections
$u_i\in H^0(E,\CL_i^{-1}\CK_2)$ and $u_j\in H^0(E,\CK_2^{-1}\CL_j^{-1})$. We consider the open
substack of $\oM(\Ind_T^G\CK_T,\CL_T)$ specified by the condition that all the $w$'s are
distinct and also distinct from the zeros of $u_i$ and $u_j$.
Finally, $y_{i,r}=\frac{s_2}{u_i}|_{w_{i,r}},\ y_{j,r}=\frac{t_2}{u_j}|_{w_{j,r}}$.

The only nonvanishing Feigin-Odesskii brackets of $i$-coordinates (resp.\ $j$-coordinates) are
$\{y_{i,r},x_{i,r}\}_{FO}=y_{i,r}$ (resp.\ $\{y_{j,r},x_{j,r}\}_{FO}=y_{j,r}$) since
$\varPi^M_{\{i\}}$ (resp.\ $\varPi^M_{\{j\}}$) is Poisson. It remains to compute the brackets of
$i$-coordinates with $j$-coordinates. This computation will occupy the rest of this Section.

\subsubsection{Brackets with $w$-coordinates}
We extend the complex~(\ref{9}) to a diagram
\begin{multline*}
\CH om((\CK_2^{-1}\oplus\CK_3^{-1})/\CL_j,\CL_j)\to\CH om(\CV^\vee/\CL_j,\CL_j)\to\fn_\varphi\to\fg_\varphi\\
  \to\fg_\varphi/\fb_\varphi\to\CH om(\CL_i,\CV/\CL_i)\to\CH om(\CL_i,(\CK_1\oplus\CK_2)/\CL_i).
\end{multline*}
Note that we have isomorphisms of line bundles
$(\CK_2^{-1}\oplus\CK_3^{-1})/\CL_j\simeq\CL_j^{-1}\CK_2^{-1}\CK_3^{-1}$ and
$(\CK_1\oplus\CK_2)/\CL_i\simeq\CL_i^{-1}\CK_1\CK_2$. Hence composing the first three and the
last three arrows in the above diagram we obtain a complex
\[\CL_j^2\CK_2\CK_3\xrightarrow{A}\fg_\varphi\xrightarrow{B}\CL_i^{-2}\CK_1\CK_2.\]
With respect to the decomposition~(\ref{ad decomp})
\[\fg_\varphi\subset\begin{matrix}
&\CK_1^{-1}\CK_1&\oplus&\CK_2^{-1}\CK_1&\oplus&\CK_3^{-1}\CK_1 \\
\oplus&\CK_1^{-1}\CK_2&\oplus&\CK_2^{-1}\CK_2&\oplus&\CK_3^{-1}\CK_2 \\
\oplus&\CK_1^{-1}\CK_3&\oplus&\CK_2^{-1}\CK_3&\oplus&\CK_3^{-1}\CK_3,
\end{matrix}\]
the matrix elements of $A$ (resp.\ $B$) are
\[\begin{matrix}
\begin{pmatrix}
0&0&0 \\
-t_1t_3 & -t_2t_3 &-t_3t_3 \\
t_1t_2 & t_2t_2 & t_3t_2
\end{pmatrix} 
&{\rm resp.}&
\begin{pmatrix}
-s_1s_2&-s_2s_2&-s_3s_2 \\
s_1s_1&s_1s_2& s_1s_3 \\
0&0&0
\end{pmatrix} 
\end{matrix}\]
(notation of~(\ref{LVL})).

Hence the first and the third rows do not contribute to the differential $d_2$ of the second
page of the hypercohomology spectral sequence, and this differential equals the one for a
simpler complex
\begin{equation}
  \label{twelve}
  \CL_j^2\CK_2\CK_3\xrightarrow{(-t_1t_3,-t_2t_3,-t_3t_3)}\CK_1^{-1}\CK_2\oplus\CO_E\oplus\CK_3^{-1}\CK_2
  \xrightarrow{(s_1s_1,s_1s_2,s_1s_3)}\CL_i^{-2}\CK_1\CK_2.
  \end{equation}
In particular, the image of $d_2$ is always divisible by $s_1$.

This implies $\{w_{i,r},w_{j,r'}\}_{FO}=\{w_{i,r},y_{j,r'}\}_{FO}=\{y_{i,r},w_{j,r'}\}_{FO}=0$ for any $r,r'$.

\subsubsection{Type $A_1$ revisited}
In order to compute $\{y_{i,r},y_{j,r'}\}_{FO}$, we need some preparation on the tangent bundle
of the Levi Feigin-Odesskii moduli space $\oM_{\{j\}}(\Ind_T^{L_{\{j\}}}\CK_T,\CL_T)$.

Recall from~\S\ref{prf sl2} that
\[T_{\varPi^M_{\{j\}}\varphi}\oM_{\{j\}}(\Ind_T^{L_{\{j\}}}\CK_T,\CL_T)=
\on{Ker}\big(H^0(E,\CL_j^{-2}\CK_2^{-1}\CK_3^{-1})\to H^1(E,\CE nd(\CL_j^{-1}))\big)/\BC t_2t_3,\]
\[T^*_{\varPi^M_{\{j\}}\varphi}\oM_{\{j\}}(\Ind_T^{L_{\{j\}}}\CK_T,\CL_T)=
\big( (\BC t_2t_3)^\perp \subset H^1(E,\CL_j^{2}\CK_2\CK_3)\big)/  H^0(E,\CE nd(\CL_j^{-1})).\]

Splitting \eqref{quatro} can be rewritten as follows:
\begin{equation}
  \label{eq:tangent_splitting}
  \begin{array}{ccc}
   H^0(E,\CL_j^{-1}\CK_2^{-1})/\BC t_2\oplus H^0(E,\CL_j^{-1}\CK_3^{-1})/\BC t_3&\iso&
  T_{\varPi^M_{\{j\}}\varphi}\oM_{\{j\}}(\Ind_T^{L_{\{j\}}}\CK_T,\CL_T), \\
    (\varpi,\varrho)&\longmapsto& t_3\varrho-t_2\varpi.
  \end{array}
\end{equation}

Applying Serre duality to the splitting~\eqref{eq:tangent_splitting} of
$T_{\varPi^M_{\{j\}}\varphi}\oM_{\{j\}}(\Ind_T^{L_{\{j\}}}\CK_T,\CL_T)$, we obtain the following splitting of
$T^*_{\varPi^M_{\{j\}}\varphi}\oM_{\{j\}}(\Ind_T^{L_{\{j\}}}\CK_T,\CL_T)$:
\begin{equation}\label{eq:cotangent_splitting_1}
\hspace{-1cm}
\begin{array}{ccc}
	T^*_{\varPi^M_{\{j\}}\varphi}\oM_{\{j\}}(\Ind_T^{L_{\{j\}}}\CK_T,\CL_T) & \iso & \Big( (\BC t_2)^\perp \subset H^1(E,\CL_j \CK_2)) \Big) \oplus \Big( (\BC t_3)^\perp \subset H^1(E,\CL_j \CK_3)) \Big), \\
	\upsilon &\longmapsto & (\upsilon t_3, -\upsilon t_2).
\end{array}
\end{equation}

It will be useful to rewrite the first summand of the splitting~\eqref{eq:cotangent_splitting_1} as
\[\on{Ker}\Big(H^0\left(D_{t_3},\CL_j\CK_2(D_{t_3})|_{D_{t_3}}\right)
\xrightarrow{\on{Res}(t_2|_{D_{t_3}}\cdot?)}H^1(E,\CO_E)\Big).\]
This is done by dualizing the first summand of \eqref{eq:tangent_splitting}, using the pairing
between $H^0(E,\CL_j^{-1} \CK_2^{-1})$ and $H^0(D_{t_3},\CL_j \CK_2(D_{t_3})|_{D_{t_3}})$ given by the
sum of residues of the product 
(as always, we use the trivialization of $\bomega_E$ in~Remark~\ref{triv can}).
The identification
\begin{multline*}
\hspace{-0.5cm}
  \on{Ker}\Big(H^0\left(D_{t_3},\CL_j\CK_2(D_{t_3})|_{D_{t_3}}\right)
  \xrightarrow{\on{Res}(t_2|_{D_{t_3}}\cdot?)}H^1(E,\CO_E)\Big)\iso
  \Big( (\BC t_2)^\perp \subset H^1(E,\CL_j\CK_2)\Big)
\end{multline*}
is induced by the connecting homomorphism for the short exact sequence
\begin{equation}
  \label{thirteen}
  0\to\CL_j\CK_2\xrightarrow{t_3}\CL_j\CK_2(D_{t_3})\to\CL_j\CK_2(D_{t_3})|_{D_{t_3}}\to0.
  \end{equation}

\subsubsection{Brackets of $y$-coordinates: \v{C}ech cocycles}
In order to compute $\{y_{i,r},y_{j,r'}\}_{FO}$, we need to compute the composition
\begin{multline*}
  \on{Ker}\Big(H^0\left(D_{t_3},\CO_E(D_{t_3})|_{D_{t_3}}\right)
    \xrightarrow{\on{Res}(\frac{t_2}{u_j}|_{D_{t_3}}\cdot?)}H^1(E,\CO_E)\Big)\\
    \xrightarrow{\frac{1}{u_j}|_{D_{t_3}}}
   \on{Ker}\Big(H^0\left(D_{t_3},\CL_j\CK_2(D_{t_3})|_{D_{t_3}}\right)
  \xrightarrow{\on{Res}(t_2|_{D_{t_3}}\cdot?)}H^1(E,\CO_E)\Big)\\
  \to\on{Ker}\big(H^1(E,\CL_j^2\CK_2\CK_3)\to H^1(E,\CO_E)\big)\xrightarrow{d_2}
   H^0(E,\CL_i^{-2}\CK_1\CK_2)/\BC s_1s_2\\
  \to H^0(E,\CL_i^{-1}\CK_2)/\BC s_2\xrightarrow{\frac{1}{u_i}|_{D_{s_1}}}
  H^0(D_{s_1},\CO_{D_{s_1}})\big/\BC\frac{s_2}{u_j}\Big|_{D_{s_1}},
   \end{multline*}
where $u_i,u_j$ were defined in~\S\ref{coordi}, and $d_2$ comes from~(\ref{twelve}).

We rewrite the above composition as follows:
\begin{multline}
  \label{fourteen}
  \on{Ker}\Big(H^0\left(D_{t_3},\CO_E(D_{t_3})|_{D_{t_3}}\right)
    \xrightarrow{\on{Res}(\frac{t_2}{u_j}|_{D_{t_3}}\cdot?)}H^1(E,\CO_E)\Big)\\
    \xrightarrow{\frac{1}{u_j}|_{D_{t_3}}}
   \on{Ker}\Big(H^0\left(D_{t_3},\CL_j\CK_2(D_{t_3})|_{D_{t_3}}\right)
  \xrightarrow{\on{Res}(t_2|_{D_{t_3}}\cdot?)}H^1(E,\CO_E)\Big)\\
  \to\on{Ker}\big(H^1(E,\CL_j\CK_2)\to H^1(E,\CO_E)\big)\xrightarrow{d_2}
   H^0(E,\CL_i^{-2}\CK_1\CK_2)/\BC s_1s_2\\
  \to H^0(E,\CL_i^{-1}\CK_2)/\BC s_2\xrightarrow{\frac{1}{u_i}|_{D_{s_1}}}
  H^0(D_{s_1},\CO_{D_{s_1}})\big/\BC\frac{s_2}{u_j}\Big|_{D_{s_1}},
\end{multline}
where the second arrow is the connecting homomorphism coming from~(\ref{thirteen}),
and $d_2$ is the differential in the hypercohomology spectral sequence of the complex
\begin{equation}
  \label{fifteen}
  \CL_j\CK_2\xrightarrow{(-t_1,-t_2,-t_3)}\CK_1^{-1}\CK_2\oplus\CO_E\oplus\CK_3^{-1}\CK_2
  \xrightarrow{(s_1s_1,s_1s_2,s_1s_3)}\CL_i^{-2}\CK_1\CK_2.
\end{equation}

To perform computations with the first cohomology we introduce a \v{C}ech cover of $E$ by
two opens $U_{t_2}:=E\setminus D_{t_2}$ and $U_{t_3}:=E\setminus D_{t_3}$. We represent
$dy_{j,r},\ 1\leq r<a_j$, as
the element of $H^0(D_{t_3},\CO_E(D_{t_3})|_{D_{t_3}})$ given by the principal part of
\[\frac{1}{x-w_{j,r}}\Big|_{w_{j,r}}-\left(\frac{t_2}{u_j}\Big|_{w_{j,r}}\right)
\left(\frac{t_2}{u_j}\Big|_{w_{j,a_j}}\right)^{-1}\frac{1}{x-w_{j,a_j}}\Big|_{w_{j,a_j}}.\]
Then the corresponding 1-cocycle in $H^1(E,\CL_j\CK_2)$ is given by a section
$f\in H^0(U_{t_2}\cap U_{t_3},\CL_j\CK_2)$ having simple poles at points of $D_{t_3}$ (and perhaps
some other poles at $D_{t_2}$ that we do not care about) such that the principal part of $f$
at $w_{j,r}$ is $\frac{1}{u_j}|_{w_{j,r}}\frac{1}{x-w_{j,r}}$ and the principal part of $f$
  at $w_{j,a_j}$ is $-\left(\frac{t_2}{u_j}\Big|_{w_{j,r}}\right)
  \left(t_2\Big|_{w_{j,a_j}}\right)^{-1}\frac{1}{x-w_{j,a_j}}$, while the principal parts
  of $f$ at $w_{j,r'}$ for $r\ne r'\ne a_j$ vanish. Furthermore, we apply the left morphism
  in~(\ref{fifteen}) to the above 1-cocycle to obtain a 1-cocycle
  $(g_1,g_2,g_3)\in H^1(E,\CK_1^{-1}\CK_2\oplus\CO_E\oplus\CK_3^{-1}\CK_2)$, where
  $g_1=-t_1f,\ g_2=-t_2f,\ g_3=-t_3f$. Then $g_3$ has no poles at $D_{t_3}$, and
  $g_2$ has the principal part $\frac{-t_2}{u_j}|_{w_{j,r}}\frac{1}{x-w_{j,r}}$ at $w_{j,r}$,
  and the principal part $\left(\frac{t_2}{u_j}\Big|_{w_{j,r}}\right)\frac{1}{x-w_{j,a_j}}$
  at $w_{j,a_j}$,
  while the principal parts of $g_2$ at $w_{j,r'}$ for $r\ne r'\ne a_j$ vanish.

\subsubsection{Brackets of $y$-coordinates: Weierstra\ss\ $\zeta$-function} 
Below we write formulas in terms of the Weierstra\ss\ zeta function $\zeta(x)$
(see e.g.~\cite[Appendix A]{p}) which is defined on the uniformization of $E$. However, the linear
combinations we consider descend to rational functions on $E$.
In particular, the function
\[\Theta_{w_{j,r},w_{j,a_j}}(x):=\zeta(x-w_{j,r})-\zeta(x-w_{j,a_j})\]
on $E$ is a rational function with a simple pole at $w_{j,r}$ with residue 1 and a simple pole at
$w_{j,a_j}$ with residue $-1$, regular away from $w_{j,r},w_{j,a_j}$.

Using this function we can express the 1-cocycle $(g_1,g_2,g_3)$ as a coboundary
$(g'_1,g'_2,g'_3)-(g''_1,g''_2,g''_3)$ where
$(g'_1,g'_2,g'_3)\in H^0(U_{t_3},\CK_1^{-1}\CK_2\oplus\CO_E\oplus\CK_3^{-1}\CK_2)$ and
$(g''_1,g''_2,g''_3)\in H^0(U_{t_2},\CK_1^{-1}\CK_2\oplus\CO_E\oplus\CK_3^{-1}\CK_2)$.
In particular, we have
\[g'_3=0,\ g'_2=\left(\frac{t_2}{u_j}\Big|_{w_{j,r}}\right)\Theta_{w_{j,r},w_{j,a_j}}.\]
Furthermore, by definition of $d_2$ in~(\ref{fourteen}), we have
\[d_2(f)=s_1^2g'_1+s_1s_2g'_2+0\pmod{s_1s_2}\]
(note that $d_2(f)$ is actually a regular section of $\CL_i^{-2}\CK_1\CK_2$ since
$s_1t_1+s_2t_2=-s_3t_3$). Hence we have
\[d_2(f)=s_1^2g'_1-s_1s_2\left(\frac{t_2}{u_j}\Big|_{w_{j,r}}\right)\Theta_{w_{j,r},w_{j,a_j}}\pmod{s_1s_2}.\]
The composition with the last two arrows in~(\ref{fourteen}) annihilates the summand $s_1^2g'_1$,
and we are left with
\[-\left(\frac{t_2}{u_j}\Big|_{w_{j,r}}\right)\sum_{r'=1}^{a_i}\left(\frac{s_2}{u_i}\Big|_{w_{i,r'}}\right)
\Theta_{w_{j,r},w_{j,a_j}}(w_{i,r'}).\]
Pairing this expression with $dy_{i,r'}$ we finally arrive at
\begin{equation}
  \label{yy}
  \{y_{j,r},y_{i,r'}\}_{FO}=
  -y_{j,r}y_{i,r'}\big(\Theta_{w_{j,r},w_{j,a_j}}(w_{i,r'})-\Theta_{w_{j,r},w_{j,a_j}}(w_{i,a_i})\big).
\end{equation}
To be more precise, recall that our coordinates include $w_{i,1},\ldots,w_{i,a_i-1}$, but not
$w_{i,a_i}$. However, $w_{i,a_i}$ can be determined from $w_{i,1},\ldots,w_{i,a_i-1}$ and the
constraint that $\sum_{r'=1}^{a_i}w_{i,r'}$ is fixed in $E$. The same applies to $w_{j,a_j}$.
Now, instead of normalizing the $y$-coordinates by fixing their sum, let us view them as
homogeneous coordinates, so that only their ratios matter.
From~(\ref{yy}) one can deduce
\begin{equation}
  \label{yyyy}
  \Big\{\frac{y_{i,r'}}{y_{i,p'}},\frac{y_{j,r}}{y_{j,p}}\Big\}_{FO}=
  \frac{y_{i,r'}}{y_{i,p'}}\cdot\frac{y_{j,r}}{y_{j,p}}
  \big(\zeta(w_{i,r'}-w_{j,r})-\zeta(w_{i,r'}-w_{j,p})-\zeta(w_{i,p'}-w_{j,r})+\zeta(w_{i,p'}-w_{j,p})\big).
\end{equation}
  
\subsubsection{Comparison with the reduced Coulomb zastava}
To compare the bracket $\{,\}_{\CK'}$ on the reduced Coulomb zastava $^C_\CalD\uoZ^\alpha_{\CK'}$
with the Feigin-Odesskii bracket we write down the isomorphism of~Theorem~\ref{reductio} 
explicitly in coordinates. To this end we envoke the uniformization
$\fP\colon\BC\to E=\BC/(\BZ\oplus\BZ\tau)$. We denote by $w$ the coordinate on $\BC$ such that
the trivialization of $\bomega_E$ given by $dw$ coincides with the one
of~Remark~\ref{triv can}. We denote by $\theta(w)$ the theta-function
of degree~1 for the lattice $\BZ\oplus\BZ\tau$ such that $\theta(0)=0$. We use the standard
trivialization of the pullback $\fP^*\CalD_j$ such that $\prod_{r=1}^{a_j}\theta(w-w_r)$ descends
to a section of $\CalD_j$ whenever $\CO_E\big(\sum_{r=1}^{a_j}\fP(w_i)\big)\simeq\CalD_j$.

The common part of the \'etale coordinate systems on $^C\!\oZ^\alpha_{\CK'}$ and $\oZ^\alpha_\CK$
is formed by $(w_{i,r'},w_{j,r})_{r'=1,\ldots,a_i}^{r=1,\ldots,a_j}$ (we now think of them as of points
in $\BC$ rather than their images in $E$). The additional coordinates on $\oZ^\alpha_\CK$
are $(\sy_{i,r'},\sy_{j,r})_{r'=1,\ldots,a_i}^{r=1,\ldots,a_j}$, where
$\sy_{i,r'}\in\CK_i|_{w_{i,r'}},\ \sy_{j,r}\in\CK_j|_{w_{j,r}}$, and
$\CK_i=\CK^{-\alpha_i^\svee},\ \CK_j=\CK^{-\alpha_j^\svee}$.
The additional coordinates on $^C\!\oZ^\alpha_\CK$ are $(z_{i,r'},z_{j,r})_{r'=1,\ldots,a_i}^{r=1,\ldots,a_j}$,
where $z_{i,r'}\in\CK'_i|_{w_{i,r'}},\ z_{j,r}\in\CK'_j|_{w_{j,r}}$, and
$\CK'_i=\CK^{-\alpha_i^\svee}\CalD_i\CalD_j^{-1},\ \CK_j=\CK^{-\alpha_j^\svee}\CalD_j$.

On the reduced zastava the $w$-variables are constrained to have a fixed sum, while the
$\sy$-variables (resp.\ $z$-variables) are homogeneous, i.e.\ only their ratios are well defined.
The isomorphism of~Theorem~\ref{reductio} has form
\begin{equation}
  \label{dozen}
\sy_{i,r'}=z_{i,r'}\phi_{i,r'}(w_{i,1},\ldots,w_{i,a_i})\psi(w_{i,r'};w_{j,1},\ldots,w_{j,a_j}),\
\sy_{j,r}=z_{j,r}\phi_{j,r}(w_{j,1},\ldots,w_{j,a_j}),
\end{equation}
where $\psi(w_{i,r'};w_{j,1},\ldots,w_{j,a_j})$ descends to a section of $\CalD_j$ (unique up to
rescaling) that vanishes at all the points $w_{j,1},\ldots,w_{j,a_j}$. Note that rescaling
$\psi(w_{i,r'};w_{j,1},\ldots,w_{j,a_j})$ does not change the ratios $\sy_{i,q}/\sy_{i,p}$, so
the above transformation is well defined. The exact definition of $\phi_{i,r'},\phi_{j,r}$ is not
important for our purposes (we observe only that $\phi_{i,r'}$ is a nonzero element of
$\CalD_i^{-1}|_{w_{i,r'}}$). Thus we can take
\[\psi(w_{i,r'};w_{j,1},\ldots,w_{j,a_j})=\prod_{r=1}^{a_j}\theta(w_{i,r'}-w_{j,r}).\]

Now recall the coordinates $y_{i,r'}$ of~\S\ref{coor}. They depend on a choice of a trivialization
$u$ of $\CK_i\CalD_i$ and are defined as $y_{i,r'}=\frac{t}{u}|_{w_{i,r'}}$ (recall that $t$ is also
a section of $\CK_i\CalD_i$). On the other hand, $\sy_{i,r'}=\Res_{w_{i,r'}}\frac{t}{s}$, where
$s$ is a section of $\CalD_i$ with zeros $w_{i,1},\ldots,w_{i,a_i}$, see~(\ref{1:Res}). Hence
\begin{equation}
  \label{17}
  \sy_{i,r'}=y_{i,r'}\cdot\Res_{w_{i,r'}}\frac{u}{s}
\end{equation}
(where we use the trivialization of $\bomega_E$, see~Remark~\ref{triv can}).
Using the uniformization $\fP\colon\BC\to E$ and trivializing $\fP^*\CalD_i$
we can view $u$ as a trivialization of $\CK_i$. Then we can write
$s(w)=\prod_{r'=1}^{a_i}\theta(w-w_{i,r'})$, so that~(\ref{17}) becomes
\[\sy_{i,r'}=y_{i,r'}\cdot\frac{u(w_{i,r'})}{\theta'(0)\prod_{p\ne r'}\theta(w_{i,r'}-w_{i,p})}.\]
Thus viewing $u$ as a trivialization of $\CK_i$ and combining this with our trivialization of
$\fP^*\CalD_i$ we can view $z_{i,r'}$ as actual coordinates taking values in $\BC$,
and from~(\ref{dozen}) we get
\[y_{i,r'}=z_{i,r'}\phi'_{i,r'}(w_{i,1},\ldots,w_{i,a_i})\prod_{r=1}^{a_j}\theta(w_{i,r'}-w_{j,r}),\
y_{j,r}=z_{j,r}\phi'_{j,r}(w_{j,1},\ldots,w_{j,a_j}),\]
where once again, the exact form of $\phi'_{i,r'},\phi'_{j,r}$ is not important for our purposes.

We get \[\{y_{i,r'},y_{j,r}\}_{\CK'}=
y_{i,r'}y_{j,r}\cdot\frac{\partial_{w_{j,r}}\psi(w_{i,r'};w_{j,1},\ldots,w_{j,a_j})}
{\psi(w_{i,r'};w_{j,1},\ldots,w_{j,a_j})}=y_{i,r'}y_{j,r}\cdot\zeta(w_{i,r'}-w_{j,r}).\]
This in turn implies
\[\Big\{\frac{y_{i,r'}}{y_{i,p'}},\frac{y_{j,r}}{y_{j,p}}\Big\}_{\CK'}=
\frac{y_{i,r'}}{y_{i,p'}}\cdot\frac{y_{j,r}}{y_{j,p}}
\big(\zeta(w_{i,r'}-w_{j,r})-\zeta(w_{i,r'}-w_{j,p})-\zeta(w_{i,p'}-w_{j,r})+\zeta(w_{i,p'}-w_{j,p})\big).\]
Comparing with~(\ref{yyyy}) we see that the brackets $\{,\}_{\CK'}$ and $\{,\}_{FO}$ match
on $y$-coordinates. It is easy to check that they also match on the brackets involving
$w$-coordinates.

This completes the proof of Theorem~\ref{myksas} for $G=\SL(3)$.

\subsection{Proof of Theorem~\ref{myksas} for arbitrary simply laced $G$}
The \'etale local coordinates on $^{\vphantom{\alpha}}_\CalD\uoZ^\alpha_\CK$ are
$(w_{i,r},y_{i,r})_{i\in I}^{1\leq r\leq a_i}$ (as always, $w$-coordinates are constrained, and
$y$-coordinates are homogeneous). We have to compare $\{f,g\}_{FO}$ and $\{f,g\}_{\CK'}$,
where $f$ is a coordinate function from the $i$-th group, and $g$ is a coordinate function
from the $j$-th group (it may happen that $i=j$). We consider the Levi subgroup of rank 1 or 2
corresponding to the Dynkin subdiagram on vertices $i,j$. The rational projection $\varPi$ to
the corresponding Levi zastava spaces being Poisson, it suffices to compare the brackets in
question for the Levi zastava spaces. This comparison was already made in~\S\ref{prf sl2}
for rank 1 and in~\S\ref{prf sl3} for rank 2.

This completes the proof of Theorem~\ref{myksas} for arbitrary simply laced $G$. \hfill $\Box$

\end{document}